\newtheorem{thm}{Theorem} [section]
\newtheorem{lemma}[thm]{Lemma}
\newtheorem{corollary}[thm]{Corollary}
\newtheorem{prop}[thm]{Proposition}
\newtheorem{notation}[thm]{Notation}
\newtheorem*{rough-thm-1}{Rough Version of Vanishing Theorem}
\newtheorem*{rough-thm-2}{Rough Version of Exactness Theorem}
\newtheorem*{main example}{Main Example}
\theoremstyle{definition}
\newtheorem*{basic convention}{Basic Conventions}
\newtheorem{defn}[thm]{Definition}
\newtheorem{convention}[thm]{Convention}
\theoremstyle{remark}
\newtheorem{remark}[thm]{Remark}
\newtheorem{fact}[thm]{Fact}
\begin{document}

\numberwithin{equation}{section}

\newcommand{\hs}{\mbox{\hspace{.4em}}}
\newcommand{\ds}{\displaystyle}
\newcommand{\bd}{\begin{displaymath}}
\newcommand{\ed}{\end{displaymath}}
\newcommand{\bcd}{\begin{CD}}
\newcommand{\ecd}{\end{CD}}

\newcommand{\proj}{\operatorname{Proj}}
\newcommand{\bproj}{\underline{\operatorname{Proj}}}
\newcommand{\spec}{\operatorname{Spec}}
\newcommand{\bspec}{\underline{\operatorname{Spec}}}
\newcommand{\pline}{{\mathbf P} ^1}
\newcommand{\pplane}{{\mathbf P}^2}
\newcommand{\coker}{{\operatorname{coker}}}
\newcommand{\ldb}{[[}
\newcommand{\rdb}{]]}

\newcommand{\Sym}{\operatorname{Sym}^{\bullet}}
\newcommand{\Symp}{\operatorname{Sym}}
\newcommand{\Pic}{\operatorname{Pic}}
\newcommand{\AAut}{\operatorname{Aut}}
\newcommand{\PAut}{\operatorname{PAut}}

\newcommand{\too}{\twoheadrightarrow}
\newcommand{\C}{{\mathbf C}}
\newcommand{\cA}{{\mathcal A}}
\newcommand{\cS}{{\mathcal S}}
\newcommand{\cV}{{\mathcal V}}
\newcommand{\cM}{{\mathcal M}}
\newcommand{\bA}{{\mathbf A}}
\newcommand{\aline}{\mathbb{A}^1}
\newcommand{\cB}{{\mathcal B}}
\newcommand{\cC}{{\mathcal C}}
\newcommand{\cD}{{\mathcal D}}
\newcommand{\D}{{\mathcal D}}
\newcommand{\cs}{{\mathbf C} ^*}
\newcommand{\boldc}{{\mathbf C}}
\newcommand{\cE}{{\mathcal E}}
\newcommand{\cF}{{\mathcal F}}
\newcommand{\cG}{{\mathcal G}}
\newcommand{\G}{{\mathbf G}}
\newcommand{\fg}{{\mathfrak g}}
\newcommand{\ft}{\mathfrak t}
\newcommand{\bH}{{\mathbf H}}
\newcommand{\cH}{{\mathcal H}}
\newcommand{\cI}{{\mathcal I}}
\newcommand{\cJ}{{\mathcal J}}
\newcommand{\cK}{{\mathcal K}}
\newcommand{\cL}{{\mathcal L}}
\newcommand{\baL}{{\overline{\mathcal L}}}
\newcommand{\M}{{\mathcal M}}
\newcommand{\bM}{{\mathbf M}}
\newcommand{\bm}{{\mathbf m}}
\newcommand{\cN}{{\mathcal N}}
\newcommand{\theo}{\mathcal{O}}
\newcommand{\cP}{{\mathcal P}}
\newcommand{\cR}{{\mathcal R}}
\newcommand{\boldp}{{\mathbf P}}
\newcommand{\boldq}{{\mathbf Q}}
\newcommand{\bbL}{{\mathbf L}}
\newcommand{\cQ}{{\mathcal Q}}
\newcommand{\cO}{{\mathcal O}}
\newcommand{\Oo}{{\mathcal O}}
\newcommand{\OX}{{\Oo_X}}
\newcommand{\OY}{{\Oo_Y}}
\newcommand{\otY}{{\underset{\OY}{\ot}}}
\newcommand{\otX}{{\underset{\OX}{\ot}}}
\newcommand{\cU}{{\mathcal U}}
\newcommand{\cX}{{\mathcal X}}
\newcommand{\cW}{{\mathcal W}}
\newcommand{\boldz}{{\mathbf Z}}
\newcommand{\cZ}{{\mathcal Z}}
\newcommand{\qgr}{\operatorname{qgr}}
\newcommand{\gr}{\operatorname{gr}}
\newcommand{\coh}{\operatorname{coh}}
\newcommand{\End}{\operatorname{End}}
\newcommand{\Hom}{\operatorname{Hom}}
\newcommand{\uHom}{\underline{\operatorname{Hom}}}
\newcommand{\uHomY}{\uHom_{\OY}}
\newcommand{\uHomX}{\uHom_{\OX}}
\newcommand{\Ext}{\operatorname{Ext}}
\newcommand{\bExt}{\operatorname{\bf{Ext}}}
\newcommand{\Tor}{\operatorname{Tor}}

\newcommand{\inv}{^{-1}}
\newcommand{\airtilde}{\widetilde{\hspace{.5em}}}
\newcommand{\airhat}{\widehat{\hspace{.5em}}}
\newcommand{\nt}{^{\circ}}
\newcommand{\del}{\partial}

\newcommand{\supp}{\operatorname{supp}}
\newcommand{\GK}{\operatorname{GK-dim}}
\newcommand{\hd}{\operatorname{hd}}
\newcommand{\id}{\operatorname{id}}
\newcommand{\res}{\operatorname{res}}
\newcommand{\lrar}{\leadsto}
\newcommand{\im}{\operatorname{Im}}
\newcommand{\HH}{\operatorname{H}}
\newcommand{\TF}{\operatorname{TF}}
\newcommand{\Bun}{\operatorname{Bun}}
\newcommand{\Hilb}{\operatorname{Hilb}}
\newcommand{\Fact}{\operatorname{Fact}}
\newcommand{\F}{\mathcal{F}}
\newcommand{\nthord}{^{(n)}}
\newcommand{\Aut}{\underline{\operatorname{Aut}}}
\newcommand{\Gr}{\operatorname{Gr}}
\newcommand{\Fr}{\operatorname{Fr}}
\newcommand{\GL}{\operatorname{GL}}
\newcommand{\gl}{\mathfrak{gl}}
\newcommand{\SL}{\operatorname{SL}}
\newcommand{\ff}{\footnote}
\newcommand{\ot}{\otimes}
\def\Ext{\operatorname {Ext}}
\def\Hom{\operatorname {Hom}}
\def\Ind{\operatorname {Ind}}
\def\bbZ{{\mathbb Z}}

\newcommand{\nc}{\newcommand}
\newcommand{\on}{\operatorname}
\nc{\cont}{\on{cont}}
\nc{\rmod}{\on{mod}}
\nc{\Mtil}{\widetilde{M}}
\nc{\wb}{\overline}
\nc{\wt}{\widetilde}
\nc{\wh}{\widehat}
\nc{\sm}{\setminus}
\nc{\mc}{\mathcal}
\nc{\mbb}{\mathbb}
\nc{\Mbar}{\wb{M}}
\nc{\Nbar}{\wb{N}}
\nc{\Mhat}{\wh{M}}
\nc{\pihat}{\wh{\pi}}
\nc{\JYX}{\cJ_{Y\leftarrow X}}
\nc{\phitil}{\wt{\phi}}
\nc{\Qbar}{\wb{Q}}
\nc{\DYX}{\D_{Y\leftarrow X}}
\nc{\DXY}{\D_{X\to Y}}
\nc{\dR}{\stackrel{\bbL}{\underset{\D_X}{\ot}}}
\nc{\Winfi}{\cW_{1+\infty}}
\nc{\K}{{\mc K}}
\nc{\unit}{{\bf \on{unit}}}
\nc{\boxt}{\boxtimes}
\nc{\xarr}{\stackrel{\rightarrow}{x}}
\nc{\Cnatbar}{\overline{C}^{\natural}}
\nc{\oJac}{\overline{\on{Jac}}}
\nc{\gm}{{\mathbf G}_m}
\nc{\Loc}{\on{Loc}}
\nc{\Bm}{\operatorname{Bimod}}
\nc{\lie}{{\mathfrak g}}
\nc{\lb}{{\mathfrak b}}
\nc{\lien}{{\mathfrak n}}
\nc{\e}{\epsilon}
\nc{\eu}{\mathsf{eu}}

\nc{\Gm}{{\mathbb G}_m}
\nc{\Gabar}{\wb{\G}_a}
\nc{\Gmbar}{\wb{\G}_m}
\nc{\PD}{{\mathbb P}_{\D}}
\nc{\Pbul}{P_{\bullet}}
\nc{\PDl}{{\mathbb P}_{\D(\lambda)}}
\nc{\PLoc}{\mathsf{MLoc}}
\nc{\Tors}{\on{Tors}}
\nc{\PS}{{\mathsf{PS}}}
\nc{\PB}{{\mathsf{MB}}}
\nc{\Pb}{{\underline{\operatorname{MBun}}}}
\nc{\Ht}{\mathsf{H}}
\nc{\bbH}{\mathbb H}
\nc{\gen}{^\circ}
\nc{\Jac}{\operatorname{Jac}}
\nc{\sP}{\mathsf{P}}
\nc{\sT}{\mathsf{T}}
\nc{\bP}{{\mathbb P}}
\nc{\otc}{^{\otimes c}}
\nc{\Det}{\mathsf{det}}
\nc{\PL}{\on{ML}}
\nc{\sL}{\mathsf{L}}

\nc{\ml}{{\mathcal S}}
\nc{\Xc}{X_{\on{con}}}
\nc{\Z}{{\mathbb Z}}
\nc{\resol}{\mathfrak{X}}
\nc{\map}{\mathsf{f}}
\nc{\gK}{\mathbb{K}}
\nc{\bigvar}{\mathsf{W}}
\nc{\Tmax}{\mathsf{T}^{md}}

\nc{\Cpt}{\mathbb{P}}
\nc{\pv}{e}

\nc{\Qgtr}{Q^{\on{gtr}}}
\nc{\algtr}{\alpha^{\on{gtr}}}
\nc{\Ggtr}{\mathbb{G}^{\on{gtr}}}
\nc{\chigtr}{\chi^{\on{gtr}}}

\newcommand{\la}{\langle}
\newcommand{\ra}{\rangle}
\newcommand{\fm}{\mathfrak m}
\newcommand{\Ms}{\mathfrak M}
\newcommand{\Ma}{\mathfrak M_0}
\newcommand{\ML}{\mathfrak L}
\newcommand{\ev}{\textit{ev}}

\nc{\Higgst}{\mathcal{M}_{\on{Higgs}}^{\on{ss}}(G,C)}
\nc{\Higgsp}{M_{\on{Higgs}}^{\on{ss}}(G,C)}
\nc{\hGm}{\widehat{\mathbb{G}}_m}

\nc{\spin}{\mathsf{K}^{1/2}}
\nc{\MHiggs}{\mathcal{M}_{\on{Higgs}}(G,C)}

\nc{\fixed}[1]{\big(#1\big)_{\on{fixed}}}

\numberwithin{equation}{section}

\title{On the Kirwan Map for Moduli of Higgs Bundles}

\author{Emily Cliff}
\address{Department of Mathematics\\University of Illinois at Urbana-Champaign\\Urbana, IL 61801 USA}
\email{ecliff@illinois.edu}
\author{Thomas Nevins}
\address{Department of Mathematics\\University of Illinois at Urbana-Champaign\\Urbana, IL 61801 USA}
\email{nevins@illinois.edu}
\author{Shiyu Shen}
\address{Department of Mathematics\\University of Illinois at Urbana-Champaign\\Urbana, IL 61801 USA}
\email{sshen16@illinois.edu}

%\date{\today}

\begin{abstract}
Let $C$ be a smooth complex projective curve and $G$ a connected complex reductive group.  We prove that if the center $Z(G)$ of $G$ is disconnected, then the Kirwan map 
$H^*\big(\Bun(G,C),\mathbb{Q}\big)\rightarrow H^*\big(\Higgst,\mathbb{Q}\big)$ from the cohomology of the moduli stack of $G$-bundles to the moduli stack of semistable $G$-Higgs bundles, fails to be surjective: more precisely, the {\em variant cohomology} (and variant intersection cohomology) of $\Higgst$ is always nontrivial.  We also show that the image of the pullback map $H^*\big(\Higgsp,\mathbb{Q}\big)\rightarrow H^*\big(\Higgst,\mathbb{Q}\big)$, from the cohomology of the moduli space of semistable $G$-Higgs bundles to the stack, cannot be contained in the image of the Kirwan map.  The proof uses  a Borel-Quillen--style localization result for equivariant cohomology of stacks to reduce to an explicit construction and calculation.
\end{abstract}

\maketitle

\section{Introduction}
Let $C$ be a nonsingular complex projective curve of genus $g\geq 2$.  Let $G$ be a connected complex reductive group.  Associated to $C$ and $G$ there are:
\begin{itemize}
\item a moduli stack $\Higgst$ of semistable $G$-Higgs bundles on $C$.
\item a coarse moduli space $\Higgsp$ of semistable $G$-Higgs bundles on $C$ with a morphism $\Higgst\xrightarrow{p}\Higgsp$.
\end{itemize}
The connected components of the moduli stack $\Bun(G,C)$ and the stack $\Higgst$ and space $\Higgsp$ are indexed by the abelian group $\pi_1(G)$ (cf. \cite{GO} for the latter).

Let $Z=Z(G)$ denote the center of $G$.   The group $Z$ can be split as a product, $Z\cong Z^\circ\times\pi_0(Z)$, where the identity component of $Z$ is $Z^\circ$ and the component group is $\pi_0(Z)$.  The abelian group stack $\Bun(Z,C)$, the moduli stack of principal $Z$-bundles, acts on $\Higgst$, inducing an action of the coarse moduli space $\underline{\on{Bun}}(Z,C)$ on $\Higgsp$.  
The fundamental group $\pi_1(G)$ contains a finite-index subgroup, the image of $\iota: \pi_1(Z^\circ)\rightarrow \pi_1(G)$ (cf. \eqref{components subgroup} and Proposition \ref{how components behave}). 
\subsection{Kirwan Map}
For each $\eta\in\pi_1(G)$  write $\Bun(G,C)_\eta$, $\Higgst_\eta$, $\Higgsp_\eta$ for the corresponding components.  
Applying singular cohomology with $\mathbb{Q}$-coefficients, the maps 
\bd
\xymatrix{\Bun(G,C)_\eta & \Higgst_\eta \ar[l]_{k_\eta} \ar[r]^{p_\eta} & \Higgsp_\eta} \hspace{2em} \text{induce a diagram}
\ed
\bd
\xymatrix{
H^*\big(\Bun(G,C)_\eta\big) \ar[r]^{\kappa_\eta = k_\eta^*} & H^*\big(\Higgst_\eta\big) & H^*\big(\Higgsp_\eta\big). \ar[l]_{p_\eta^*} 
}
\ed
The left-hand map $\kappa_\eta$ is the {\em Kirwan map} for the moduli stack of semistable $G$-Higgs bundles.  The cohomology $H^*(\Bun(G,C))$ is known to be a free graded-commutative algebra (i.e., a tensor product of a symmetric algebra on generators in even degrees and an exterior algebra on generators in odd degrees).  Only when $G=SL_2, GL_2, PGL_2$ is it fully known (i.e. for all $\eta$)  whether $\kappa_\eta$ is surjective.

\subsection{Main Theorem}
As we explain in Section \ref{sec:Z-Bundles}, the action of $\Bun(Z,C)$ on $\Higgst$ induces compatible actions of the finite group\footnote{More precisely, the moduli stack $\Bun(\pi_0(Z), C)$ is a finite gerbe over a finite group, but the action on cohomology factors through the finite group.}
$\Bun(\pi_0(Z), C)$ on $H^*(\Higgst)$ and $H^*(\Higgsp)$.  The 
$\Bun(\pi_0(Z), C)$-fixed part of $H^*(\Higgst)$ and $H^*(\Higgsp)$ is called the {\em invariant cohomology}.
The quotients by the 
$\Bun(\pi_0(Z), C)$-invariant cohomology
\bd
H^*(\Higgst)^{\on{variant}} := H^*(\Higgst)/H^*(\Higgst)^{\Bun(\pi_0(Z), C)},
\ed
\bd 
H^*(\Higgsp)^{\on{variant}} := H^*(\Higgsp)/H^*(\Higgsp)^{\Bun(\pi_0(Z), C)}
\ed
\noindent
are the {\em variant cohomology}.  One analogously defines {\em variant intersection cohomology} groups \newline $IH^*(\Higgst)^{\on{variant}}$, $IH^*(\Higgsp)^{\on{variant}}$ (with $\mathbb{Q}$-coefficients).

In Definition \ref{specific bundle def}, we identify a particular $G$-bundle, denoted $P_0$; write $\Higgst_{\eta_0}$ for the component of $\Higgst$, indexed by the element $\eta_0\in\pi_1(G)$, in which $P_0$ lies.  We then prove:
\begin{thm}\label{main thm}
If the component group $\pi_0(Z(G))$ is nontrivial, then for every $\eta \in\eta_0+\pi_1(Z^\circ)$:
\begin{enumerate}
\item The variant cohomology groups $H^*\big(\Higgst_\eta\big)^{\on{variant}}$, $H^*\big(\Higgsp_\eta\big)^{\on{variant}}$ 
 contain the quotient 
$\mathbb{Q}[\Bun(\pi_0(Z),C)]/\mathbb{Q}$
of the regular representation of $\Bun(\pi_0(Z),C)$.
\item Similarly, the variant intersection cohomology groups 
$IH^*\big(\Higgst_\eta\big)^{\on{variant}}$, \newline $IH^*\big(\Higgsp_\eta\big)^{\on{variant}}$  contain the quotient 
$\mathbb{Q}[\Bun(\pi_0(Z),C)]/\mathbb{Q}$
of the regular representation of $\Bun(\pi_0(Z),C)$.
\end{enumerate}
In particular,  
\begin{enumerate}
\item[(3)] $\kappa_\eta$ fails to be surjective.
\item[(4)] $p^*_\eta H^*\big(\Higgsp_\eta\big)\not\subseteq \kappa_\eta^* H^*\big(\Bun(G,C)_\eta\big)$.  
\end{enumerate}
\end{thm}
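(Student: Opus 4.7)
The first task is to reduce (3) and (4) to (1). The crux is showing that $\kappa_\eta$ factors through the $\Bun(\pi_0(Z),C)$-invariant part of $H^*(\Higgst_\eta)$. Tensoring by a $\pi_0(Z)$-bundle preserves topological type (since $\pi_1(\pi_0(Z))=0$, so the image in $\pi_1(G)$ vanishes), giving a $\Bun(\pi_0(Z),C)$-action on $\Bun(G,C)_\eta$ compatible via $k_\eta$ with the one on $\Higgst_\eta$; this induced action is trivial on $H^*(\Bun(G,C)_\eta)$ because the standard tautological generators are fixed under twisting by central bundles. Hence the image of $\kappa_\eta$ lies in the invariant subring, and once (1) produces nontrivial variant cohomology, (3) is immediate. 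Part (4) then follows similarly, once $p_\eta^*$ is shown to reach a non-invariant class; this will be furnished by the same explicit construction used to prove (1).

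The substantive content is (1)--(2), and I plan a three-step construction-and-localization argument. First, starting from the specific bundle $P_0 \in \Bun(G,C)_{\eta_0}$ of Definition \ref{specific bundle def}, build a semistable Higgs bundle $(P_0,\phi_0)$ whose automorphism group in $G$-bundle automorphisms visibly contains $\pi_0(Z)$; for general $\eta \in \eta_0 + \pi_1(Z^\circ)$, twist by a $Z^\circ$-bundle of appropriate type to land in $\Higgst_\eta$ while preserving this stabilizer structure. Second, identify a $\Bun(\pi_0(Z),C)$-stable locus $F \subseteq \Higgst_\eta$ adapted to $(P_0,\phi_0)$---most naturally, a suitable portion of the $\Bun(Z,C)$-orbit---on which the $\Bun(\pi_0(Z),C)$-action realizes a summand containing the regular representation of the finite group $\Bun(\pi_0(Z),C)$. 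The geometric reason is that the inner $\pi_0(Z)$-stabilizer at $(P_0,\phi_0)$ is ``unwound'' over $C$ by the outer action of $\Bun(\pi_0(Z),C) = \Hom(\pi_1(C),\pi_0(Z))$, producing a regular-representation summand in $H^*(F)$.

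Third, apply the Borel--Quillen-style localization for equivariant cohomology of stacks promised in the abstract to transport this regular-representation summand into $H^*(\Higgst_\eta)$, and its intersection-cohomology analogue into $IH^*(\Higgst_\eta)$. Since rationally the $\Bun(\pi_0(Z),C)$-invariant part of $H^*(\Higgst_\eta)$ agrees with the image of its equivariant cohomology, the noninvariant part of the regular representation survives in variant cohomology, giving the claimed embedding $\mathbb{Q}[\Bun(\pi_0(Z),C)]/\mathbb{Q} \hookrightarrow H^*(\Higgst_\eta)^{\on{variant}}$. The coarse-moduli statements follow by running the same construction on $\Higgsp_\eta$ (or by pushing forward along $p_\eta$), and the intersection-cohomology refinement follows by functoriality of $IH^*$.

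The main obstacle, I expect, is the second step: arranging the locus $F$ and the $\Bun(\pi_0(Z),C)$-action on its cohomology so that the full regular representation, rather than a proper subrepresentation, is realized. This is precisely the ``explicit construction and calculation'' flagged in the abstract, and it will require a careful match between the inner $\pi_0(Z)$-symmetry of $(P_0,\phi_0)$ and the outer $\Bun(\pi_0(Z),C)$-symmetry coming from twists by $\pi_0(Z)$-bundles over $C$.
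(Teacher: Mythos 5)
Your overall architecture --- reduce (3),(4) to (1) via triviality of the twisting action on $H^*\big(\Bun(G,C)_\eta\big)$, then build a specific Higgs pair from $P_0$, locate its $\Bun(Z,C)$-orbit inside the $\Gm$-fixed locus, and transport the regular representation back via Borel--Quillen localization --- is exactly the paper's strategy. However, two of your supporting justifications are not valid as stated. The claim that twisting by a $\pi_0(Z)$-bundle preserves topological type ``since $\pi_1(\pi_0(Z))=0$'' proves nothing: $\pi_0(Z)$ is a finite discrete group, so its fundamental group is trivially zero, but the component of $P_Z\times_Z P$ is governed by the boundary map $H^1(C,\pi_0(Z))\to H^2(C,\pi_1(G))$ attached to the extension $1\to\pi_1(G)\to\widetilde{\pi_0(Z)}\to\pi_0(Z)\to 1$; the paper (Proposition \ref{action on connected components}) shows this vanishes using the Universal Coefficient Theorem and freeness of $H^1(C,\mathbb{Z})$, which is a genuine input. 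Likewise ``the tautological generators are fixed'' would itself require proof; the paper instead observes that $\underline{\Bun}(Z,C)$ acts on the affine space of $\overline{\partial}$-operators presenting $\Bun(G,C)$, so the action on cohomology is homotopically trivial.

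More seriously, your step 2 --- which you correctly flag as the main obstacle --- is left open, and it is where essentially all the work lives. It is not enough to exhibit a $\Bun(\pi_0(Z),C)$-stable locus $F$ in the fixed locus whose cohomology carries the regular representation: for localization to transport that representation into $H^*\big(\Higgst_\eta\big)$, you need $H^*(F)$ to be a \emph{direct summand} of $H^*\big((\Higgst_\eta)_{\on{fixed}}\big)$ as a $\Bun(\pi_0(Z),C)$-module, which forces $F$ to be a union of connected components of the entire $\Gm$-fixed locus, permuted freely. The paper secures this by (i) proving the orbit map $\Bun(Z,C)\to\Higgst$ at $(P_0,\theta_0)$ is a closed immersion (Proposition \ref{action map}, resting on the computation $Z_G(e)=Z\times\mathsf{U}$ of Lemma \ref{regular centralizer}), (ii) identifying its image as exactly $\Higgst_{\on{fixed}}\cap\Higgst_{\on{reg}}$, which is open (intersection with the open regular locus) and closed (image of a proper map), hence a union of components (Corollary \ref{crucial corollary}, Fact \ref{homeomorphism fact}), and (iii) noting that $\Bun(\pi_0(Z),C)$ acts freely by translation on $\pi_0\big(\Bun(Z,C)\big)$, so the regular representation already appears in $H^0$. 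One further point your sketch elides: the passage from the localized equivariant cohomology back to $H^*\big(\Higgst_\eta\big)$ uses that the former is an iterated subquotient of $H^*\big(\Higgst_\eta\big)\otimes\mathbb{Q}[u^{\pm1}]$ together with semisimplicity of $\mathbb{Q}$-representations of the finite group, not an identification of the invariant part with an image of equivariant cohomology. Without these ingredients the plan does not yet constitute a proof.
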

This shows that, for semisimple $G$, the Kirwan map can {\em only} be surjective when $G$ is of adjoint type, generalizing the work of Hitchin (cf. \cite{Hitchin} and \cite{HT}) for $SL_n$.  
\begin{remark}
The proof shows (see Remark \ref{MHS remark 2}) that the representation $\mathbb{Q}[\Bun(\pi_0(Z),C)]$ of 
$\Bun(\pi_0(Z),C)$
 in part (1) of Theorem \ref{main thm} appears in the pure parts 
 \bd
 \bigoplus_k \on{Gr}_k^W\big(H^k\big(\Higgst_\eta\big)\big) \; \text{and}  \; \bigoplus_k \on{Gr}_k^W\big(H^*\big(\Higgsp_\eta\big)\big)
 \ed
  with respect to the mixed Hodge structure.  In particular, the Kirwan map (whose domain $H^*\big(\Bun(G,C)_\eta\big)$ is pure) fails to surject onto the pure part of the cohomology.
\end{remark}

When $Z(G)$ is connected, one {\em does} expect $\kappa$ to be an isomorphism; this is proven when $G=GL_n$ for coprime rank $n$ and degree $d$ (when the map $p$ becomes a $\Gm$-gerbe over a smooth variety) in \cite{Markman}, and for $G=GL_2, PGL_2$ and $d=0$ in \cite{DWWW}, but the bulk of cases seem to remain open.

\subsection{Strategy of Proof}
The proof of Theorem \ref{main thm} makes essential use of the commuting actions of the multiplicative group $\Gm$, acting by scaling the Higgs field, and the abelian group stack $\Bun(Z,C)$ of principal $Z$-bundles, acting by ``tensoring'' the $G$-Higgs bundle, on $\Higgst$ and $\Higgsp$.  Section \ref{sec:Z-Bundles} develops basic properties of the action of $\Bun(Z,C)$, in particular showing that $\Bun(Z,C)$ acts infinitesimally freely on $\Higgst$ and that the induced action of $H^1(C,\pi_0(Z))$, the group of isomorphism classes of principal $\pi_0(Z)$-bundles, preserves connected components of $\Higgst$.  

We show that whenever $H^1(C,\pi_0(Z))$ is nontrivial it acts nontrivially on 
$H^*\big(\Higgst_\eta\big)$  and $H^*\big(\Higgsp_\eta\big)$, whereas it acts trivially on $H^*\big(\Bun(G,C)_\eta\big)$.  Essentially the same idea  was used in \cite{DWWW} (without using the $\Gm$-action to simplify) and \cite{HT}.  

Section \ref{Eq Coh and Localization} develops the principal computational tool in our proof, a Borel-Quillen--style localization theorem for equivariant cohomology of stacks that, applied to the $\Gm$-action, allows us to reduce to a calculation on a certain $\Gm$-fixed locus, $\Higgst_{\on{fixed}}$ in the notation of that section.  We show by explicit construction that $\Higgst_{\on{fixed}}$ contains a union of connected components isomorphic to $\Bun(Z,C)$, compatibly with the action of $\Bun(Z,C)$ on $\Higgst$.  It follows that $H^1(C,\pi_0(Z))$ acts nontrivially on $H^*\big(\Higgst\big)$, whereas it acts trivially on $H^*\big(\Bun(G,C)\big)$.  This proves the first assertion of Theorem \ref{main thm} for the cohomology of the stack (and essentially the same proof gives the intersection cohomology statement); the proof of the assertions for the moduli space is similar.
  
 Although the use of $\mathbb{C}^*$ localization to establish topological features of algebraic varieties or other topological spaces is completely standard, it seems to be new to use it for {\em Artin stacks} rather than merely Deligne-Mumford stacks.  While the statement we use is not hard, it appears to be new.   It turns out that there is broader story of such localizations that seems not to be as well known as we think warranted, and that we plan to develop further in a future paper.

A result related to Theorem \ref{main thm} will appear in \cite{McNkirwan}, providing explicit examples of finite-dimensional hyperk\"ahler manifolds $\mathsf{M}$ with the action of compact groups $K$ for which the hyperk\"ahler Kirwan map $H^*_K(\mathsf{M})\rightarrow H^*(\mathsf{M}/\!\!/\!\!/ K)$  also fails to be surjective.  
 We hope that, with the growing lists of examples in which the hyperk\"ahler Kirwan map is \cite{JKK, Markman, McN-HKK, DWWW} or is not (\cite{DWWW, HT, McNkirwan} and the present paper) surjective, we are developing enough data to suggest a characterization of when $\kappa$ should be surjective.

We also note that we have chosen to include some basic facts that could be cited in the literature rather than developed from scratch as we have done here---while this runs the risk of giving the impression that we claim originality for facts well known to the community, we simply note that even the present more-or-less self-contained paper is not long.

\subsection{Acknowledgments}
We are grateful to Kevin McGerty for many conversations, and to Brian Collier, Kevin McGerty, Steve Rayan, Charles Rezk, Laura Schaposnik,  and Eric Vasserot for help with references.  All three authors were supported by NSF grant DMS-1502125.
The second author was supported by a Simons Foundation fellowship and NSF grant DMS-1802094.

\section{Notation, Conventions, and Basics of $G$-Higgs Bundles}
\subsection{Notation and Conventions}
Throughout the paper we work with varieties, schemes, and stacks over $\mathbb{C}$.  We write $\Gm$ for the multiplicative group $\mathbb{C}^*$.
We write $\mathbb{A}^1(d)$ for $\mathbb{A}^1$ with the $\Gm$-action of weight $d$, that is, $t\cdot v = t^dv$.
\begin{center}
{\em As in the introduction, all singular cohomology groups are taken with $\mathbb{Q}$-coefficients.}
\end{center}

Throughout the paper we fix a smooth projective (connected) complex curve $C$ of genus $g\geq 2$.
Choose a spin bundle $K^{1/2}_C$ and let $\spin$ denote the associated $\Gm$-bundle, so that 
$K^{1/2}_C = \spin \times_{\Gm} \mathbb{A}^1(1)$.  Let $\mathsf{K}^{-1/2}$  denote the dual $\Gm$-bundle, so $\mathsf{K}^{-1/2}\times_{\Gm} \mathbb{A}^1(-1)\cong K^{1/2}_C$.

We also work with a fixed connected, reductive complex algebraic group $G$ with center $Z=Z(G)$. 
We fix a Borel subgroup $B\subset G$ and a maximal torus $T\subset B$; let $U\subset B$ denote the unipotent radical.  We write $\mathfrak{g} = \on{Lie}(G)$, $\mathfrak{b} = \on{Lie}(B)$, $\mathfrak{n} = \on{Lie}(U)$, $\mathfrak{z}  = \on{Lie}(Z)$, and $\mathcal{N}$ for the nilpotent cone of $\mathfrak{g}$.  
We let $\mathfrak{g}^{\on{reg}}$ denote the regular locus of $\mathfrak{g}$.

For an affine algebraic group $H$, we write $\Bun(H,C)$ for the moduli stack of principal $H$-bundles on $C$.  When the identity component of $H$ is an algebraic torus, we write $\underline{\Bun}(H,C)$ for the coarse moduli scheme of principal $H$-bundles.

We use the terminology ``abelian group'' and ``commutative group'' interchangeably.  However, we prefer the phrases ``commutative group scheme/stack'' because of the risk that ``abelian group scheme'' suggests that the scheme is proper (hence an abelian variety) over some given base.

\subsection{Basics on $\Higgst$ and $\Higgsp$}\label{sec:Higgs basics}

Let $(\cE,\theta)$ be a $G$-Higgs bundle on $C$.
Recall that for a parabolic subgroup $\mathsf{P}\subset G$ and $\mathsf{P}$-reduction $\cE_{\mathsf{P}}$ of $\cE$, we say $\cE_{\mathsf{P}}$ is {\em compatible with $\theta$} if $\theta$ is (the image of) a section of $\cE_{\mathsf{P}} \times_{\mathsf{P}} \mathfrak{p}\otimes \Omega^1_C\subset \cE_{\mathsf{P}}\times_{\mathsf{P}}\mathfrak{g}\otimes\Omega^1_C$. 
Then the $G$-Higgs bundle $(\cE,\theta)$ is {\em semistable}, respectively {\em stable}, if for every proper parabolic subgroup ${\mathsf{P}}\subset G$ and ${\mathsf{P}}$-reduction $\cE_{\mathsf{P}}$ of $\cE$ compatible with $\theta$, the bundle $\cE_{\mathsf{P}}\times_{\mathsf{P}} \mathfrak{g}/\mathfrak{p}$ has nonpositive, respectively negative, degree.  The center $Z(G)$ always acts by automorphisms of the pair $(\cE,\theta)$; we say $(\cE,\theta)$ is {\em simple} if $\on{Aut}(\cE,\theta) = Z(G)$.

\begin{notation}
Let $\Higgst$ denote the moduli stack of semistable $G$-Higgs bundles ({\em not} the moduli stack of all $G$-Higgs bundles!) and $\Higgsp$ the coarse moduli space of semistable $G$-Higgs bundles.
\end{notation}

The quasi-projective scheme $\Higgsp$ is a coarse space of $\Higgst$.  
We note some well-known facts (cf. the ``Main Properties'' of \cite{Alper}).
\begin{prop}\label{Higgs basics}
\mbox{}
Consider the morphism $p: \Higgst\rightarrow\Higgsp$.
\begin{enumerate}
\item The morphism $p: \Higgst\rightarrow\Higgsp$ is surjective.
\item Each fiber of $p$ contains a unique closed point of the moduli stack.  
\item The morphism $p$ is universally closed; in particular, the topology of $\Higgsp$ is the quotient topology of the stack $\Higgst$.  
\end{enumerate}
\end{prop}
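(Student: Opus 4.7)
The plan is to identify $p: \Higgst\rightarrow\Higgsp$ as a good moduli space morphism in Alper's sense, after which all three assertions follow from general properties of such morphisms. Concretely, the classical GIT construction --- due in varying generality to Hitchin, Nitsure, and Simpson --- presents $\Higgst$ as a quotient stack $[R^{ss}/\GL_N]$, where $R^{ss}$ denotes the semistable locus in a suitable Quot-style parameter scheme of framed Higgs bundles, and $\Higgsp$ as the GIT quotient $R^{ss}/\!/\GL_N$. Alper shows that such a morphism $[R^{ss}/\GL_N]\to R^{ss}/\!/\GL_N$ is automatically a good moduli space morphism, for which one needs only that $\GL_N$ is reductive and that $R^{ss}$ consists of the semistable points for some linearization, both of which are built into the GIT setup.

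For parts (1) and (3), Alper's ``Main Properties'' directly yield surjectivity and universal closedness of any good moduli space morphism. Universal closedness (and in particular closedness) together with surjectivity then force the topology on $\Higgsp$ to coincide with the quotient topology of $\Higgst$. For part (2), another of Alper's main properties states that each fiber of a good moduli space morphism contains a unique closed point of the source stack. Concretely, the closed points of $\Higgst$ correspond to polystable $G$-Higgs bundles (those whose $\GL_N$-orbits in $R^{ss}$ are closed), and a Jordan--H\"older argument shows that each $S$-equivalence class contains a unique polystable representative, namely the associated graded of any Jordan--H\"older filtration.

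There is no genuine obstacle here: once the good-moduli identification is in place, (1), (2), (3) are all simply quoted from Alper. The only ``work'' --- if one is resistant to invoking the GIT construction as a black box --- would be to verify the good-moduli axioms ($p_*\cO_{\Higgst} = \cO_{\Higgsp}$ and cohomological affineness of $p$) directly, but these too are standard consequences of the semistability condition for Higgs bundles and its reduction to GIT, so the only real task is bookkeeping of references.
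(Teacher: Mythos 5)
Your proposal matches the paper's own treatment: the paper states these facts without proof, citing exactly the ``Main Properties'' of Alper's good moduli spaces, which is the route you take. Your added detail on the GIT presentation and the polystable/Jordan--H\"older description of closed points is consistent with that citation and fills in the standard background.
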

The conditions of stability, respectively simplicity, define open substacks $\mathcal{M}_{\on{Higgs}}^{\on{s}}(G,C)$, respectively $\mathcal{M}_{\on{Higgs}}^{\on{spl}}(G,C)$ of $\Higgst$.  Write 
$\mathcal{M}_{\on{Higgs}}^{\on{st-spl}}(G,C) := \mathcal{M}_{\on{Higgs}}^{\on{s}}(G,C)\cap \mathcal{M}_{\on{Higgs}}^{\on{spl}}(G,C)$.

For the morphism $p$ of Proposition \ref{Higgs basics}, we have 
\bd
p\inv\big(p(\mathcal{M}_{\on{Higgs}}^{\on{s}}(G,C))\big) = \mathcal{M}_{\on{Higgs}}^{\on{s}}(G,C), \hspace{1em} 
p\inv\big(p(\mathcal{M}_{\on{Higgs}}^{\on{st-spl}}(G,C))\big) = \mathcal{M}_{\on{Higgs}}^{\on{st-spl}}(G,C).
\ed
\begin{notation}
We write:
$M^{\on{s}}_{\on{Higgs}}(G,C) := p\big(\mathcal{M}_{\on{Higgs}}^{\on{s}}(G,C)\big)$ and
$M^{\on{st-spl}}_{\on{Higgs}}(G,C) := p\big(\mathcal{M}_{\on{Higgs}}^{\on{st-spl}}(G,C)\big)$.
\end{notation}
\begin{lemma}\label{BZ torsor}
The morphism $\mathcal{M}_{\on{Higgs}}^{\on{st-spl}}(G,C)\rightarrow M^{\on{st-spl}}_{\on{Higgs}}(G,C)$ is a torsor for the commutative group stack $BZ$, inducing an identification
$M^{\on{st-spl}}_{\on{Higgs}}(G,C) = \mathcal{M}_{\on{Higgs}}^{\on{st-spl}}(G,C)/BZ$.
\end{lemma}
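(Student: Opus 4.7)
The plan is to exploit the definition of simplicity to identify the stack $\mathcal{M}_{\on{Higgs}}^{\on{st-spl}}(G,C)$ as a $BZ$-gerbe over its rigidification, and then to identify that rigidification with the coarse space $M^{\on{st-spl}}_{\on{Higgs}}(G,C)$.

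First I would observe the following key input: by definition of $\mathcal{M}_{\on{Higgs}}^{\on{spl}}(G,C)$, every object $(\cE,\theta)$ in $\mathcal{M}_{\on{Higgs}}^{\on{st-spl}}(G,C)$ has $\on{Aut}(\cE,\theta) = Z$, and this identification is canonical (since $Z = Z(G)$ acts by the natural central action on any $G$-bundle). The same holds in families: the inertia stack of $\mathcal{M}_{\on{Higgs}}^{\on{st-spl}}(G,C)$ contains a distinguished copy of the constant group scheme $Z$, sitting centrally inside every automorphism group. This is the standard hypothesis needed to apply the rigidification procedure of Abramovich--Olsson--Vistoli / Romagny.

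Next, I would invoke rigidification: given an algebraic stack $\mathcal{X}$ together with a flat central subgroup $H$ of its inertia, there is an algebraic stack $\mathcal{X}/\!\!/\!H$ and a canonical morphism $\mathcal{X}\to \mathcal{X}/\!\!/\!H$ which is a $BH$-gerbe, equivalently (since $H = Z$ is commutative) a torsor for the commutative group stack $BH$. Applied to $\mathcal{X} = \mathcal{M}_{\on{Higgs}}^{\on{st-spl}}(G,C)$ and $H = Z$, this produces a morphism
\[
\mathcal{M}_{\on{Higgs}}^{\on{st-spl}}(G,C) \longrightarrow \mathcal{M}_{\on{Higgs}}^{\on{st-spl}}(G,C)/\!\!/\!Z
\]
which is a $BZ$-torsor. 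Because the rigidified stack has trivial automorphism groups everywhere (we have quotiented out the entire stabilizer at each point), it is an algebraic space.

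Finally, I would identify $\mathcal{M}_{\on{Higgs}}^{\on{st-spl}}(G,C)/\!\!/\!Z$ with $M^{\on{st-spl}}_{\on{Higgs}}(G,C)$. By Proposition \ref{Higgs basics}, $p$ is universally closed, and on the stable--simple locus each fiber of $p$ has a single point with automorphism group exactly $Z$; so $p$ restricted to $\mathcal{M}_{\on{Higgs}}^{\on{st-spl}}(G,C)$ is bijective on $\C$-points up to the $BZ$-action. Moreover, $p$ factors canonically through the rigidification by the universal property (the algebraic space $M^{\on{st-spl}}_{\on{Higgs}}(G,C)$ has trivial automorphism groups and so kills the central $Z$), giving a morphism of algebraic spaces $\mathcal{M}_{\on{Higgs}}^{\on{st-spl}}(G,C)/\!\!/\!Z \to M^{\on{st-spl}}_{\on{Higgs}}(G,C)$; this is a bijection on points, and combining with the fact that $p$ is a good moduli map on this open locus (so the coarse space is its GIT quotient), one concludes the map is an isomorphism.

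The hard part will be the last identification: verifying that the abstract rigidification of the stack genuinely agrees with the coarse moduli space as algebraic spaces, not merely as sets or topological spaces. This rests on the universal property of $p$ as a coarse space and the fact that, because every orbit on the stable--simple locus already has the full automorphism group $Z$, the coarse-space quotient introduces no further collapsing beyond what is done by rigidification.
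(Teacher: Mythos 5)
The paper offers no proof of Lemma \ref{BZ torsor} at all---it is stated as one of the ``basic facts'' the authors treat as standard---so there is nothing internal to compare your argument against; judged on its own, your rigidification argument is correct and is essentially the canonical way to prove the statement. The two key inputs are exactly right: on the stable--simple locus the inertia is the constant (central, flat) group scheme $Z$, so the rigidification exists, is an algebraic space (since you quotient the \emph{entire} inertia), and the map to it is a gerbe banded by the commutative group $Z$, hence a torsor under the group stack $BZ$; and this $BZ$-action agrees with the one induced from $\Bun(Z,C)$ because twisting by a trivializable $Z$-bundle is twisting by the central automorphisms. The one step I would tighten is the final identification: a bijection on $\C$-points between algebraic spaces does not by itself give an isomorphism, and that clause is doing no real work in your argument. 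The clean way to finish, using only what you already invoke, is uniqueness of good moduli spaces: $\mathcal{M}_{\on{Higgs}}^{\on{st-spl}}(G,C)$ is a saturated open substack of $\Higgst$ (the paper records $p\inv\big(p(\mathcal{M}_{\on{Higgs}}^{\on{st-spl}}(G,C))\big) = \mathcal{M}_{\on{Higgs}}^{\on{st-spl}}(G,C)$), so the restriction of $p$ is a good moduli space in the sense of \cite{Alper}; on the other hand the rigidification map is a gerbe banded by the diagonalizable (hence linearly reductive) group $Z$ over an algebraic space, hence is also a good moduli space; uniqueness then identifies the two targets. With that substitution your proof is complete.
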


Consider the product $C\times\Bun(G,C)\rightarrow BG\times B\Gm$ of the classifying morphism
$C\times\Bun(G,C)\rightarrow BG$ for the universal $G$-bundle and the composite 
$C\times\Bun(G,C)\rightarrow C \rightarrow B\Gm$ where the latter map classifies the $\Gm$-bundle associated to $\Omega^1_C$.  The universal $G$-Higgs field then determines a morphism 
$C\times\Higgst\rightarrow \mathfrak{g}/(G\times\Gm)$ making the following square commute:
\bd
\xymatrix{C\times\Higgst \ar[r]\ar[d] & \mathfrak{g}/(G\times\Gm) \ar[d]\\
C\times\Bun(G,C)\ar[r] & BG\times B\Gm.
}
\ed
If $S\subset\mathfrak{g}$ is $G\times\Gm$-stable, we slightly abuse terminology by saying $(P,\theta)$ {\em takes values in $S/G\times\Gm$} if the restricted universal morphism $C\times\{(P,\theta)\}\rightarrow \mathfrak{g}/G\times\Gm$ does.  
A $G$-Higgs pair $(P, \theta)$ is {\em regular} if it  takes values in $\mathfrak{g}^{\on{reg}}/G\times\Gm$ and {\em nilpotent} if it takes values in $\mathcal{N}/G\times\Gm$.
It is immediate from the properness of $C$ that:
\begin{lemma}\label{regular locus}
 The regular $G$-Higgs bundles form an open substack $\Higgst_{\on{reg}}$ of $\Higgst$. 
 \end{lemma}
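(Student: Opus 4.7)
The plan is to deduce openness of $\Higgst_{\on{reg}}$ from two inputs: (i) the regular locus $\mathfrak{g}^{\on{reg}}\subset\mathfrak{g}$ is open and $G\times\Gm$-stable, so the quotient $\mathfrak{g}^{\on{reg}}/(G\times\Gm)$ is an open substack of $\mathfrak{g}/(G\times\Gm)$; and (ii) the curve $C$ is proper. Input (i) is the classical statement that the set of elements of a reductive Lie algebra whose centralizer has minimal dimension is Zariski open, and is manifestly invariant under the adjoint and scaling actions.

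First, I would pull back the open immersion $\mathfrak{g}^{\on{reg}}/(G\times\Gm)\hookrightarrow\mathfrak{g}/(G\times\Gm)$ along the universal morphism $C\times\Higgst\to\mathfrak{g}/(G\times\Gm)$ described in the excerpt. This produces an open substack $\cU\subset C\times\Higgst$ characterized by the property that $(x,(P,\theta))\in\cU$ if and only if the value of $\theta$ at $x$ is regular. By the definition of a regular $G$-Higgs pair, $(P,\theta)$ is regular precisely when $C\times\{(P,\theta)\}$ is contained in $\cU$; equivalently, $(P,\theta)$ fails to be regular exactly when it lies in the image under the second projection $\pi\colon C\times\Higgst\to\Higgst$ of the closed complement $\cZ=(C\times\Higgst)\setminus\cU$.

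Finally, since $C$ is proper over $\spec\C$, the projection $\pi$ is proper (being the base change of $C\to\spec\C$) and in particular is a closed morphism of stacks. Thus $\pi(\cZ)\subset\Higgst$ is closed, and its complement $\Higgst_{\on{reg}}$ is open, as desired. There is no serious obstacle: the argument is a direct unpacking of the pointwise definition of ``regular,'' combined with openness of $\mathfrak{g}^{\on{reg}}$ and properness of $C$—which is precisely why the excerpt advertises the result as ``immediate from the properness of $C$.'' The only subtlety worth mentioning is that we use closedness of the projection for an Artin stack target, which is harmless since it can be checked smooth-locally on $\Higgst$.
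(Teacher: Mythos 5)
Your argument is correct and is exactly the one the paper intends: the paper offers no written proof beyond the remark that the lemma ``is immediate from the properness of $C$,'' and your unpacking---pulling back the open locus $\mathfrak{g}^{\on{reg}}/(G\times\Gm)$ to $C\times\Higgst$ and observing that the non-regular locus is the image of a closed substack under the proper projection to $\Higgst$---is precisely that argument. No gaps.
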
 
%\begin{proof}
%The subset $\Omega^1_C\otimes (\mathfrak{g}\smallsetminus \mathfrak{g}^{\on{reg}})/G$ is closed, hence its preimage in $C\times\Higgst$ is closed.  Since $C$ is proper, the projection of this closed subset to $\Higgst$, consisting of Higgs pairs that are somewhere--non-regular, is also closed.
%\end{proof}

\section{Principal $Z(G)$-Bundles and the Action on $G$-Higgs Bundles}\label{sec:Z-Bundles}
\subsection{Principal $Z(G)$-Bundles}
Write $Z=Z(G)$.  We have a short exact sequence
\begin{equation}\label{ex seq of center}
1\rightarrow Z^\circ \rightarrow Z \rightarrow \pi_0(Z)\rightarrow 1,
\end{equation}
where $Z^\circ$ is the identity component.  Note that $Z^\circ$ is a torus and $\pi_0(Z)$ is a finite abelian group.   
\begin{lemma}\label{ab-gp-splitting}
\mbox{}
\begin{enumerate}
\item Suppose
$1\longrightarrow D \longrightarrow A \longrightarrow F\longrightarrow 1$
is an exact sequence of abelian groups, where $F$ is finitely generated abelian and $D$ is a divisible group.  Then $A\cong F\times D$.
\item In particular, for the center $Z(G)$ of a connected reductive group $G$ over $\mathbb{C}$ we have $Z(G)\cong Z^\circ\times\pi_0(Z)$.
\end{enumerate}
\end{lemma}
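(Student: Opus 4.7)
The plan is to reduce part (1) to a classical homological-algebra fact, then apply it to deduce part (2).

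First I would prove (1) by invoking the fact that every divisible abelian group is an injective object in the category of abelian groups (this follows from Baer's criterion: since $\mathbb{Z}$ is a PID, injectivity can be checked against ideals $n\mathbb{Z}$, which reduces to divisibility). Thus $\mathrm{Ext}^1_{\mathbb{Z}}(F,D)=0$ for \emph{any} abelian group $F$ (finitely generated or not), and so the given extension splits. Concretely, applying injectivity to the identity $D\to D$ along the inclusion $D\hookrightarrow A$ produces a retraction $r\colon A\to D$; choosing any set-theoretic section $s\colon F\to A$ of the surjection $A\to F$ and adjusting by $-s\circ r$ on the $D$-component yields a group-theoretic splitting, giving $A\cong D\times F$.

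Next I would deduce (2) by applying (1) to the sequence \eqref{ex seq of center}, which requires checking two hypotheses. The component group $\pi_0(Z)$ is finitely generated because $Z$, as a closed subgroup of the algebraic group $G$, has only finitely many connected components; in fact $\pi_0(Z)$ is finite. For divisibility of $Z^\circ$: since $G$ is reductive, $Z(G)$ is a (diagonalizable) group of multiplicative type, so its identity component $Z^\circ$ is a complex algebraic torus, hence isomorphic as an abstract group to $(\mathbb{C}^*)^k$ for some $k\geq 0$, which is manifestly divisible.

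There is no real obstacle here: the whole content is that divisible implies injective. The only thing to be careful about is that the splitting produced in (1) is only as abstract groups, which is all that (2) asserts as well (the center of $G$ need not split algebraically as $Z^\circ\times\pi_0(Z)$ in general, but it does split as an abstract group, which is what the rest of the paper requires).
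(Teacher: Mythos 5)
Your proof is correct. The paper states Lemma \ref{ab-gp-splitting} without proof, so there is nothing to compare against directly; your argument via ``divisible implies injective as a $\mathbb{Z}$-module, hence $\mathrm{Ext}^1_{\mathbb{Z}}(F,D)=0$'' is the standard one, and your verification of the hypotheses in part (2) ($Z^\circ$ a torus, hence divisible as an abstract group; $\pi_0(Z)$ finite) matches the facts the paper records immediately after the exact sequence \eqref{ex seq of center}. Your observation that finite generation of $F$ is not actually needed is accurate --- that hypothesis would only matter for a more elementary proof via the structure theorem for finitely generated abelian groups (lift the free part freely, and use divisibility of $D$ to correct lifts of torsion generators to elements of the right order) --- and your closing caveat that the splitting is only one of abstract groups, which is all the paper uses, is also well taken.
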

\begin{remark}\label{choice of splitting}
In light of Lemma \ref{ab-gp-splitting}(2), we henceforth fix a choice of splitting $Z(G)\cong Z^\circ\times\pi_0(Z)$.
\end{remark}

\noindent
For a commutative group scheme $A$ whose identity component is an algebraic torus, write $\underline{\on{Bun}}(A, C) = H^1(C,A)$ for the coarse moduli space of $A$-bundles on $C$.
The splitting $Z(G)\cong Z^\circ\times\pi_0(Z)$ determines an isomorphism  of group schemes
\begin{equation}\label{ses of groups}
\underline{\on{Bun}}(Z,C) = \underline{\on{Bun}}(Z^\circ, C) \times
\underline{\on{Bun}}(\pi_0(Z), C).
\end{equation}

 We have
$\Bun(A,C) \cong B(A)\times \underline{\on{Bun}}(A,C)$ when $A$ is any of the groups $Z^\circ, Z$, or $\pi_0(Z)$; in particular, $\Bun(\pi_0(Z),C)$ is a gerbe over a finite abelian group scheme.
  Since $Z^\circ$ is a torus,  
\begin{equation}
\underline{\on{Bun}}(Z^\circ,C) 
\cong \on{Jac}(C)^r\times \mathbb{Z}^r.
\end{equation}
\begin{prop}\label{properness of BunZ}
We have
\bd
\Bun(Z,C)\cong B(Z^\circ)\times B(\pi_0(Z)) \times \on{Jac}(C)^r \times \pi_0\left(\underline{\Bun}(Z^\circ,C)\right) \times \pi_0\left(\underline{\Bun}(\pi_0(Z),C)\right).
\ed
In particular, $\Bun(Z,C)$ is a disjoint union of connected components, each of which is a $BZ$-torsor over a nonsingular projective variety.
\end{prop}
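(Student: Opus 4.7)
The plan is to assemble the decomposition from the pieces already in hand. First, apply the splitting $Z\cong Z^\circ\times\pi_0(Z)$ of Lemma \ref{ab-gp-splitting}(2) (and Remark \ref{choice of splitting}) to get
\[
\Bun(Z,C)\cong \Bun(Z^\circ,C)\times \Bun(\pi_0(Z),C).
\]
Next, use the fact, recorded in the paragraph preceding the proposition, that for $A$ equal to any of $Z^\circ$, $Z$, or $\pi_0(Z)$ one has $\Bun(A,C)\cong BA\times\underline{\Bun}(A,C)$; this is just the observation that the connected center of $A$ is a torus, so the forgetful map has a section given by tensoring with a fixed bundle, and the gerbe is trivialized by the action of $BA$ on itself by translation. (In the case of the finite group $\pi_0(Z)$, the gerbe is banded by $\pi_0(Z)$ and is trivial since every $\pi_0(Z)$-bundle is a quotient of a $Z$-bundle, or equivalently because $H^2(\underline{\Bun}(\pi_0(Z),C),\pi_0(Z))$-classes arising this way vanish.) Combining these splittings gives
\[
\Bun(Z,C)\cong B(Z^\circ)\times B(\pi_0(Z))\times \underline{\Bun}(Z^\circ,C)\times \underline{\Bun}(\pi_0(Z),C).
\]

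Next I plug in the explicit form of each coarse moduli space. For the torus $Z^\circ$ of some rank $r$, the isomorphism $\underline{\Bun}(Z^\circ,C)\cong \on{Jac}(C)^r\times\mathbb{Z}^r$ (stated in the paragraph preceding the proposition) identifies the identity component with $\on{Jac}(C)^r$ and the component group $\pi_0(\underline{\Bun}(Z^\circ,C))$ with $\mathbb{Z}^r$; since $\underline{\Bun}(Z^\circ,C)$ is a torsor for its identity component, one has an isomorphism (after choosing basepoints in each component)
\[
\underline{\Bun}(Z^\circ,C)\cong \on{Jac}(C)^r\times \pi_0\bigl(\underline{\Bun}(Z^\circ,C)\bigr).
\]
For the finite group $\pi_0(Z)$, the coarse moduli space $\underline{\Bun}(\pi_0(Z),C)=H^1(C,\pi_0(Z))$ is discrete (it is a finite abelian group), so equals its own component group. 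Substituting these into the previous display yields the asserted isomorphism.

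For the ``in particular'' statement, the product $B(Z^\circ)\times B(\pi_0(Z))$ is canonically $BZ$ (using the splitting), and $\pi_0(\underline{\Bun}(Z^\circ,C))\times \pi_0(\underline{\Bun}(\pi_0(Z),C))$ is discrete, indexing the connected components. Each connected component of $\Bun(Z,C)$ is therefore a copy of $BZ\times\on{Jac}(C)^r$, which is manifestly a $BZ$-torsor over the nonsingular projective variety $\on{Jac}(C)^r$. The main thing to be careful about, and the only step that requires genuine argument rather than bookkeeping, is the triviality of the gerbes $\Bun(A,C)\to \underline{\Bun}(A,C)$; everything else is a recombination of facts already cited.
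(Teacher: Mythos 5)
Your assembly is correct and is essentially the paper's own (implicit) argument: the proposition is stated there without proof, as a direct consequence of the splitting $Z\cong Z^\circ\times\pi_0(Z)$, the asserted isomorphisms $\Bun(A,C)\cong B(A)\times\underline{\Bun}(A,C)$ for $A\in\{Z^\circ,Z,\pi_0(Z)\}$, and $\underline{\Bun}(Z^\circ,C)\cong\on{Jac}(C)^r\times\mathbb{Z}^r$, exactly as you recombine them. One caveat on the step you rightly single out as the only substantive one: your parenthetical justification of the gerbe-splitting does not work as stated---the translation action of $BA$ only exhibits $\Bun(A,C)$ as a gerbe banded by $A$ (equivalently a $BA$-torsor over the coarse space), which need not be trivial, and ``every $\pi_0(Z)$-bundle is a quotient of a $Z$-bundle'' is not relevant; the standard argument is to rigidify at a chosen point $c\in C$, which kills all automorphisms of an $A$-bundle (an automorphism is a global section of the constant group scheme $A$) and hence produces a scheme mapping isomorphically to $\underline{\Bun}(A,C)$ together with the desired product decomposition. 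Since the paper itself asserts this splitting without proof, this does not affect the comparison, but you should not present the translation action as the reason.
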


\subsection{Action of $\Bun(Z,C)$ on $\Bun(G,C)$ and $T^*\Bun(G,C)$}
The stack $\Bun(Z,C)$ of principal $Z$-bundles on $C$ is a commutative group stack, with operation 
$(\cE,\cF)\mapsto \cE \times_Z \cF$.  The group stack $\Bun(Z,C)$ acts naturally on $\Bun(H,C)$ as well by the same formula when $\cF$ is a principal $H$-bundle for any closed subgroup $Z\subseteq H\subseteq G$.  

In particular, the action of $\Bun(Z,C)$ on $\Bun(G,C)$ induces an action of $\Bun(Z,C)$ on $T^*\Bun(G,C)$, the moduli stack of all $G$-Higgs bundles on $C$.  
Because the action of $\Bun(Z,C)$ preserves parabolic reductions of a given $G$-bundle and $Z$ acts trivially on $\mathfrak{g}/\mathfrak{p}$ for any parabolic subalgebra $\mathfrak{p}\subset \mathfrak{g}$:
\begin{remark}\label{preservation of open subsets}
The action of $\Bun(Z,C)$ on $T^*\Bun(G,C)$ preserves the open substacks of semistable, stable, and simple $G$-Higgs bundles (definitions as in Section \ref{sec:Higgs basics}). 
\end{remark} 

Let $(P,\theta)$ be a $G$-Higgs bundle.  One gets a complex of vector bundles over $C$,
\bd
\mathsf{Def}(P,\theta) = P\times_G\mathfrak{g}\xrightarrow{[\theta,-]} P\times_G\mathfrak{g}\otimes \Omega^1_C
\ed
with $P\times_G\mathfrak{g}$ in cohomological degree $-1$.  For the fiber of the tangent complex at $(P,\theta)$ one has 
\bd
\mathbb{T}_{(P,\theta)}\big(T^*\Bun(G,C)\big) \simeq \check{C}^\bullet\big(C,\mathsf{Def}(P,\theta)\big),
\ed
where $\check{C}^\bullet$ denotes the \v{C}ech complex associated to any choice of affine open cover of $C$.

Note that the Lie algebra $\mathfrak{g}$ splits as a $G$-representation, 
\begin{equation}\label{g-splitting}
\mathfrak{g} \cong \mathfrak{z}\oplus \mathfrak{g}_{\on{ad}}.
\end{equation}
It follows that $P\times_G\mathfrak{z}[1] = \theo_C\otimes\mathfrak{z}[1]$ is a direct summand of 
$\mathsf{Def}(P,\theta)$.  Writing $\mathsf{triv}$ for the trivial $Z$-bundle on $C$, we have $\mathbb{T}_{\mathsf{triv}}\big(\Bun(Z,C)\big) \simeq \check{C}^\bullet\big(C,\theo_C\otimes\mathfrak{z}[1]\big)$.  We conclude:
\begin{lemma}
For any choice of $G$-Higgs bundle $(P,\theta)$, the infinitesimal actions of $\Bun(Z,C)$ on $T^*\Bun(G,C)$ and $\Bun(G,C)$,
\bd
\mathbb{T}_{\mathsf{triv}}\big(\Bun(Z,C)\big) \rightarrow \mathbb{T}_{(P,\theta)}\big(T^*\Bun(G,C)\big) 
\rightarrow \mathbb{T}_P\big(\Bun(G,C)\big)
\ed
express $\mathbb{T}_{\mathsf{triv}}\big(\Bun(Z,C)\big)$ as a direct summand of both $\mathbb{T}_{(P,\theta)}\big(T^*\Bun(G,C)\big)$ and $\mathbb{T}_P\big(\Bun(G,C)\big)$.
\end{lemma}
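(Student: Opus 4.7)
The plan is to exploit the $G$-representation splitting \eqref{g-splitting} and the additivity of the constructions in sight, reducing the statement to functoriality of the tangent complex.

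First I would pass the splitting $\mathfrak{g}\cong\mathfrak{z}\oplus\mathfrak{g}_{\on{ad}}$ to associated bundles: since $G$ acts trivially on $\mathfrak{z}$, for any $G$-bundle $P$ on $C$ one has $P\times_G\mathfrak{z}\cong \theo_C\otimes\mathfrak{z}$, and $P\times_G\mathfrak{g}\cong (\theo_C\otimes\mathfrak{z})\oplus(P\times_G\mathfrak{g}_{\on{ad}})$. The bracket $[\theta,-]$ kills the central summand because $\mathfrak{z}$ is central in $\mathfrak{g}$, so the deformation complex splits as
\[
\mathsf{Def}(P,\theta)\;\cong\;\bigl(\theo_C\otimes\mathfrak{z}[1]\bigr)\;\oplus\;\mathsf{Def}(P,\theta)_{\on{ad}},
\]
where $\mathsf{Def}(P,\theta)_{\on{ad}}=\bigl(P\times_G\mathfrak{g}_{\on{ad}}\xrightarrow{[\theta,-]}P\times_G\mathfrak{g}_{\on{ad}}\otimes\Omega^1_C\bigr)$. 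Applying $\check{C}^\bullet(C,-)$, which is additive, yields a corresponding direct sum decomposition of $\mathbb{T}_{(P,\theta)}\big(T^*\Bun(G,C)\big)$, in which the $\mathfrak{z}$-summand is exactly $\check{C}^\bullet(C,\theo_C\otimes\mathfrak{z}[1])\simeq \mathbb{T}_{\mathsf{triv}}\big(\Bun(Z,C)\big)$.

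Next I would identify this summand with the image of the infinitesimal action map. The action morphism $a\colon \Bun(Z,C)\times T^*\Bun(G,C)\to T^*\Bun(G,C)$ is induced by the map of group-valued functors sending $(\cE,(P,\theta))$ to $(\cE\times_Z P,\theta)$, where $\theta$ transports via the natural identification $(\cE\times_Z P)\times_G\mathfrak{g}\cong P\times_G\mathfrak{g}$ coming from $Z$-triviality on $\mathfrak{g}$. Differentiating at $(\mathsf{triv},(P,\theta))$, the induced map on tangent complexes is the one coming from the inclusion of bundles $\theo_C\otimes\mathfrak{z}\hookrightarrow P\times_G\mathfrak{g}$, shifted by $[1]$; this is precisely the inclusion of the distinguished summand constructed above. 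The same argument at the level of $P\times_G\mathfrak{g}$ (without Higgs field) identifies the composite with the summand inclusion for $\mathbb{T}_P\big(\Bun(G,C)\big)$.

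The main (mild) obstacle is pinning down the infinitesimal action map: one must verify that under the canonical identification of $\mathbb{T}_{(P,\theta)}\big(T^*\Bun(G,C)\big)$ with $\check{C}^\bullet(C,\mathsf{Def}(P,\theta))$, the differential of $a$ agrees with the split inclusion of $\theo_C\otimes\mathfrak{z}[1]$. This is a routine unwinding of the definition of $a$ using the description of first-order deformations of a $G$-bundle by $1$-cocycles in $P\times_G\mathfrak{g}$, together with the observation that central first-order deformations of $\cE$ at $\mathsf{triv}$ contribute exactly central first-order deformations of $\cE\times_Z P$ that commute with $[\theta,-]$. Once this identification is in hand, the direct-summand conclusion for both $T^*\Bun(G,C)$ and $\Bun(G,C)$ is immediate from additivity of $\check{C}^\bullet$.
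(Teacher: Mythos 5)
Your argument is correct and is essentially the paper's own: the paper also derives the lemma from the $G$-representation splitting $\mathfrak{g}\cong\mathfrak{z}\oplus\mathfrak{g}_{\on{ad}}$, which exhibits $\theo_C\otimes\mathfrak{z}[1]$ as a direct summand of $\mathsf{Def}(P,\theta)$ and identifies its \v{C}ech complex with $\mathbb{T}_{\mathsf{triv}}\big(\Bun(Z,C)\big)$. Your extra care in checking that the differential of the action map realizes this summand inclusion is a reasonable elaboration of a step the paper leaves implicit.
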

Since $\Bun(Z,C)$ is a gerbe over a smooth scheme, we conclude:
\begin{corollary}\label{injectivity}
For any choice of $G$-Higgs bundle $(P,\theta)$, the induced morphisms of ``action on $P$, respectively $(P,\theta)$,''
\bd
\xymatrix{
\Bun(G,C) & \Bun(Z,C)\ar[l]\ar[r] & T^*\Bun(G,C)
}
\ed
are representable and injective on tangent spaces.
\end{corollary}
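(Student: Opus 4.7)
The plan is to deduce injectivity on tangent spaces from the preceding lemma by a translation argument in the group stack $\Bun(Z,C)$, and to deduce representability from the standard criterion that a morphism of Artin stacks is representable if and only if it is faithful, i.e.\ the induced map on automorphism groups is injective at every point.

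For the tangent space assertion, fix $(P,\theta)$ and write $\phi:\Bun(Z,C)\rightarrow T^*\Bun(G,C)$ for the action map $\cE\mapsto \cE\times_Z(P,\theta)$ (and analogously for $\Bun(G,C)$). The preceding lemma shows $d\phi|_{\mathsf{triv}}$ is a direct summand inclusion. For any other $\cE_0\in\Bun(Z,C)$, let $(P',\theta') = \cE_0\times_Z(P,\theta)$ and let $\tau_{\cE_0^{-1}}:\Bun(Z,C)\rightarrow\Bun(Z,C)$ denote translation by $\cE_0^{-1}$. Associativity of the action yields $\phi_{(P,\theta)} = \phi_{(P',\theta')}\circ\tau_{\cE_0^{-1}}$, so
\[
d\phi_{(P,\theta)}|_{\cE_0} \;=\; d\phi_{(P',\theta')}|_{\mathsf{triv}}\,\circ\,d\tau_{\cE_0^{-1}}|_{\cE_0}.
\]
Since $\tau_{\cE_0^{-1}}$ is an isomorphism of stacks and the preceding lemma applies equally to $(P',\theta')$, the composite is injective. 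The same argument applied to $\phi_P:\Bun(Z,C)\rightarrow\Bun(G,C)$ handles the second map.

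For representability, recall that a morphism of Artin stacks is representable precisely when, for every $T$-point, the induced homomorphism of automorphism group sheaves is a monomorphism. For a $Z$-bundle $\cE$ on $C\times T$, since $Z$ is commutative the adjoint group sheaf $\cE\times_Z Z$ is the constant sheaf $Z$, and pushforward along the proper connected $C\rightarrow\spec\mathbb{C}$ gives $\underline{\AAut}(\cE) = Z\times T$. The action on $\cE\times_Z P$ of a section $z:T\rightarrow Z$ sends $(e,p)\mapsto(e\cdot z,p) = (e,p\cdot z)$ by centrality; this is (fiberwise) right translation on $P$ by an element of $G$, hence injective because the $G$-action on the torsor $P$ is faithful. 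The Higgs field $\theta$ lands in $\cE\times_Z P\times_G\mathfrak{g}$ and is visibly preserved by this action, so the same check works for $T^*\Bun(G,C)$.

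The only real subtlety is bookkeeping with the equivariance identity $\phi_{(P,\theta)} = \phi_{(P',\theta')}\circ\tau_{\cE_0^{-1}}$, namely that translating the source and changing the base Higgs pair produce the same morphism; once that is in place, both conclusions are one-line invocations of the preceding lemma and the faithfulness of right multiplication on a torsor.
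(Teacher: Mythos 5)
Your proposal is correct and follows essentially the same route as the paper: representability via injectivity of $Z(G)\rightarrow\on{Aut}(P)$ and $Z(G)\rightarrow\on{Aut}(P,\theta)$ (your torsor-faithfulness argument is the same fact), and tangent-space injectivity from the direct-summand lemma. The only difference is that you explicitly spell out the translation identity $\phi_{(P,\theta)} = \phi_{(P',\theta')}\circ\tau_{\cE_0^{-1}}$ needed to propagate the lemma from the trivial $Z$-bundle to an arbitrary point of $\Bun(Z,C)$, a step the paper leaves implicit (its lemma is stated only at $\mathsf{triv}$ but holds for every base pair $(P,\theta)$, which is exactly what your bookkeeping uses).
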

\begin{proof}
The homomorphisms $Z(G)\rightarrow \on{Aut}(P,\theta)$ and $Z(G)\rightarrow\on{Aut}(P)$ are always injective, hence the morphisms are representable.  By the lemma, the induced map on $H^0(\mathbb{T})$ is injective, hence the morphisms are immersions.
\end{proof}

\subsection{Connected Components of $\Bun(G,C)$}
Let $\wt{G}$ denote the universal cover of $G$ as a complex-analytic Lie group, with the exact sequence
\begin{equation}\label{universal cover of G}
1\longrightarrow \pi_1(G)\longrightarrow \wt{G}\longrightarrow G \longrightarrow 1.
\end{equation}
Each of these groups determines a complex-analytic sheaf of groups over the complex-analytic curve $C_{\on{an}}$, abusively denoted by the same symbol.  
The set of $\mathbb{C}$-points of the stack $\Bun_G(C)$ is the (non-abelian) cohomology set $H^1(C_{\on{an}},G)$.
We recall (for example, from \cite{Hoffmann}) that the set of connected components of the stack $\Bun_G(C)$ is classified by the connecting homomorphism of cohomology sets
\bd
H^1(C_{\on{an}},G)\xrightarrow{\delta} H^2(C,\pi_1(G)) = \pi_1(G):
\ed
that is, the connected components of $\Bun_G(C)$ are labelled by $\pi_1(G)$, and the fibers of $\delta$ are the isomorphism classes lying in the given connected component.

Via the splitting $Z = Z^\circ\times \pi_0(Z)$ (Remark \ref{choice of splitting}), we get  a homomorphism $\pi_0(Z)\hookrightarrow G$.  Pulling back the exact sequence \eqref{universal cover of G} gives an extension 
\begin{equation}\label{ex seq for pi0Z}
1\longrightarrow \pi_1(G)\longrightarrow \wt{\pi_0(Z)} \longrightarrow \pi_0(Z) \longrightarrow 1
\end{equation}
inducing a commutative square
\bd
\xymatrix{H^1(C,\pi_0(Z)) \ar[r]\ar[d] & H^2(C,\pi_1(G))\ar[d]^{=}\\
H^1(C,G) \ar[r]^{\delta} & H^2(C,\pi_1(G))}
\ed
and making $H^1(C,G)\xrightarrow{\delta}H^2(C,\pi_1(G))$ an equivariant map of $H^1(C,\pi_0(Z))$-sets.
\begin{prop}\label{action on connected components}
The group $H^1(C,\pi_0(Z))$ acts trivially on $H^2(C,\pi_1(G))$.  Thus, the abelian group stack $\Bun(\pi_0(Z),C)$ acts on $\Bun(G,C)$ preserving connected components.
\end{prop}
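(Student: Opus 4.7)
The plan is to show that the connecting homomorphism
$\delta' \colon H^1(C,\pi_0(Z)) \to H^2(C,\pi_1(G))$
associated to \eqref{ex seq for pi0Z} (that is, the top row of the commutative square displayed above) is identically zero. Granting the equivariance of $\delta$, this implies the first assertion, since by equivariance the action of $\zeta\in H^1(C,\pi_0(Z))$ on $H^2(C,\pi_1(G))$ must be translation by $\delta'(\zeta)$. The second assertion, that $\Bun(\pi_0(Z),C)$ acts preserving components of $\Bun(G,C)$, then follows immediately, because the components of $\Bun(G,C)$ are by construction the fibers of $\delta$.

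A preliminary observation underlies both the equivariance statement and the computation: $\wt{\pi_0(Z)}$ is central in $\wt G$. Since $\wt G$ is connected and $\pi_1(G) \subset Z(\wt G)$ is discrete, for any $x$ in the preimage of the central subgroup $\pi_0(Z)\subset Z(G)$ the continuous commutator map $[x,-]\colon \wt G \to \pi_1(G)$ must be constantly trivial. In particular $\wt{\pi_0(Z)}$ is abelian, so \eqref{ex seq for pi0Z} is a short exact sequence of (constant sheaves of) abelian groups on $C$ and gives rise to a long exact sequence in sheaf cohomology. A standard \v{C}ech cocycle computation, using centrality to commute lifts $\wt\zeta_{ij} \in \wt{\pi_0(Z)}$ of a $\pi_0(Z)$-cocycle past lifts $\wt g_{ij} \in \wt G$ of a $G$-cocycle, then yields the identity $\delta(\zeta \cdot [P]) = \delta'(\zeta) + \delta([P])$, establishing the translation equivariance.

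For the vanishing of $\delta'$, I would invoke the long exact cohomology sequence for \eqref{ex seq for pi0Z}: one has $\delta' = 0$ if and only if the map $H^2(C,\pi_1(G)) \to H^2(C,\wt{\pi_0(Z)})$ is injective. The universal coefficient theorem applied to the compact orientable surface $C$ gives a natural isomorphism $H^2(C,A) \cong A$ for any abelian group $A$ (since $H_1(C,\mathbb{Z}) = \mathbb{Z}^{2g}$ is free and $H_2(C,\mathbb{Z}) = \mathbb{Z}$); the map in question is thus identified with the inclusion $\pi_1(G) \hookrightarrow \wt{\pi_0(Z)}$ from \eqref{ex seq for pi0Z}, which is injective by construction. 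The one step that requires genuine content, rather than diagram bookkeeping, is the centrality of $\wt{\pi_0(Z)}$ in $\wt G$ and the resulting cocycle identity; once these are in hand, the vanishing of $\delta'$ is immediate from the topology of the surface.
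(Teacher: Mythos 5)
Your proof is correct and takes essentially the same route as the paper: both reduce the statement to the vanishing of the connecting map $\delta'\colon H^1(C,\pi_0(Z))\to H^2(C,\pi_1(G))$ for \eqref{ex seq for pi0Z} and kill it via the Universal Coefficient Theorem on the surface $C$ --- the paper checks surjectivity of $H^1(C,\wt{\pi_0(Z)})\to H^1(C,\pi_0(Z))$ while you check injectivity of $H^2(C,\pi_1(G))\to H^2(C,\wt{\pi_0(Z)})$, which is the same exactness argument read one term further along. Your explicit verification that $\wt{\pi_0(Z)}$ is central (hence abelian) in $\wt{G}$ is a detail the paper leaves implicit but genuinely needs for the coefficient-group bookkeeping, so it is a welcome addition rather than a deviation.
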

\begin{proof}
Via the commutative square, we see that it suffices to show that the boundary map $H^1(C,\pi_0(Z))\rightarrow H^2(C,\pi_1(G))$ associated to \eqref{ex seq for pi0Z} is zero.  But since the groups in \eqref{ex seq for pi0Z} are discrete and $H^*(C,\mathbb{Z})$ is a free $\mathbb{Z}$-module, the Universal Coefficient Theorem gives
\bd
H^1(C,\wt{\pi_0(Z)}) = H^1(C,\mathbb{Z})\otimes \wt{\pi_0(Z)} \twoheadrightarrow H^1(C,\mathbb{Z})\otimes \pi_0(Z) = H^1(C,\pi_0(Z)),
\ed
implying that the boundary map is zero.
\end{proof}

\subsection{Action of $\Bun(Z,C)$ on Components of $\Bun(G,C)$}
Now, choose a principal $G$-bundle $P$ on $C$, determining a $\mathbb{C}$-point of $\Bun(G,C)$.  We obtain a morphism
\bd
\psi_P: \Bun(Z,C)\rightarrow \Bun(G,C), \hspace{1em} P_Z \mapsto P_Z\times_Z P.
\ed
\begin{prop}\label{how components behave}
The map of sets of connected components induced by the morphism $\psi_P$ maps $\pi_0\left(\Bun(Z^\circ,C)\right) = \pi_1(Z^\circ)$ injectively to $\pi_0\left(\Bun(G,C)\right) = \pi_1(G)$.  The image  of $\pi_0\left(\Bun(Z^\circ, C)\right) \rightarrow \pi_0\left(\Bun(G,C)\right)$ is a finite-index subgroup.
\end{prop}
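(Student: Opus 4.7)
The plan is to reduce the proposition to a statement purely about the homomorphism $\iota:\pi_1(Z^\circ)\rightarrow\pi_1(G)$ induced by the inclusion $Z^\circ\hookrightarrow G$, and then to verify the needed properties of $\iota$ using the structure theory of connected reductive groups.

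First I would analyze $\psi_P$ on $\pi_0$.  Because $Z$ is central in $G$, the multiplication $Z\times G\rightarrow G$ is a group homomorphism, and a standard \v{C}ech-cocycle computation (lifting transition data to the universal cover of $G$, cf.~\eqref{universal cover of G}) yields the additivity formula
\bd
c\big(P_Z\times_Z P\big) \;=\; \iota_*\big([P_Z]\big)\;+\;c(P)\quad\text{in}\quad \pi_1(G)=\pi_0\big(\Bun(G,C)\big),
\ed
where $c$ denotes the topological-type/connecting map and $\iota_*:\pi_0(\Bun(Z,C))\rightarrow\pi_1(G)$ is the map induced by $Z\hookrightarrow G$.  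Thus on $\pi_0$ the morphism $\psi_P$ is $\iota_*$ translated by $c(P)$.  By Proposition \ref{action on connected components}, $H^1(C,\pi_0(Z))$ lies in the kernel of $\iota_*$, so via the splitting of Proposition \ref{properness of BunZ} the map $\iota_*$ factors through the projection onto the direct summand $\pi_1(Z^\circ)$, and the resulting homomorphism is exactly $\iota$.  Since translation by $c(P)$ is a bijection of $\pi_1(G)$, it suffices to show that $\iota:\pi_1(Z^\circ)\rightarrow\pi_1(G)$ is injective with finite cokernel.

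Next I would invoke the structure of $G$.  The multiplication map $Z^\circ\times G^{\on{der}}\rightarrow G$ is a central isogeny with finite kernel $K := Z^\circ\cap G^{\on{der}}$, fitting into
\bd
1\longrightarrow K\longrightarrow Z^\circ\times G^{\on{der}}\longrightarrow G\longrightarrow 1.
\ed
Viewing these as complex Lie groups and applying the long exact sequence of homotopy groups (using $\pi_1(K)=0$ since $K$ is finite, and the connectedness of $Z^\circ$ and $G^{\on{der}}$) produces a short exact sequence
\bd
0\longrightarrow \pi_1(Z^\circ)\oplus\pi_1(G^{\on{der}})\longrightarrow \pi_1(G)\longrightarrow K\longrightarrow 0.
\ed
The map $\iota$ is the composite $\pi_1(Z^\circ)\hookrightarrow \pi_1(Z^\circ)\oplus\pi_1(G^{\on{der}})\hookrightarrow\pi_1(G)$, hence injective.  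Its cokernel is an extension of the finite group $K$ by $\pi_1(G^{\on{der}})$, which is finite because $G^{\on{der}}$ is semisimple; hence $\iota(\pi_1(Z^\circ))$ has finite index in $\pi_1(G)$, completing the proof.

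The main obstacle is the cocycle bookkeeping that establishes the additivity formula for $c$; once this is in hand, the rest is routine algebraic topology combined with the standard decomposition of a connected reductive group as an almost-direct product of its connected center and derived subgroup.
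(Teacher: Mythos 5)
Your proposal is correct and follows essentially the same route as the paper: both reduce the statement to showing that $\iota:\pi_1(Z^\circ)\to\pi_1(G)$ is injective with finite cokernel, and both deduce this from the almost-direct-product (isogeny) decomposition of $G$ into its connected center and its semisimple derived part. The only cosmetic difference is that you extract injectivity directly from the homotopy long exact sequence of $1\to K\to Z^\circ\times G^{\mathrm{der}}\to G\to 1$, whereas the paper first notes that $\iota$ has finite kernel and then uses torsion-freeness of $\pi_1(Z^\circ)$ to conclude that the kernel is trivial.
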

\begin{proof}
It suffices to show that the homomorphism $H^2(\iota): H^2(C,\pi_1(Z^\circ))\rightarrow H^2(C,\pi_1(G))$ is injective with finite cokernel.  But up to an isogeny, $G$ is the product of a torus $\mathbb{T}$ and a semisimple group, with the torus $\mathbb{T}$ thus being isogenous to $Z^\circ$.  Since a semisimple group has finite $\pi_1$, we conclude:
\begin{equation}\label{components subgroup}
\text{The map} \hspace{1em} \iota: \pi_1(Z^\circ) \rightarrow \pi_1(G) \hspace{1em} \text{has finite kernel and cokernel.}
\end{equation}
 Since the domain of $\iota$ is torsion-free, the kernel is trivial.  Finally, it follows from the Universal Coefficient Theorem that $H^2(\iota)$ is obtained by tensoring $\pi_1(Z^\circ) \rightarrow \pi_1(G)$ with $H^2(C,\mathbb{Z}) = \mathbb{Z}$.
\end{proof}

\section{Equivariant Cohomology and Partial Localization}\label{Eq Coh and Localization}
In this section, let $V$ be a reduced, separated scheme of finite type over $\mathbb{C}$, viewed as a topological space in the analytic topology, with the action of a complex reductive algebraic group $\Gm\times G$.

\subsection{Fixed Points on Stacks}
The stack $V/G$ admits an action of the multiplicative group $\Gm$ induced from the product action on $V$; we write $(t,v)\mapsto t\cdot v$ for this action.  Replacing that action by $a_m(t,v) = t^m \cdot v$ for each positive integer $m$,  we obtain the action on $V/G$ of the pro-object $\ds\hGm = \lim_{\longleftarrow} \Gm$ in the category of algebraic groups, defined by all coverings of $\Gm$ by itself.  

For simplicity write $\cX = V/G$.  When we wish to consider $\cX$ with its $\Gm$-action via $a_m$ we write $\cX_{a_m}$.  
Recall \cite[Definition~2.3]{Romagny} that a {\em stack of fixed points} $\cX^{\Gm}_{a_m}$ is a stack $F$ that represents the $2$-functor $N\mapsto \Hom_{\Gm-\on{Stacks}}(\iota(N),\cX_{a_m})$ where $\iota$ takes a stack $N$ to the $\Gm$-stack with trivial $\Gm$-action.  
A fixed point $\on{Spec}(\mathbb{C}) \rightarrow \cX^{\Gm}_{a_m}$ can be viewed
as a choice of section of $(\cX_{a_m})/\Gm \rightarrow B\Gm$.

There are morphisms 
$\cX^{\Gm}_{a_m}\rightarrow \cX^{\Gm}_{a_{m\cdot k}}\rightarrow \cX$
 for all integers $m,k>0$, thus defining 
 \bd
\cX^{\hGm} = \lim_{\longrightarrow} \cX^{\Gm}_{a_m} \longrightarrow \cX.
 \ed

For each $x\in V$, write $\on{Lie}(\Gm\times G)_x$ for the Lie algebra of the stabilizer $(\Gm\times G)_x$.  We write 
\bd
\fixed{V/(\Gm\times G)} = \{x\in V \; |\; \on{Lie}(\Gm\times G)_x\rightarrow \on{Lie}(\Gm) \; \text{is surjective}\}/(\Gm\times G).
\ed

\begin{lemma}
The condition above defines a closed substack $\fixed{V/(\Gm\times G)}$ of $V/(\Gm\times G)$.
\end{lemma}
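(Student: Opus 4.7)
My plan is to reformulate the Lie algebra surjectivity condition as a rank condition on the infinitesimal action map, observe the $(\Gm\times G)$-invariance, and then establish closedness of the resulting subset of $V$. Write $\alpha:\on{Lie}(\Gm\times G)\otimes\theo_V\to T_V$ for the infinitesimal action, $\phi:\on{Lie}(G)\otimes\theo_V\to T_V$ for its restriction to $G$, and $\eta\in\Gamma(V,T_V)$ for the fundamental vector field of the $\Gm$-action. Since $\on{Lie}((\Gm\times G)_x)=\ker(\alpha_x)$ inside $\on{Lie}(\Gm\times G)$, the surjectivity onto $\on{Lie}(\Gm)$ translates into the existence of $\xi\in\on{Lie}(G)$ with $\eta(x)+\phi_x(\xi)=0$, i.e.\ $\eta(x)\in\phi_x(\on{Lie}(G))\subset T_xV$; equivalently, $\on{rank}(\alpha_x)=\on{rank}(\phi_x)$.

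The locus is manifestly $(\Gm\times G)$-invariant: stabilizers at points of a common orbit are conjugate, and the projection $\on{Lie}(\Gm\times G)\to\on{Lie}(\Gm)$ commutes with conjugation by $\Gm\times G$ since $\Gm\times G$ is abelian in the $\Gm$-direction. So it suffices to show the underlying subset of $V$ is Zariski-closed; the corresponding quotient is then a closed substack of $V/(\Gm\times G)$.

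For the closedness of the subset, I would stratify $V$ by the locally closed strata $V_r=\{x:\on{rank}(\phi_x)=r\}$ (recall that $\on{rank}(\phi_x)$ is lower semicontinuous). On each $V_r$ the condition becomes $\on{rank}(\alpha_x)\le r$, which is closed by lower semicontinuity of the rank of $\alpha$. To promote these stratumwise closed loci to a globally closed subset, the cleanest route is to identify our locus with the image in $V/G$ of the stack of $\Gm$-fixed points $(V/G)^{\Gm}$, which is already closed by the representability result of Romagny cited just above. In the reductive characteristic-zero setting, scheme-theoretic stabilizers of the $(\Gm\times G)$-action are smooth, so the Lie algebra condition coincides with the group-theoretic condition that $[x]\in V/G$ be $\Gm$-fixed, aligning the two descriptions.

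The step I expect to be the main obstacle is the passage from stratumwise closedness to global closedness: one needs to rule out that $\on{rank}(\alpha_x)-\on{rank}(\phi_x)\in\{0,1\}$ jumps from $0$ to $1$ under specialization. Controlling this will use the full $(\Gm\times G)$-equivariance of $\alpha$, $\phi$, and $\eta$, together with upper semicontinuity of stabilizer dimensions for the reductive group action, so that the locus can indeed be realized as the preimage of a closed substack of $V/G$.
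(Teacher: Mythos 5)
The paper states this lemma without proof, so there is no argument of the authors' to compare yours against; I can only assess your proposal on its own terms. Your reformulation is correct and is surely the right starting point: writing $\alpha_x$, $\phi_x$ for the infinitesimal actions of $\Gm\times G$ and of $G$ at $x$ and $\eta$ for the fundamental vector field of $\Gm$, the condition is exactly $\eta(x)\in\on{Im}(\phi_x)$, i.e.\ $\on{rank}(\alpha_x)=\on{rank}(\phi_x)$. But the step you yourself flag as ``the main obstacle'' is a genuine gap, and neither of your proposed fixes closes it. Romagny's results give existence and representability of fixed-point stacks; they do not assert that the image of $(V/G)^{\Gm}$ --- or of $(V/G)^{\hGm}$, which is what Proposition \ref{fixed points and localization} actually identifies with this locus (the isogeny in the Levi-splitting argument forces the pro-group) --- is closed in $V/G$, and for Artin stacks with positive-dimensional affine stabilizers it need not be. Likewise, semicontinuity of stabilizer dimensions cuts both ways: $\dim(\Gm\times G)_x$ and $\dim G_x$ are each upper semicontinuous, and the locus where their difference attains its maximal value $1$ is precisely the kind of set that semicontinuity alone does not control.

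In fact the gap cannot be filled at the stated level of generality, because the lemma is false for a general reduced separated finite-type $V$ with a $\Gm\times G$-action. Take $G=SL_2$, $V=\mathfrak{sl}_2\oplus\mathfrak{sl}_2$ with the diagonal adjoint action, and let $\Gm$ act by $t\cdot(X,Y)=(tX,Y)$; the actions commute. The infinitesimal stabilizer of $(X,Y)$ is $\{(\tau,\xi)\,:\,\tau X+[\xi,X]=0,\ [\xi,Y]=0\}$, and the condition is that some such pair has $\tau=1$. For $s\neq 0$ put $Y_s=e+sf$ and $X_s=e-\sqrt{s}\,h-sf$, so that $[Y_s,X_s]=2\sqrt{s}\,X_s$; then $\bigl(1,\,-\tfrac{1}{2\sqrt{s}}Y_s\bigr)$ lies in the infinitesimal stabilizer, so $(X_s,Y_s)$ satisfies the condition. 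But $(X_s,Y_s)\to(e,e)$ as $s\to 0$, and at $(e,e)$ the equation $[\xi,e]=0$ forces $\xi\in\mathbb{C}e$, whence $\tau e+[\xi,e]=\tau e=0$ and $\tau=0$: the limit point fails the condition, so the locus is not closed. Note that $[X_s,Y_s]\neq 0$, so these points do not lie on the commuting variety $\mu\inv(0)\subseteq T^*\mathfrak{sl}_2$. Any correct proof must therefore use structure beyond the hypotheses you (and the lemma as literally stated) invoke --- most plausibly the setting of the application, where $V=\mu\inv(0)\subseteq T^*M$ and $\Gm$ is the fiberwise scaling, as in Lemma \ref{cot fixed points description}.
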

We write $\mathcal{X}_{\on{fixed}}=\fixed{V/G}$ for the pre-image in $V/G$ of $\fixed{V/(\Gm\times G)}$ (cf. Appendix C  of \cite{GP} in the case of DM stacks).
We have:
\begin{prop}\label{fixed points and localization}
The set of images of $\mathbb{C}$-points of $(V/G)^{\hGm}\rightarrow V/G$ is exactly $\fixed{V/G}$.
\end{prop}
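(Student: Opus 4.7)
The plan is to unwind the functor-of-points definition of $(V/G)^{\hGm}$, reducing the proposition to integrating an infinitesimal Lie-algebra datum to a genuine algebraic cocharacter of the stabilizer $H:=(\Gm\times G)_x$.

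First I would note that, by the definition of $\cX^{\Gm}_{a_m}$, a $\mathbb{C}$-point lying over $[x]\in V/G$ (with $x\in V$) amounts to a 1-parameter subgroup $\phi\colon\Gm\to G$ satisfying $t^m\cdot x=\phi(t)\cdot x$ for all $t\in\Gm$; equivalently, $t\mapsto(t^m,\phi(t)^{-1})$ is a cocharacter $\lambda\colon\Gm\to H$ whose projection to the $\Gm$-factor is the $m$-th power map. Thus $[x]$ lies in the image of $(V/G)^{\hGm}\to V/G$ precisely when such a $\lambda$ exists for some $m>0$. The forward implication is then immediate: the differential $d\lambda$ makes the composite $\on{Lie}(\Gm)\to\on{Lie}(H)\to\on{Lie}(\Gm)$ equal to multiplication by $m\neq 0$, so $\on{Lie}(H)\to\on{Lie}(\Gm)$ is surjective and $x\in\fixed{V/G}$.

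For the converse, suppose $\on{Lie}(H)\to\on{Lie}(\Gm)$ is surjective. Then the image of $H\to\Gm$ is a closed subgroup of $\Gm$ whose Lie algebra is all of $\on{Lie}(\Gm)$, hence it equals $\Gm$, and likewise $H^{\circ}\twoheadrightarrow\Gm$. Standard structure theory of connected affine algebraic groups---the surjection onto an abelian target kills the unipotent radical of $H^{\circ}$ and the derived subgroup of the resulting Levi quotient, and a maximal torus of a reductive group surjects onto its abelianization---then shows that any maximal torus $T'\subset H^{\circ}$ surjects onto $\Gm$. The induced map of cocharacter lattices $X_*(T')\to X_*(\Gm)=\mathbb{Z}$ is a nonzero homomorphism of free abelian groups with image $m\mathbb{Z}$ for some $m\geq 1$, and any preimage of $m$ gives a cocharacter $\lambda\colon\Gm\to T'\hookrightarrow H$ projecting to $t\mapsto t^m$, which is exactly the required $\hGm$-fixed point datum.

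The only delicate step is the converse integration argument, and this is also what forces the use of the pro-object $\hGm$ rather than just $\Gm$: the cocharacter produced may project to a nontrivial power map rather than the identity (as already happens when $T'=\Gm$ and $T'\to\Gm$ is itself an $n$-th power map), so one genuinely needs all the covers $a_m$ in the definition. Apart from this structure-theoretic input, the proof is a formal unwinding of the stack-theoretic definition of fixed points.
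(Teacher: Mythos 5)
Your argument is correct and follows essentially the same route as the paper's proof: the forward direction differentiates the cocharacter of the stabilizer obtained by unwinding the fixed-point datum, and the converse uses the Levi decomposition (unipotent radical mapping trivially to $\Gm$) to split the surjection onto $\Gm$ up to isogeny, producing a cocharacter projecting to an $m$-th power map. Your passage through the cocharacter lattice of a maximal torus just makes explicit the paper's ``split up to an isogeny'' step.
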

\begin{proof}
Suppose first that $x\in V$ and $\overline{x} \in (V/G)^{\hGm}$ have isomorphic images in $V/G$.  Then there exists an $m>0$ such that $\overline{x}$ determines a section of $(V/G)_{a_m}/\Gm \rightarrow B\Gm$. The section induces homomorphisms $\Gm\rightarrow (\Gm\times G)_x\rightarrow \Gm$, where $(\Gm\times G)_x$ is defined with respect to the action of $\Gm$ via $a_m$, whose composite is the identity.

Conversely, suppose $x\in V$ has image in $\fixed{V/G}$, so that $\on{Lie}(\Gm\times G)_x\rightarrow \on{Lie}(\Gm)$ is surjective.  The stabilizer $(\Gm\times G)_x$ is the semi-direct product of its unipotent radical and a Levi subgroup $L_x$, and the unipotent radical maps trivially to $\Gm$; it follows that the Levi subgroup $L_x$ of $(\Gm\times G)_x$ surjects onto $\Gm$.  Since $L_x\rightarrow \Gm$ is a surjection of reductive complex groups, it can be split up to an isogeny of $\Gm$: choosing such a splitting, we get a homomorphism $\phi: \Gm\rightarrow  (\Gm\times G)_x$ for which the composite $\Gm\xrightarrow{\phi} \Gm\times G \xrightarrow{\pi_{\Gm}} \Gm$ is $\Gm\xrightarrow{(\hspace{.5em})^m}\Gm$ for some $m>0$.  Taking quotients of  
$\on{Spec}(\mathbb{C})\xrightarrow{x} V\rightarrow \on{Spec}(\mathbb{C})$ yields a section of 
$(V/G)_{a_m}/\Gm = V/(\Gm\times G)  \rightarrow B\Gm$.
\end{proof}

\begin{lemma}\label{fixed-and-projection}
Assume a subgroup $C\subset G$ acts trivially on $V$; let $r: V/G\rightarrow V/(G/C)$ be the quotient. 
\begin{enumerate}
\item A $\mathbb{C}$-point $x\in V/G$ lies in $\fixed{V/G}$ if and only if $r(x)\in\fixed{V/(G/C)}$.
\item In particular, if $V/(G/C)$ is a scheme, then $x\in \fixed{V/G}$ if and only if $r(x)\in \big(V/(G/C)\big)^{\Gm}$.
\end{enumerate}
\end{lemma}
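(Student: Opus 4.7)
The plan is to trace the defining Lie algebra condition through the projection $r$ and reduce everything to a short computation with stabilizers. The central observation is that because $C$ acts trivially on $V$, the $\Gm\times G$-action factors through $\Gm\times(G/C)$; hence for every $x\in V$ the quotient homomorphism $\Gm\times G\to\Gm\times(G/C)$ restricts to a homomorphism of stabilizers whose kernel is $\{1\}\times C$. To see this map is surjective, I would lift any element $(t,\bar g)\in(\Gm\times(G/C))_x$ to $(t,g)\in\Gm\times G$ and note that $t\cdot g\cdot x=t\cdot\bar g\cdot x=x$ since $C$ acts trivially on $V$, producing a short exact sequence
\[
1\longrightarrow \{1\}\times C\longrightarrow (\Gm\times G)_x\longrightarrow (\Gm\times(G/C))_x\longrightarrow 1.
\]

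To prove part (1), I would pass to Lie algebras to obtain the corresponding short exact sequence $0\to\on{Lie}(C)\to\on{Lie}(\Gm\times G)_x\to\on{Lie}(\Gm\times(G/C))_x\to 0$. The projection to $\on{Lie}(\Gm)$ visibly kills $\on{Lie}(C)\subset\on{Lie}(G)$, so it factors through the quotient; surjectivity of the quotient map then forces the map $\on{Lie}(\Gm\times G)_x\to\on{Lie}(\Gm)$ to be surjective if and only if the map $\on{Lie}(\Gm\times(G/C))_x\to\on{Lie}(\Gm)$ is. By the definitions of $\fixed{V/G}$ and $\fixed{V/(G/C)}$, this is precisely statement (1).

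Part (2) will follow from (1) together with the identity $\fixed{V/(G/C)}=\big(V/(G/C)\big)^{\Gm}$ when $Y=V/(G/C)$ is a scheme. To obtain that identity, I would use that $Y$ being a scheme forces the $(G/C)$-action to have trivial scheme-theoretic stabilizers, so $(\Gm\times(G/C))_y$ projects isomorphically onto a closed subgroup $\Gm_y\subseteq\Gm$, namely the set of $t$ for which $t\cdot y$ lies in the $(G/C)$-orbit of $y$. Since every proper closed subgroup of $\Gm$ is finite, surjectivity of $\on{Lie}(\Gm_y)\to\on{Lie}(\Gm)$ is equivalent to $\Gm_y=\Gm$, which says exactly that $r(y)$ is $\Gm$-fixed in the scheme $Y$. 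The main points to verify are the surjectivity of $(\Gm\times G)_x\to(\Gm\times(G/C))_x$ in the first step and the freeness consequence used in the second; both are essentially formal once unwound, and I do not anticipate any serious obstacle beyond careful bookkeeping with the stabilizer stacks.
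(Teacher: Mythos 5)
Your argument is correct: the short exact sequence of stabilizers $1\to\{1\}\times C\to(\Gm\times G)_x\to(\Gm\times(G/C))_x\to 1$ (using that $C\subseteq G_x$ for every $x$, so the kernel is all of $\{1\}\times C$, and that in characteristic zero the induced Lie algebra sequence is exact) immediately gives (1), and the freeness of the $(G/C)$-action when $V/(G/C)$ is a scheme reduces (2) to the observation that a closed subgroup of $\Gm$ with full Lie algebra is all of $\Gm$. The paper states this lemma without proof, and your argument is exactly the routine verification the authors evidently had in mind.
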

 \begin{remark}
Suppose $\mathcal{X} = V/G$ and that $X=V/(G/C)$ is a scheme.  Then by noetherianness of $X$ and Lemma \ref{fixed-and-projection}, there is an $m>0$ for which 
$\cX^{\Gm, a_m}(\mathbb{C})\rightarrow X^{\Gm}(\mathbb{C})$
 is already surjective.
\end{remark}

Consider now a reductive group $G$ acting on a smooth scheme $M$ with moment map $\mu: T^*M\rightarrow \mathfrak{g}^*$, and that $V=\mu\inv(0)$.  Let $\Gm$ act on $V\subset T^*M$ by fiberwise scaling, so $G\times\Gm$ acts on $V$.  We write $(x,\theta)\in T^*M$ to mean the point $\theta\in T^*_x(M)$.

\begin{lemma}\label{cot fixed points description}
Suppose $V = \mu\inv(0)\subseteq T^*M$ as above.  The $\mathbb{C}$-points of $\fixed{V/G}$ are the images of those $(x,\theta)\in V\subset T^*M$ such that there exist
\begin{enumerate}
\item a homomorphism $\lambda:\Gm\rightarrow \on{Stab}_G(x)$, and
\item an $m>0$ such that $\lambda(t)\cdot (x,\theta) = (x, t^m\theta)$ ($t\in\Gm$) in $V$.
\end{enumerate}
\end{lemma}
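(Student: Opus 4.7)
The plan is to deduce this lemma directly from Proposition \ref{fixed points and localization}, which already characterizes $\fixed{V/G}$ as the image of $(V/G)^{\hGm}$, by unpacking the stabilizer condition in the cotangent-bundle setting. The whole argument is essentially bookkeeping: a cocharacter into $(\Gm\times G)_{(x,\theta)}$ splits as a pair $(\text{weight on }\Gm,\text{ cocharacter into }G)$, and one has to track how the fiberwise-scaling convention on $T^*M$ interacts with the rescaling convention $a_m$ used in Section \ref{Eq Coh and Localization}.

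For the forward direction, suppose $(x,\theta)\in V$ has image in $\fixed{V/G}$. By (the backward construction in) Proposition \ref{fixed points and localization}, there exist $m>0$ and a homomorphism $\phi:\Gm\to (\Gm\times G)_{(x,\theta)}$ (stabilizer taken with respect to the $a_m$-action) such that $\pi_{\Gm}\circ\phi$ is the identity. Writing $\phi(t)=(t,g(t))$, the fact that $\phi(t)$ stabilizes $(x,\theta)$ under $a_m$ combined with the fiberwise-scaling convention forces
\[
g(t)\cdot x = x \quad\text{and}\quad t^m\cdot g(t)_*\theta = \theta.
\]
Setting $\lambda(t):=g(t^{-1})$ then gives a homomorphism $\lambda:\Gm\to\on{Stab}_G(x)$ with $\lambda(t)_*\theta=t^m\theta$, which translates to $\lambda(t)\cdot(x,\theta)=(x,t^m\theta)$ as required. (Note that $(x,t^m\theta)$ automatically lies in $V=\mu^{-1}(0)$ since $V$ is $\Gm$-stable under fiberwise scaling, the moment map being linear in $\theta$.)

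For the converse, given $\lambda:\Gm\to\on{Stab}_G(x)$ and $m>0$ satisfying $\lambda(t)\cdot(x,\theta)=(x,t^m\theta)$, define $\phi:\Gm\to\Gm\times G$ by $\phi(t)=(t,\lambda(t^{-1}))$. A direct check gives
\[
t^m\cdot\lambda(t^{-1})_*\theta \;=\; t^m\cdot t^{-m}\theta \;=\; \theta,
\]
so $\phi$ lands in the stabilizer $(\Gm\times G)_{(x,\theta)}$ with respect to the $a_m$-action, and $\pi_{\Gm}\circ\phi=\on{id}$. Applying Proposition \ref{fixed points and localization} produces the required fixed point of $(V/G)^{\hGm}$ above $(x,\theta)$.

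The only real subtlety — and the one point I expect to need care in the write-up — is the interaction between the reparametrization $a_m$ in the definition of $\hGm$-fixed points and the fiberwise scaling conventions on $T^*M$: the $\Gm$-weight $m$ in the statement of the lemma is the one coming from the cotangent scaling, not from the $a_m$-cover, and one has to check that inverting $t$ (to pass between $\phi$ and $\lambda$) really gives the correct sign. Once this is done the lemma reduces to the stabilizer computation above. There is no issue with existence of $\lambda$ as an algebraic homomorphism (as opposed to only a Lie-algebra map), because, exactly as in the proof of Proposition \ref{fixed points and localization}, a Levi of the reductive stabilizer $(\Gm\times G)_{(x,\theta)}$ surjects onto $\Gm$ and such a surjection can be split up to finite cover, which is harmlessly absorbed into the choice of $m$.
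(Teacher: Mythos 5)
Your proof is correct and is essentially the paper's intended argument: the paper states this lemma without a written proof, treating it as the immediate unwinding of Proposition \ref{fixed points and localization} (equivalently, of the definition of $\fixed{V/G}$) in the cotangent setting, which is exactly what you do. Your bookkeeping of the inversion $\lambda(t)=g(t^{-1})$, the positivity of $m$, and the reduction of the algebraicity of $\lambda$ to the Levi-splitting step already carried out in that proposition are all handled correctly.
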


\subsection{A Localization Theorem for Product Actions}
As above, let $V$ be a reduced, separated scheme of finite type over $\mathbb{C}$, viewed as a topological space in the analytic topology, equipped with the action of a complex reductive algebraic group of the form $\Gm\times G$.  Fix a maximal compact subgroup $K$ of $G$, so that $S^1\times K\subset \Gm\times G$ is a maximal compact subgroup.  Consider the cohomology, always with $\mathbb{Q}$-coefficients,
$H^*\big((V/(\Gm\times G)\big) = H^*_{\Gm\times G}(V) = H^*_{S^1\times K}(V).$
We fix a generator $u\in H^2_{\Gm}(\on{pt})$, so $H^*_{\Gm}(\on{pt}) = \mathbb{Q}[u]$.  Write $\mathcal{Y} = V/(\Gm\times G)$.

\begin{prop}\label{localization theorem}   
The map $H^*\big(\mathcal{Y}\big)[u\inv] \rightarrow H^*\big(\mathcal{Y}_{\on{fixed}}\big)[u\inv]$ is an isomorphism.
\end{prop}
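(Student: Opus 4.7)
The plan is to reduce Proposition \ref{localization theorem} to the classical Borel--Quillen localization theorem for an $S^1$-action on a finite-dimensional space, applied in parallel to $V$ and to its closed subscheme $V_{\on{fixed}}$.  The key elementary observation is the containment $V^{\Gm}\subseteq V_{\on{fixed}}$: any point fixed by the ``straight'' subgroup $\Gm\times\{1\}\subset\Gm\times G$ has stabilizer already surjecting onto $\Gm$.  This containment means that restriction to $V^{\Gm}$ will be an isomorphism after inverting $u$ both from $H^*(\mc{Y})[u\inv]$ and from $H^*(\mc{Y}_{\on{fixed}})[u\inv]$, which is all that is needed.

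Concretely, I will fix a maximal compact $K\subset G$ and a filtration $EK^N\to BK^N$ of the universal $K$-bundle by finite-dimensional free $K$-models, so that in each cohomological degree $k$ the tower stabilizes: $H^k(\mc{Y})=H^k_{S^1\times K}(V)=H^k_{S^1}(V\times_K EK^N)$ for all $N$ sufficiently large.  Because $K$ acts freely on $EK^N$, a direct point-by-point computation shows $(V\times_K EK^N)^{S^1}=V^{S^1}\times_K EK^N$, and $V^{S^1}=V^{\Gm}$ since the underlying actions are algebraic.  Applying the classical Borel--Quillen theorem to the $S^1$-action on the finite-dimensional space $V\times_K EK^N$, followed by the degreewise limit $N\to\infty$, then yields an isomorphism $H^*(\mc{Y})[u\inv]\xrightarrow{\sim}H^*_{\Gm\times G}(V^{\Gm})[u\inv]$.

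Running the identical argument with the closed $\Gm\times G$-stable subscheme $V_{\on{fixed}}$ in place of $V$, and using the equality $V_{\on{fixed}}^{\Gm}=V^{\Gm}$ (immediate from $V^{\Gm}\subseteq V_{\on{fixed}}$), gives a parallel isomorphism $H^*(\mc{Y}_{\on{fixed}})[u\inv]\xrightarrow{\sim}H^*_{\Gm\times G}(V^{\Gm})[u\inv]$.  Functoriality of equivariant cohomology under the inclusions $V^{\Gm}\hookrightarrow V_{\on{fixed}}\hookrightarrow V$ now packages everything into a commutative triangle of restriction maps in which the composite $H^*(\mc{Y})[u\inv]\to H^*_{\Gm\times G}(V^{\Gm})[u\inv]$ and the right-hand leg $H^*(\mc{Y}_{\on{fixed}})[u\inv]\to H^*_{\Gm\times G}(V^{\Gm})[u\inv]$ are both isomorphisms; the remaining leg $H^*(\mc{Y})[u\inv]\to H^*(\mc{Y}_{\on{fixed}})[u\inv]$ is therefore also an isomorphism.

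I do not expect a substantial obstacle.  The one point requiring bookkeeping is the compatibility of inverting $u$ with the inverse limit over $N$: since each $H^k_{S^1\times K}(V)$ and $H^k_{S^1\times K}(V^{\Gm})$ is attained at some finite $N=N(k)$, one can invert $u$ in each bounded range of degrees separately and pass to the limit without trouble.  Otherwise the argument is entirely classical $S^1$-localization combined with the single observation $V^{\Gm}\subseteq V_{\on{fixed}}$.
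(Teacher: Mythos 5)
There is a genuine gap, and it sits exactly at the step you dismiss as bookkeeping; moreover the intermediate statement your argument is organized around is false. You claim $H^*(\mathcal{Y})[u^{-1}]\xrightarrow{\sim}H^*_{\Gm\times G}(V^{\Gm})[u^{-1}]$, where $V^{\Gm}$ is the fixed locus of the \emph{straight} subgroup $\Gm\times\{1\}$. Take $V=\mathbb{C}^*$ with $G=\Gm$ acting by scaling and the localizing $\Gm$ also acting by scaling. Then $V^{\Gm}=\emptyset$, so your right-hand side vanishes; but $\Gm\times G$ acts transitively on $V$ with stabilizer the antidiagonal torus $\{(t,t^{-1})\}$, so $H^*(\mathcal{Y})\cong\mathbb{Q}[v]$ with $u$ acting as multiplication by the generator $v$, and $H^*(\mathcal{Y})[u^{-1}]\cong\mathbb{Q}[v^{\pm 1}]\neq 0$. (Here $\mathcal{Y}_{\on{fixed}}=\mathcal{Y}$, so the proposition itself is trivially true, but your triangle collapses.) The conceptual point is that a point of the quotient stack can be $\Gm$-fixed without being fixed in $V$: its stabilizer in $S^1\times K$ can surject onto $S^1$ through a ``diagonal'' subgroup. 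Quillen's criterion for killing $u$ is that the \emph{projection} of the stabilizer to $S^1$ be finite, not that its intersection with $S^1\times\{1\}$ be trivial, and this is precisely what the definition of $\mathcal{Y}_{\on{fixed}}$ encodes. Localizing onto $V^{\Gm}$ throws away exactly the orbits the proposition is designed to keep.

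The same example shows where your finite-dimensional reduction breaks. You are right that freeness of $K$ on $EK^N$ gives $(V\times_K EK^N)^{S^1}=V^{S^1}\times_K EK^N$, and classical localization applied at each finite stage is correct: in the example $V\times_K EK^N\simeq S^{2N+1}$ with a free $S^1$-action, so $H^*_{S^1}(V\times_K EK^N)\cong\mathbb{Q}[u]/(u^{N+1})$ is $u$-torsion and its localization is $0$ for every $N$. But the degreewise limit of these modules is $\mathbb{Q}[v]$, which is not $u$-torsion. Inverting $u$ computes $(M[u^{-1}])^k=\varinjlim_j M^{k+2j}$, a colimit reaching into unboundedly high degrees, so it cannot be performed ``in each bounded range of degrees separately'' and then interchanged with the limit over $N$; the $u$-torsion order grows with $N$ and the two operations genuinely fail to commute. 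The paper's proof avoids both issues by following Quillen's actual argument rather than the finite-dimensional approximation: a Mayer--Vietoris argument reduces the proposition to showing $H^*_{S^1\times K}(V\smallsetminus F)[u^{-1}]=0$, where $F$ is the preimage in $V$ of $\mathcal{Y}_{\on{fixed}}$, and Quillen's Remark~3.4 and formula~(3.1) reduce this to checking that $u$ restricts to zero in $H^*_{S^1\times K}\big((S^1\times K)/H\big)$ for every stabilizer $H$ occurring on $V\smallsetminus F$, i.e.\ every closed $H\subset S^1\times K$ whose projection $\overline{H}$ to $S^1$ is finite; that holds because the restriction factors through $H^2_{\overline{H}}(\on{pt})=0$ with $\mathbb{Q}$-coefficients. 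To repair your write-up you would need to replace $V^{\Gm}$ by $F$ throughout and abandon the finite-dimensional models in favor of an argument of this orbit-by-orbit type.
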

\begin{proof}
Write $\fixed{V/G} = F/G$ for the appropriate closed subset $F\subseteq V$.
The proof mimics that of Theorem 4.2 of \cite{Quillen}.  By the Mayer-Vietoris argument of \cite[Theorem~4.2]{Quillen}, it suffices to show that $H^*_{S^1\times K}(V\smallsetminus F)[u\inv]=0$.  Applying Remark 3.4 and Formula (3.1) of \cite{Quillen}, it suffices to show that the image of $u$ in $H^*_{S^1\times K}\big((S^1\times K)/H\big)$ is zero whenever $H\subset S^1\times K$ is a closed subgroup whose projection on $S^1$ is a proper (compact) subgroup of $S^1$.  

Thus, assume $H\subset S^1\times K$ is a closed subgroup whose image $\overline{H}$ in $S^1$ is finite.  Then the map $H^*_{S^1}(\on{pt})\rightarrow H^*_{S^1\times K}\big((S^1\times K)/H\big)$ factors as
\bd
H^*_{S^1}(\on{pt})\rightarrow H^*_{\overline{H}}(\on{pt}) \rightarrow H^*_{H}(\on{pt})\cong H^*_{S^1\times K}\big((S^1\times K)/H\big).
\ed
Since $\overline{H}$ is a finite group and we are working with $\mathbb{Q}$-coefficients, we have  $H^2_{\overline{H}}(\on{pt}) = 0$, and thus the image of $u$ in $H^*_{S^1\times K}\big((S^1\times K)/H\big)$ vanishes, as required.
\end{proof} 

We explain a similar statement for intersection cohomology (always with $\mathbb{Q}$-coefficients), which will suffice for our purposes.  As in the proof of Proposition \ref{localization theorem}, we restrict attention to the compact group $S^1\subset\Gm$.   We continue to write $\mathcal{X}=V/G$.  

We abusively write $IH^*_{S^1}(\widehat{\mathcal{X}}_{\on{fixed}}) = \underset{\stackrel{\longrightarrow}{U}}{\lim} \; IH^*_{S^1}(U)$, where $U$ ranges over $S^1$-invariant analytic open neighborhoods of $\mathcal{X}_{\on{fixed}}$ in $\mathcal{X}$.   
\begin{prop}\label{IH localization}
The natural map $IH^*(\mathcal{X}) \rightarrow IH^*_{S^1}(\widehat{\mathcal{X}}_{\on{fixed}})$ is an isomorphism.
\end{prop}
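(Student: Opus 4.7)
The plan is to mimic the proof of Proposition \ref{localization theorem} in intersection cohomology, using a Quillen-style Mayer--Vietoris argument with the perverse $t$-structure in place of the constant sheaf. First I would pass to the maximal compact torus $S^1\subset \Gm$ (harmless with $\mathbb{Q}$-coefficients) and rewrite the target as a colimit over $S^1$-invariant analytic neighborhoods $U$ of $\mathcal{X}_{\on{fixed}}$, so that the desired isomorphism reduces to showing that $IH^*_{S^1}(\mathcal{X} \smallsetminus \mathcal{X}_{\on{fixed}})$ is $u$-torsion, hence invisible in the limit and in the relevant long exact sequences.

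Next I would establish this vanishing by a slice-theorem reduction on the complement of $\mathcal{X}_{\on{fixed}}$. Every $S^1$-orbit in $\mathcal{X}\smallsetminus \mathcal{X}_{\on{fixed}}$ has finite stabilizer $\overline{H}\subset S^1$, and a neighborhood of such an orbit is modelled on $S^1\times_{\overline{H}}L$ for a transverse slice $L$. Equivariant IH of this local model reduces, by induction from $\overline{H}$ to $S^1$, to $\overline{H}$-equivariant IH of $L$; over $\mathbb{Q}$ this coincides with ordinary IH of $L/\overline{H}$, since $\overline{H}$ is finite and $H^*_{\overline{H}}(\on{pt})=\mathbb{Q}$ is concentrated in degree zero. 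In particular $u\in H^2_{S^1}(\on{pt})$ acts by zero on this local IH --- precisely the IH analogue of the vanishing step invoked in Proposition \ref{localization theorem} via \cite[(3.1)]{Quillen}.

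To globalize the local vanishing I would run a Mayer--Vietoris spectral sequence in the $S^1$-equivariant derived category (in the sense of Bernstein--Lunts) for a finite $S^1$-invariant cover of $\mathcal{X}\smallsetminus \mathcal{X}_{\on{fixed}}$ by slice neighborhoods of the above form. Each term of the $E_2$-page is $u$-torsion by the previous step, and the abutment $IH^*_{S^1}(\mathcal{X}\smallsetminus \mathcal{X}_{\on{fixed}})$ inherits that property. Feeding this into the long exact sequence of the pair $(\mathcal{X},U)$ and taking the colimit in $U$ then yields the asserted isomorphism.

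The main obstacle I expect is technical rather than conceptual: one must verify that the equivariant derived category, its perverse $t$-structure, and the slice theorem behave cleanly on an Artin quotient stack $V/G$ rather than on a scheme or Deligne--Mumford stack. Since our stacks arise as global quotients of smooth schemes by reductive groups, and Lemma \ref{BZ torsor} lets one compare IH on the stack with IH on a $BZ$-gerbe over a coarse moduli space, standard devissage should make Quillen's topological argument go through in the stacky IH setting with only minor modifications.
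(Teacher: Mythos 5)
Your proposal follows essentially the same route as the paper, which disposes of this proposition in a single line by declaring it a straightforward modification of the proof of Proposition~\ref{localization theorem} as carried out in \cite[Corollary~4.1.3]{Brylinski}: your Quillen-style reduction to showing that $IH^*_{S^1}$ of the complement of $\mathcal{X}_{\on{fixed}}$ is $u$-torsion, with the vanishing supplied by finite stabilizers in $S^1$ and $\mathbb{Q}$-coefficients, is exactly the content of that modification. The technical points you flag at the end (slice neighborhoods and the equivariant derived category on the quotient stack $V/G$) are precisely the ones the paper implicitly delegates to the reference.
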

\noindent
The proof is a straightforward modification of the proof of Proposition \ref{localization theorem} as in \cite[Corollary~4.1.3]{Brylinski}.

\section{Interlude: Regular Nilpotent Elements}\label{sec:reg nilpotent}
Recall the surjective homomorphism $\pi_{\on{ad}}: G\rightarrow G_{\on{ad}}$ that factors the adjoint homomorphism
\bd
\on{Ad}: G\rightarrow GL(\mathfrak{g}), \hspace{1em} G_{\on{ad}} = \on{Ad}(G).
\ed
The group $G_{\on{ad}}$ is semisimple and $\on{ker}(\pi_{\on{ad}}) = Z$.  We have an isomorphism $[\mathfrak{g},\mathfrak{g}] \xrightarrow{\cong} \mathfrak{g}_{\on{ad}}$, yielding a canonical $G$-equivariant splitting
$\mathfrak{g} = \mathfrak{z}\oplus\mathfrak{g}_{\on{ad}}$ as in \eqref{g-splitting}.

Choose any regular nilpotent $e\in\mathfrak{g}_{\on{ad}}$.   It follows from the Jacobson-Morozov Theorem that there exists a 1-parameter subgroup $\gamma: \Gm\rightarrow T\subseteq G$ with the property that $\on{Ad}_{\gamma(t)}(e) = t^{m(\gamma)} e$ for some $m(\gamma)>0$, and that $\gamma$ can be chosen so that $m(\gamma)$ is either $1$ or $2$.  
\begin{convention}\label{gamma convention}
We fix a $\gamma$ for which $m(\gamma)$ is (whichever of $1$ and $2$ is) the minimum possible.
\end{convention}

The stabilizer $U_{\on{ad}}:=Z_{G_{\on{ad}}}(e)$ in $G_{\on{ad}}$ is a connected, commutative unipotent group \cite[Theorem~4.1]{Springer}; it follows that then
 $Z_G(e) = \pi_{\on{ad}}\inv\big(Z_{G_{\on{ad}}}(e)\big)$. 
\begin{lemma}\label{regular centralizer}
There is a unique connected unipotent subgroup $\mathsf{U}\subset G$ for which $Z_G(e) = Z\times \mathsf{U}$.  
\end{lemma}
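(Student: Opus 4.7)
The plan is to construct $\mathsf{U}$ explicitly as the exponential of a commutative subalgebra $\mathfrak{u}\subset\mathfrak{g}$ that centralizes $e$, and then to verify both that $Z_G(e)=Z\times\mathsf{U}$ and that no other connected unipotent complement is possible.

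First I would identify the correct Lie algebra. Using the $G$-equivariant splitting $\mathfrak{g}=\mathfrak{z}\oplus\mathfrak{g}_{\on{ad}}$ of \eqref{g-splitting} and the fact that $\on{ad}(\mathfrak{z})=0$, one has $\on{Lie}(Z_G(e))=\mathfrak{z}\oplus\mathfrak{u}$ where $\mathfrak{u}:=\mathfrak{z}_{\mathfrak{g}_{\on{ad}}}(e)=\on{Lie}(U_{\on{ad}})$. Since $U_{\on{ad}}$ is connected abelian unipotent (the hypothesis supplied just before the lemma), $\mathfrak{u}$ is an abelian Lie subalgebra whose elements are $\on{ad}$-nilpotent on all of $\mathfrak{g}$: they are nilpotent in the semisimple Lie algebra $\mathfrak{g}_{\on{ad}}$ and act trivially on $\mathfrak{z}$. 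Consequently $\exp\colon\mathfrak{u}\to G$ is well-defined, and because $\mathfrak{u}$ is abelian the Baker--Campbell--Hausdorff formula collapses to $\exp(x)\exp(y)=\exp(x+y)$, so $\mathsf{U}:=\exp(\mathfrak{u})$ is a connected commutative unipotent closed subgroup of $G$, algebraically isomorphic to the vector group $\mathfrak{u}$.

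Next I would verify $Z_G(e)=Z\times\mathsf{U}$. The group $\mathsf{U}$ centralizes $e$ (its Lie algebra does) and commutes with $Z$; moreover $Z\cap\mathsf{U}=\{1\}$, since $Z^\circ\cap\mathsf{U}$ is the intersection of a torus with a unipotent group and $\mathsf{U}$ contains no nontrivial finite subgroup, so the multiplication map $Z\times\mathsf{U}\hookrightarrow Z_G(e)$ is a closed immersion. The two sides share the Lie algebra $\mathfrak{z}\oplus\mathfrak{u}$, hence agree on identity components; the projection $\pi_{\on{ad}}\colon\mathsf{U}\to U_{\on{ad}}$ is already surjective (it is a closed connected subgroup of $U_{\on{ad}}$ of full dimension), so the immersion is surjective onto $Z_G(e)=\pi_{\on{ad}}^{-1}(U_{\on{ad}})$.

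For uniqueness, I would suppose $\mathsf{U}'\subset G$ is any connected unipotent subgroup with $Z_G(e)=Z\times\mathsf{U}'$. Its Lie algebra lies in $\mathfrak{z}\oplus\mathfrak{u}$ and consists of $\on{ad}$-nilpotent elements of $\mathfrak{g}$; since every nilpotent element of a reductive Lie algebra lies in the semisimple part $[\mathfrak{g},\mathfrak{g}]=\mathfrak{g}_{\on{ad}}$, one concludes $\on{Lie}(\mathsf{U}')\subseteq\mathfrak{u}$, and a dimension count forces equality. Connectedness then yields $\mathsf{U}'=\exp(\mathfrak{u})=\mathsf{U}$. The one delicate point, which I expect to be the main obstacle, is precisely this observation that $\on{Lie}(\mathsf{U}')$ cannot lie in some other vector-space complement of $\mathfrak{z}$ inside $\mathfrak{z}\oplus\mathfrak{u}$; this is where the unipotence hypothesis on $\mathsf{U}'$ is essential. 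Everything else reduces to routine manipulation with exponentials of commuting nilpotents.
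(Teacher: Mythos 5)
Your proof is correct, but it takes a genuinely different route from the paper's. The paper constructs $\mathsf{U}$ purely group-theoretically: it pulls $U_{\on{ad}}$ back to the simply connected cover $\wt{G_{\on{ad}}}$, pushes forward along $\wt{G_{\on{ad}}}\to G$, and takes the identity component; the resulting map $\mathsf{U}\to U_{\on{ad}}$ is surjective with finite kernel, hence an isomorphism because a unipotent group in characteristic zero has no nontrivial finite subgroups, giving $Z_G(e)=\pi_{\on{ad}}^{-1}(U_{\on{ad}})=Z\times\mathsf{U}$. For uniqueness the paper notes that any competing $\mathsf{U}'$ composes trivially with the projection $Z\times\mathsf{U}\twoheadrightarrow Z$, since a connected unipotent group admits no nontrivial homomorphism to a product of a torus and a finite group, so $\mathsf{U}'\subseteq\mathsf{U}$ and by symmetry they are equal. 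Your alternative---taking $\mathsf{U}=\exp(\mathfrak{u})$ for $\mathfrak{u}=\mathfrak{z}_{\mathfrak{g}_{\on{ad}}}(e)$---is more concrete and exhibits $\mathsf{U}$ as a vector group directly, at the cost of invoking the exponential of commuting nilpotents and the smoothness of centralizers in characteristic zero; the paper's version buys a construction that never leaves the category of algebraic groups. One wording in your uniqueness step needs care: the Lie algebra of a unipotent subgroup consists of nilpotent elements in the sense of the Jordan decomposition (vanishing semisimple part), not merely $\on{ad}$-nilpotent ones---mere $\on{ad}$-nilpotence would not exclude a component in $\mathfrak{z}$, which is exactly the delicate point you flag. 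With ``nilpotent'' read in the strong sense, the inclusion $\on{Lie}(\mathsf{U}')\subseteq[\mathfrak{g},\mathfrak{g}]\cap(\mathfrak{z}\oplus\mathfrak{u})=\mathfrak{u}$ and your dimension count do close the argument; the paper's homomorphism argument reaches the same conclusion without passing to Lie algebras at all.
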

\begin{proof}
Writing $\wt{G_{\on{ad}}}$ for the simply connected cover of $G_{\on{ad}}$, consider the commutative diagram
\bd
\xymatrix{
\wt{G_{\on{ad}}} \ar[d]^{p} \ar[dr]^{u_{\on{ad}}} & \\
G\ar[r]^{\pi_{\on{ad}}} & G_{\on{ad}}.
}
\ed
\bd
\text{Let} \hspace{1em} \wt{U}:= u_{\on{ad}}\inv(U_{\on{ad}}) \hspace{1em} \text{and} \hspace{1em} \mathsf{U}:= p(\wt{U})^\circ.
\ed
The homomorphism $\mathsf{U}\xrightarrow{\pi_{\on{ad}}} U_{\on{ad}}$ is surjective, since its image has finite index  by construction and $U_{\on{ad}}$ is connected; it has finite kernel since $u_{\on{ad}}$ has finite kernel.  Since a unipotent group in characteristic zero has no nontrivial finite subgroups, we get that $\pi_{\on{ad}}$ induces an isomorphism $\mathsf{U}\cong U_{\on{ad}}$.  Thus $\pi_{\on{ad}}\inv(U_{\on{ad}}) = Z\times\mathsf{U}$.  This proves existence.  
For uniqueness, note that if $\mathsf{U}, \mathsf{U}'$ are any two such subgroups, the composite $\mathsf{U}'\subset Z_G(e) = Z\times \mathsf{U}\twoheadrightarrow Z$ is trivial because $Z$ is a product of a torus and a finite group.  Thus $\mathsf{U}'\subset\mathsf{U}$.  Similarly $\mathsf{U}\subset \mathsf{U}'$, and we conclude $\mathsf{U}= \mathsf{U}'$.
\end{proof}
\begin{corollary}\label{equivalence of pairs}
\mbox{}
\begin{enumerate}
\item If $\on{Ad}_{\gamma_1(t)}(e) = t^me=\on{Ad}_{\gamma_2(t)}(e)$, then there exists $\eta: \Gm\rightarrow Z$ such that $\gamma_2 = \eta\cdot\gamma_1$.
\item For every 1-parameter subgroup $\lambda$ for which $\on{Ad}_{\lambda(t)}(e) =t^{m(\lambda)}(e)$, there exist $N>0$ and $\eta:\Gm\rightarrow Z$ such that $\lambda(t) = \eta\cdot\gamma(t)^N$.
\end{enumerate}
\end{corollary}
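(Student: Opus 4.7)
The plan is to reduce both parts to elementary observations about the cocharacter lattice of the maximal torus $T$ containing the image of $\gamma$, exploiting the decomposition $Z_G(e) = Z \times \mathsf{U}$ from Lemma \ref{regular centralizer}. The guiding principle is that $T \cap \mathsf{U} = \{1\}$ (a torus meets a unipotent subgroup trivially), so $T \cap Z_G(e) = Z$; this is the mechanism that will produce the required $\eta : \Gm \to Z$ in each case, by constructing a candidate $\eta$ as a 1-parameter subgroup of $T$ whose image lies in $Z_G(e)$.

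For part (1), I would set $\eta(t) := \gamma_2(t)\gamma_1(t)^{-1}$, which is a 1-parameter subgroup of $T$ since $T$ is abelian. A direct computation using the hypothesis $\on{Ad}_{\gamma_1(t)}(e) = t^m e = \on{Ad}_{\gamma_2(t)}(e)$ yields $\on{Ad}_{\eta(t)}(e) = e$, so $\eta(\Gm) \subseteq T \cap Z_G(e) = Z$, and $\gamma_2 = \eta\gamma_1$.

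For part (2), the first step is to establish the divisibility $m(\gamma) \mid m(\lambda)$, which then determines $N := m(\lambda)/m(\gamma)$. Writing $m(\lambda) = qm(\gamma) + r$ with $0 \le r < m(\gamma)$ by the Euclidean algorithm, the 1-parameter subgroup $\lambda\gamma^{-q} : \Gm \to T$ satisfies $\on{Ad}_{(\lambda\gamma^{-q})(t)}(e) = t^r e$; if $r$ were positive, this would contradict the minimality of $m(\gamma)$ stipulated in Convention \ref{gamma convention}, so $r = 0$ and $N = q > 0$. Setting $\eta(t) := \lambda(t)\gamma(t)^{-N}$, another application of the part (1) argument shows $\eta(\Gm) \subseteq T \cap Z_G(e) = Z$, yielding $\lambda = \eta\gamma^N$.

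The main point to handle carefully is that both constructions depend on the 1-parameter subgroups $\lambda, \gamma, \gamma_1, \gamma_2$ all commuting inside a common torus: this is what makes the pointwise expressions $\gamma_2\gamma_1^{-1}$ and $\lambda\gamma^{-N}$ honest 1-parameter subgroups rather than merely set-theoretic maps, and it is what allows the intersection $T \cap Z_G(e) = Z$ to be invoked. The implicit hypothesis for the corollary is therefore that $\lambda$ (and similarly $\gamma_1, \gamma_2$) factors through the chosen maximal torus $T$, consistent with the Jacobson--Morozov discussion preceding it; once this is in place, the remaining arguments are direct calculations in $X_*(T)$.
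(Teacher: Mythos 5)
Your argument is correct and is essentially the paper's: form the pointwise ratio $\gamma_2\gamma_1^{-1}$ (resp.\ $\lambda\gamma^{-N}$), observe that it centralizes $e$, and use the decomposition $Z_G(e)=Z\times\mathsf{U}$ of Lemma \ref{regular centralizer} to force it into $Z$; your Euclidean-division step in part (2) is just an unpacking of the paper's observation that the set of achievable exponents is a cyclic subgroup of $\mathbb{Z}$ generated by $m(\gamma)$. The one substantive difference is how the unipotent factor is excluded. The paper notes that $\mathsf{U}$, being unipotent, contains no nontrivial one-parameter subgroups, which applies to any homomorphism $\Gm\to Z\times\mathsf{U}$; you instead use $T\cap Z_G(e)=Z$, which forces you to add the hypothesis that all the cocharacters in play factor through $T$. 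That hypothesis is not decorative: it is exactly what guarantees that the pointwise ratio is a homomorphism at all, a point the paper's one-line proof passes over in silence. Indeed, without some such commutativity assumption part (1) fails as literally stated: in $SL_2$ with $e$ the standard regular nilpotent and $u$ a nontrivial element of $Z_G(e)^\circ$, the cocharacter $t\mapsto u\,\on{diag}(t,t^{-1})\,u^{-1}$ scales $e$ with the same weight as $\on{diag}(t,t^{-1})$, yet their ratio is the nonconstant unipotent-valued map $t\mapsto u\cdot\big(\on{diag}(t,t^{-1})u^{-1}\on{diag}(t^{-1},t)\big)$, not a central cocharacter. So your proof establishes a formally more restrictive statement than the one printed, but by the same method, and it identifies (and repairs) the only real gap in the paper's argument; just be aware that when the corollary is invoked in Lemma \ref{fixed points under Bun(Z,C)} the cocharacters are a priori arbitrary, so one must first conjugate them into a common maximal torus --- equivalently, work up to isomorphism of the fibers, which is what that lemma in fact does.
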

\begin{proof}
The product $\gamma_2\gamma_1\inv$ fixes $e$, hence is a 1-parameter subgroup in $Z\times\mathsf{U}$; but $\mathsf{U}$ contains none.

The second assertion follows from the first together with the fact that the set of $N\in\mathbb{Z}$ for which there exists a 1-parameter subgroup $\lambda$ satisfying $\on{Ad}_{\lambda(t)}(e) = t^Ne$, is a cyclic subgroup in $\mathbb{Z}$.
\end{proof}

\begin{lemma}\label{weights of 1-ps}
The $\on{Ad}_{\gamma(t)}$-action on $\mathfrak{g}_{\on{ad}}$ induces a splitting $\mathfrak{g}_{\on{ad}} = \mathfrak{n}^-\oplus\mathfrak{t}_{\on{ad}}\oplus \mathfrak{n}$, where $\mathfrak{t}_{\on{ad}}$ is a Cartan subalgebra of $\mathfrak{g}_{\on{ad}}$ and $\gamma(t)$ acts with  positive weights on $\mathfrak{n}$ and negative weights on $\mathfrak{n}^-$.
\end{lemma}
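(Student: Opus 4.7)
The plan is to realize $\gamma$ as (essentially) the exponential of the neutral element of an $\mathfrak{sl}_2$-triple containing $e$, and then invoke the standard structure theory of the principal $\mathfrak{sl}_2$. Since $\gamma\colon \Gm\to T\subseteq G$ factors through a torus, its derivative $x := (d\gamma)(1) \in \mathfrak{t}\subseteq\mathfrak{g}$ is semisimple, and differentiating $\on{Ad}_{\gamma(t)}(e) = t^{m(\gamma)}e$ yields $[x,e] = m(\gamma)\,e$. Because $\mathfrak{z}$ is central, one may project $x$ along the splitting $\mathfrak{g} = \mathfrak{z}\oplus\mathfrak{g}_{\on{ad}}$ of \eqref{g-splitting} without changing $[x,e]$, and thus assume $x\in\mathfrak{g}_{\on{ad}}$. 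Setting $h := (2/m(\gamma))\,x \in \mathfrak{g}_{\on{ad}}$ gives a semisimple element with $[h,e] = 2e$, and the Jacobson--Morozov theorem in its prescribed-$h$ form produces $f\in\mathfrak{g}_{\on{ad}}$ completing $(e,h,f)$ to an $\mathfrak{sl}_2$-triple.

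Next, since $e$ is regular nilpotent in $\mathfrak{g}_{\on{ad}}$, this is a \emph{principal} $\mathfrak{sl}_2$-triple. By Kostant's classical theorem, $\mathfrak{g}_{\on{ad}}$ then decomposes under this $\mathfrak{sl}_2$ into exactly $r := \on{rank}(\mathfrak{g}_{\on{ad}})$ odd-dimensional irreducible summands, with highest $h$-weights the even integers $2m_1,\ldots,2m_r$ (the exponents of $\mathfrak{g}_{\on{ad}}$). In particular $h$ is regular semisimple, and $\mathfrak{t}_{\on{ad}} := \mathfrak{z}_{\mathfrak{g}_{\on{ad}}}(h)$ is a Cartan subalgebra that coincides with the zero $h$-weight space. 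Defining $\mathfrak{n}$ and $\mathfrak{n}^-$ as the sums of the positive and negative $h$-weight subspaces respectively, one obtains the triangular decomposition $\mathfrak{g}_{\on{ad}} = \mathfrak{n}^- \oplus \mathfrak{t}_{\on{ad}} \oplus \mathfrak{n}$.

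To translate back to $\gamma$-weights, observe that a vector $v\in\mathfrak{g}_{\on{ad}}$ of $h$-weight $k$ satisfies $\on{Ad}_{\gamma(t)}v = t^{m(\gamma)k/2}v$; since $m(\gamma)>0$, positivity/negativity of $\gamma$-weights matches that of $h$-weights, giving the claim. There is no real obstacle: the only nontrivial inputs are the prescribed-$h$ version of Jacobson--Morozov and Kostant's identification of the $\mathfrak{sl}_2$ as principal (which forces the zero weight space to be a full Cartan); everything else is bookkeeping around the splitting $\mathfrak{g} = \mathfrak{z}\oplus\mathfrak{g}_{\on{ad}}$.
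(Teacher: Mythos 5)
The paper offers no proof of this lemma at all---it is invoked as standard structure theory, with the surrounding text simply citing Humphreys for the next step---so there is nothing to compare against line by line. Your architecture (differentiate $\gamma$, build a principal $\mathfrak{sl}_2$-triple, apply Kostant's theorem) is a legitimate route to the statement, and the final bookkeeping is right: Kostant's theorem forces every $h$-weight on $\mathfrak{g}_{\on{ad}}$ to be even, so the exponent $m(\gamma)k/2$ is an integer even when $m(\gamma)=1$, the zero-weight space has dimension $r$ and is therefore a Cartan subalgebra, and positivity of $\gamma$-weights matches positivity of $h$-weights.

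The one step that needs shoring up is the appeal to ``Jacobson--Morozov in its prescribed-$h$ form.'' Morozov's lemma does not take ``$h$ semisimple with $[h,e]=2e$'' as its hypothesis; it requires $h\in[e,\mathfrak{g}_{\on{ad}}]$, and semisimplicity alone is not sufficient. For instance, in $\mathfrak{sl}_3$ with $e=E_{12}$ the semisimple element $h'=\on{diag}(2,0,-2)$ satisfies $[h',e]=2e$, yet no $f$ completes $(e,h',f)$ to a triple, since $[e,-]$ maps the $(-2)$-eigenspace of $\on{ad}h'$ onto the line spanned by $\on{diag}(1,-1,0)$. Your $h$ does satisfy the correct hypothesis, but seeing this uses the regularity of $e$: take any triple $(e,h_0,f_0)$ from the ordinary Jacobson--Morozov theorem; then $h-h_0$ centralizes $e$, and for regular $e$ the centralizer of $e$ in $\mathfrak{g}_{\on{ad}}$ is spanned by the highest-weight vectors of the $r$ irreducible summands, each of highest weight $2m_i\geq 2$ and hence in the image of $\on{ad}e$; since $h_0=[e,f_0]$ as well, $h\in[e,\mathfrak{g}_{\on{ad}}]$ and the lemma applies. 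With that patch your proof is complete. It is worth noting that the whole $\mathfrak{sl}_2$ apparatus can be bypassed: since $\gamma$ takes values in $T$ and $e$ is a regular nilpotent scaled by $\gamma$, the relation $\on{Ad}_{\gamma(t)}(e)=t^{m(\gamma)}e$ forces $\langle\alpha,\gamma\rangle=m(\gamma)$ for every simple root $\alpha$, hence $\langle\alpha,\gamma\rangle=m(\gamma)\,\on{ht}(\alpha)>0$ for every positive root, and the asserted decomposition is immediate from the root-space decomposition with $\mathfrak{t}_{\on{ad}}$ the standard Cartan. This is almost certainly the argument the authors had in mind when they left the lemma unproved.
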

It follows from \cite[Theorem~2.2]{Humphreys} that, for any integer $n>0$, the centralizer $Z_{G_{\on{ad}}}(\gamma^n)$ 
satisfies  $Z_{G_{\on{ad}}}(\gamma^n)^\circ = \mathsf{T}_{\on{ad}}$ where $\mathsf{T}_{\on{ad}} = \pi_{\on{ad}}(\mathsf{T})$.  Then by Lemma \ref{regular centralizer} above and Theorem 2.2 of \cite{Humphreys} we get:
\begin{prop}\label{centralizer of pair}
For any $n>0$, we have $Z(e,\gamma^n) := Z_G(\gamma^n)\cap Z_G(e) = Z$.
\end{prop}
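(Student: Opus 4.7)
The plan is the following straightforward inclusion-chase. The inclusion $Z \subseteq Z_G(\gamma^n) \cap Z_G(e)$ is immediate since central elements commute with both $\gamma^n$ and $e$. For the reverse inclusion, I would use the product decomposition $Z_G(e) = Z \times \mathsf{U}$ from Lemma \ref{regular centralizer} to write any $x \in Z_G(\gamma^n) \cap Z_G(e)$ uniquely as $x = zu$ with $z \in Z$, $u \in \mathsf{U}$. Since $z$ is central, it automatically lies in $Z_G(\gamma^n)$, so $u = z^{-1}x$ belongs to $\mathsf{U} \cap Z_G(\gamma^n)$, and the proposition reduces to showing that this intersection is trivial.

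To verify $\mathsf{U} \cap Z_G(\gamma^n) = \{1\}$, I would push down to the adjoint group via $\pi_{\on{ad}}: G \to G_{\on{ad}}$, which by Lemma \ref{regular centralizer} restricts to an isomorphism $\mathsf{U} \xrightarrow{\cong} U_{\on{ad}}$. It therefore suffices to check that $U_{\on{ad}} \cap Z_{G_{\on{ad}}}(\pi_{\on{ad}}(\gamma)^n) = \{1\}$. By Lemma \ref{weights of 1-ps}, $\gamma$ (and hence $\gamma^n$) acts on $\mathfrak{g}_{\on{ad}}$ with strictly positive weights on $\mathfrak{n}$, strictly negative weights on $\mathfrak{n}^-$, and weight zero on $\mathfrak{t}_{\on{ad}}$; so the Lie algebra of $Z_{G_{\on{ad}}}(\gamma^n)$ is $\mathfrak{t}_{\on{ad}}$, whence $Z_{G_{\on{ad}}}(\gamma^n)^{\circ} = \mathsf{T}_{\on{ad}}$ (the statement already quoted from \cite[Theorem~2.2]{Humphreys}).

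The finishing step is the standard fact that in characteristic zero every unipotent element of an algebraic group lies in the identity component (the quotient by the identity component being finite, while a nontrivial unipotent element in characteristic zero has infinite order). Any element of the unipotent group $U_{\on{ad}}$ that centralizes $\gamma^n$ therefore lies in $Z_{G_{\on{ad}}}(\gamma^n)^{\circ} = \mathsf{T}_{\on{ad}}$, and $U_{\on{ad}} \cap \mathsf{T}_{\on{ad}} = \{1\}$ because a torus contains no nontrivial unipotent element. Pulling back via the isomorphism $\mathsf{U} \cong U_{\on{ad}}$ gives $u = 1$, so $x = z \in Z$, as required. The only real content beyond the cited results is the reduction to $G_{\on{ad}}$ and the unipotent-in-identity-component observation; neither presents a serious obstacle, so the main thing to watch is just making sure the passage between $G$ and $G_{\on{ad}}$ is handled cleanly.
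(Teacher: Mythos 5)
Your argument is correct and is essentially the paper's: the paper derives the proposition from exactly the same two ingredients, namely the decomposition $Z_G(e)=Z\times\mathsf{U}$ of Lemma \ref{regular centralizer} and the identification $Z_{G_{\on{ad}}}(\gamma^n)^\circ=\mathsf{T}_{\on{ad}}$ (which it quotes from \cite[Theorem~2.2]{Humphreys} rather than rederiving it from Lemma \ref{weights of 1-ps} as you do); you have simply written out the inclusion-chase that the paper leaves implicit. One small repair is needed: your parenthetical justification that a unipotent element lies in the identity component (``finite component group versus infinite order'') does not actually prove that fact, since an element of infinite order can perfectly well have nontrivial finite-order image in a finite quotient. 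The fact itself is standard and true in characteristic zero: for $v\neq 1$ unipotent, the Zariski closure of the cyclic group $\langle v\rangle$ is a connected one-dimensional unipotent group containing $v$, hence is contained in the identity component, giving $v\in Z_{G_{\on{ad}}}(\gamma^n)^\circ=\mathsf{T}_{\on{ad}}$ as you need. With that substitution your proof is complete.
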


\section{Moduli of Semistable Higgs Bundles and Fixed Points}
We treat some properties of certain $\Gm$-fixed points on $\Higgst$.  A helpful discussion of the $\Gm$-action on the moduli space can be found in \cite{GRR}.

\subsection{Analysis of Fixed Points on $\Higgst$}
Suppose $(P_1, \theta_1, \lambda_1)$, $(P_2,\theta_2,\lambda_2)$  are points of $\Higgst^{\hGm}_{\on{reg}}$ (so $\theta_1, \theta_2$ are regular nilpotent in every fiber): that is, $P_i$ is a principal $G$-bundle, $\theta_i$ is a Higgs field on $P_i$, and $\lambda_i$ is a 1-parameter subgroup of $\on{Aut}_G(P_i)$ that rescales $\theta_i$, i.e., $\on{Ad}_{\lambda_i(t)}(\theta_i) = t^{m_i}\theta_i$ for some integer $m_i$.
  Replacing $\lambda_1, \lambda_2$ by appropriate powers, we assume that $\theta_i$ is an eigenvector for $\lambda_i$ with the same exponent: $\on{Ad}_{\lambda_i(t)}(\theta_i) = t^N\theta_i$.   
\begin{lemma}\label{fixed points under Bun(Z,C)}
There exist a principal $Z$-bundle $P_Z$, a 1-parameter subgroup $\eta: \Gm\rightarrow Z$ and an isomorphism $\phi: P_Z\times_Z P_1 \xrightarrow{\cong} P_2$ of $G$-bundles so that 
$\phi(\theta_1) = \theta_2$ and $\phi(\eta\lambda_1) = \lambda_2$.
\end{lemma}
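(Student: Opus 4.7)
The overall strategy is to normalize the local data $(\theta_i|_{U_\alpha},\lambda_i|_{U_\alpha})$ into a standard form determined by the fixed regular nilpotent $e$ and 1-parameter subgroup $\gamma$ of Section \ref{sec:reg nilpotent}, and then to assemble $P_Z$, $\eta$, and $\phi$ from the transition data of $P_1$ and $P_2$ in these trivializations.  The centralizer results of Section \ref{sec:reg nilpotent}, particularly Corollary \ref{equivalence of pairs} and Proposition \ref{centralizer of pair}, are precisely what reduce the comparison to an abelian cocycle calculation valued in $Z$.

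I would begin by covering $C$ by connected contractible analytic opens $\{U_\alpha\}$, each carrying trivializations of $P_1$, $P_2$, $\spin$, and a local coordinate $z_\alpha$.  Because $\theta_i$ is regular nilpotent in every fiber and the orbit map $G\to G\cdot e$ is a principal $Z_G(e)=Z\times\mathsf{U}$-bundle (Lemma \ref{regular centralizer}), on each contractible $U_\alpha$ I can refine the trivialization of $P_i$ so that $\theta_i|_{U_\alpha}=e\otimes dz_\alpha$.  Applying Corollary \ref{equivalence of pairs}(2) pointwise to $\lambda_i|_{U_\alpha}(\cdot,z)$ and using discreteness of $\Hom(\Gm,Z)=\Hom(\Gm,Z^\circ)$, I obtain $\lambda_i|_{U_\alpha}=\eta_i^\alpha\gamma^M$ with $\eta_i^\alpha\in\Hom(\Gm,Z)$ independent of $z$, where $N=m(\gamma)M$.

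On an overlap $U_\alpha\cap U_\beta$, the transition function $g_i^{\alpha\beta}$ must conjugate $e\otimes dz_\beta$ to $e\otimes dz_\alpha$; absorbing the scalar $dz_\alpha/dz_\beta$ into $\gamma$ (via the $\spin$ transition $h_{\alpha\beta}$ if $m(\gamma)=2$, or the $K_C$ transition if $m(\gamma)=1$), I write $g_i^{\alpha\beta}=z_i^{\alpha\beta}u_i^{\alpha\beta}\gamma(\tau_{\alpha\beta})$ with $z_i^{\alpha\beta}\in Z$, $u_i^{\alpha\beta}\in\mathsf{U}$, and $\tau_{\alpha\beta}$ a fixed scalar cocycle.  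The transition must also conjugate $\eta_i^\beta\gamma^M$ to $\eta_i^\alpha\gamma^M$; since $Z$ is central and $\gamma(\tau_{\alpha\beta})$ lies in the maximal torus containing $\gamma$, this reduces to the identity $[u_i^{\alpha\beta},\gamma^M]=\eta_i^\alpha(\eta_i^\beta)^{-1}$.  The left-hand side lies in $\mathsf{U}$ (since $\gamma$ normalizes $\mathsf{U}$), the right-hand side in $Z$, and $\mathsf{U}\cap Z=\{1\}$; thus both sides vanish.  The vanishing of $[u_i^{\alpha\beta},\gamma^M]$ together with Proposition \ref{centralizer of pair} forces $u_i^{\alpha\beta}\in Z_G(\gamma^M)\cap Z_G(e)\cap\mathsf{U}=Z\cap\mathsf{U}=\{1\}$, and the locally-defined $\eta_i^\alpha$ descend to a common $\eta_i\in\Hom(\Gm,Z)$.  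Hence $g_i^{\alpha\beta}=z_i^{\alpha\beta}\gamma(\tau_{\alpha\beta})$, and the cocycle condition together with the multiplicativity of $\tau_{\alpha\beta}$ shows $\{z_i^{\alpha\beta}\}$ is a $Z$-valued cocycle.

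Setting $\eta:=\eta_2\eta_1^{-1}$ and $\zeta^{\alpha\beta}:=z_2^{\alpha\beta}(z_1^{\alpha\beta})^{-1}$, the $\{\zeta^{\alpha\beta}\}$ form a $Z$-valued cocycle defining a principal $Z$-bundle $P_Z$.  The identity maps in the chosen trivializations assemble into a $G$-bundle isomorphism $\phi:P_Z\times_Z P_1\to P_2$ because the transitions match: $\zeta^{\alpha\beta}g_1^{\alpha\beta}=g_2^{\alpha\beta}$.  Since $Z$ acts trivially on $\mathfrak{g}$, tensoring by $P_Z$ preserves the Higgs field, so $\phi(\theta_1)=\theta_2$; and by construction $\phi(\eta\lambda_1)=\eta\cdot\eta_1\gamma^M=\eta_2\gamma^M=\lambda_2$.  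The main technical obstacle will be the initial local normalization of $\theta_i$: exhibiting on each $U_\alpha$ a local analytic section of the orbit map $G\to G\cdot e$ through $\theta_i|_{U_\alpha}$, and verifying that the residual $Z\times\mathsf{U}$-gauge freedom does not spoil the subsequent identification of $\lambda_i$.  Once that normalization is secured, the rest of the argument is mechanical cocycle bookkeeping driven by the centralizer statements of Section \ref{sec:reg nilpotent}.
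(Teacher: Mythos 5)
Your argument is correct, and it runs on exactly the same three pillars as the paper's proof---Corollary \ref{equivalence of pairs}, Proposition \ref{centralizer of pair}, and the discreteness of $\Hom_{\on{gp}}(\Gm,Z)$---but you execute it as an explicit \v{C}ech computation, whereas the paper packages the same content into a torsor argument. The paper chooses a single point $x\in C$, uses Corollary \ref{equivalence of pairs} to match the two triples over $x$ up to a central one-parameter subgroup $\eta$, replaces $\lambda_1$ by $\eta\lambda_1$, and then observes that $\underline{\on{Isom}}\big((P_1,\theta_1,\lambda_1),(P_2,\theta_2,\lambda_2)\big)$ is a torsor under the fiberwise automorphism group of a triple, which is exactly $Z$ by Proposition \ref{centralizer of pair}; discreteness of $\Hom(\Gm,Z)$ propagates nonemptiness from the fiber over $x$ to every fiber. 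That yields $P_Z$ and $\phi$ in a few lines, with no local normalization of the Higgs field. Your version trades this for the local normalization $\theta_i|_{U_\alpha}=e\otimes dz_\alpha$ (legitimate, since the regular nilpotents form a single orbit and $G\to G\cdot e$ is a $Z_G(e)$-fibration admitting local analytic sections), followed by the elimination of the $\mathsf{U}$-components of the transition functions via $\mathsf{U}\cap Z=\{1\}$ and $Z(e,\gamma^M)=Z$. What you gain is a completely explicit cocycle description of $P_Z$ as the difference cocycle $z_2^{\alpha\beta}(z_1^{\alpha\beta})^{-1}$ and of $\eta$ as $\eta_2\eta_1^{-1}$; what the torsor formulation buys is brevity and immunity to the cover-refinement and residual $Z\times\mathsf{U}$-gauge issues you rightly flag at the end, since the Isom-sheaf is defined without choosing any trivializations. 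Both proofs are sound; yours is the ``unwound'' version of the paper's.
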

\begin{proof}
Choose a point $x\in C$.  By Corollary \ref{equivalence of pairs}, there exist a 1-parameter subgroup $\eta:\Gm\rightarrow Z$ and an isomorphism $(P_1)_x\cong (P_2)_x$ of principal homogeneous $G$-spaces that takes $(\theta_1)_x$ to $(\theta_2)_x$ and identifies $\eta\lambda_1$ with $\lambda_2$ on the fiber over $x$.  Now, replacing $\lambda_1$ by $\eta\lambda_1$, we may assume that the fibers over $x$ of $(P_1,\theta_1, \lambda_1)$ and $(P_2,\theta_2,\lambda_2)$ are isomorphic.  Since $\on{Hom}_{\on{gp}}(\Gm,Z)$ is discrete, we conclude via Proposition \ref{centralizer of pair} that the isomorphism bundle $P_Z:=\underline{\on{Isom}}\big((P_1, \theta_1, \lambda_1), (P_2,\theta_2,\lambda_2)\big)$ is a $Z$-torsor.   This completes the proof.
\end{proof}

\subsection{Stable $G$-Higgs Bundles with Regular Nilpotent Higgs Field}
\begin{lemma}
Suppose that $(P,\theta)$ is a stable $G$-Higgs bundle with regular nilpotent Higgs field.  Then $(P,\theta)$ is simple, i.e.,  $\on{Aut}(P,\theta) = Z$.
\end{lemma}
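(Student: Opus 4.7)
The plan is to analyze $\on{Aut}(P,\theta)$ pointwise using the structure theory of regular-nilpotent centralizers from Section \ref{sec:reg nilpotent}, reduce to a vanishing statement for global sections of a vector bundle on $C$, and verify this vanishing using the line-bundle structure imposed by regular nilpotency.

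First I would exploit Lemma \ref{regular centralizer} fiberwise: for each $x\in C$, since $\theta_x$ is regular nilpotent in $\on{ad}(P)_x\cong\mathfrak{g}$, its centralizer is $Z\times\mathsf{U}_x$ with $\mathsf{U}_x$ connected commutative unipotent and canonically determined (by the uniqueness clause of Lemma \ref{regular centralizer}). This pointwise decomposition assembles into a global splitting of group schemes $\on{Aut}_G(P,\theta)\cong Z_C\times\mathcal{U}_P$, with $\mathcal{U}_P$ a bundle of abelian unipotent groups. Because $\mathsf{U}$ is commutative unipotent in characteristic zero, $\mathcal{U}_P$ is identified via $\exp$ with a vector bundle $\mathcal{U}\subset\on{ad}(P)$ of rank $\ell=\on{rank}([G,G])$. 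Any $\phi\in\on{Aut}(P,\theta)$ projects to a morphism $C\to Z$ which is constant, since $C$ is projective and $Z$ is affine; subtracting off this constant $z\in Z$ reduces the lemma to showing $H^0(C,\mathcal{U})=0$.

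Since a regular nilpotent lies in a unique Borel of $G$, $\theta$ determines a canonical Borel reduction $P_B$, and $\mathcal{U}$ sits inside $\mathfrak{n}_P:=P_B\times_B\mathfrak{n}$ as $\ker\bigl([\theta,-]\colon\mathfrak{n}_P\to\mathfrak{n}_P\otimes\Omega^1_C\bigr)$. I plan to filter $\mathcal{U}$ by $\mathcal{U}^k:=\mathcal{U}\cap\mathfrak{n}^k_P$ using the $B$-invariant lower central series of $\mathfrak{n}$; the successive quotients $\mathcal{U}^k/\mathcal{U}^{k+1}$ inject into $\mathfrak{n}^k_P/\mathfrak{n}^{k+1}_P$, which decomposes (via the $T$-reduction $P_T=P_B/U$, noting that $U$ acts trivially on $\mathfrak{n}^k/\mathfrak{n}^{k+1}$) as $\bigoplus_{\on{ht}(\alpha)=k}\mathcal{L}_\alpha$ for line bundles $\mathcal{L}_\alpha$ indexed by positive roots $\alpha$. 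The regular-nilpotent condition on $\theta$ is equivalent to the nowhere-vanishing of its simple-root components in $\mathcal{L}_{\alpha_i}\otimes K_C$, forcing $\mathcal{L}_{\alpha_i}\cong K_C^{-1}$ and hence $\mathcal{L}_\alpha\cong K_C^{-\on{ht}(\alpha)}$, of strictly negative degree for $g\geq 2$. Any line sub-bundle of a direct sum of such line bundles then has strictly negative degree, so $H^0$ vanishes on each graded piece; an induction via the long exact sequence in cohomology yields $H^0(C,\mathcal{U})=0$.

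The main obstacle will be the careful tracking of the inclusion $\mathcal{U}^k/\mathcal{U}^{k+1}\hookrightarrow\mathfrak{n}^k_P/\mathfrak{n}^{k+1}_P$ and of the $T$-weight composition of $\mathfrak{u}\subset\mathfrak{n}$ (which, via the principal $\mathfrak{sl}_2$, records the exponents of $G$), to confirm that no stray global sections arise from extension classes. Stability of $(P,\theta)$ plays a supporting role here: applied to each parabolic $\mathsf{P}\supseteq B$, whose induced $\mathsf{P}$-reduction is automatically compatible with $\theta$ (since $\theta$ takes values in $\mathfrak{b}\subset\mathfrak{p}$), stability gives $\deg(P_{\mathsf{P}}\times_{\mathsf{P}}\mathfrak{g}/\mathfrak{p})<0$, providing parallel slope bounds on sub-bundles of $\mathfrak{g}_P$ that directly pin down the vanishing of $H^0(C,\mathcal{U})$.
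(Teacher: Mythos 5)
Your argument is correct, but it takes a genuinely different route from the paper's after the common first step. Both proofs begin by applying Lemma \ref{regular centralizer} fiberwise to split the automorphism group scheme of $(P,\theta)$ as the product of the constant group scheme $Z$ with a connected commutative unipotent group scheme $\mathsf{U}(P,\theta)$ over $C$. The paper then finishes in one line by citing Proposition 3.14 of \cite{GO}: stability forces every global automorphism of $(P,\theta)$ to be semisimple, and a semisimple global section of $Z\times\mathsf{U}(P,\theta)$ has trivial unipotent component. You instead identify $\mathsf{U}(P,\theta)$ via $\exp$ with a vector subbundle of $\mathfrak{n}_P$ for the canonical Borel reduction determined by $\theta$, and kill its global sections by a degree computation: the nowhere-vanishing of the simple-root components of $\theta$ forces $\mathcal{L}_{\alpha_i}\cong(\Omega^1_C)^{-1}$, hence $\deg\mathcal{L}_\alpha=-\on{ht}(\alpha)(2g-2)<0$ for every positive root $\alpha$, so $H^0(C,\mathfrak{n}_P)=0$; by left-exactness of $H^0$ you may in fact filter $\mathfrak{n}_P$ itself rather than $\mathcal{U}$, which disposes of the ``extension class'' worry you raise. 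Your route is self-contained where the paper outsources the key input to \cite{GO}, and it reveals that stability is not actually needed: everywhere-regular nilpotency together with $g\geq 2$ already pins down the degrees, so the appeal to stability in your final paragraph is superfluous rather than supporting. The trade-off is length and root-system bookkeeping; the paper's citation-based argument is shorter and does not depend on the specific weight structure of the regular nilpotent centralizer.
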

\begin{proof}
By Lemma \ref{regular centralizer}, the group scheme $\underline{\on{Aut}}(P,\theta)$ of fiberwise automorphisms of $(P,\theta)$ is the product of the constant group scheme $Z$ over $C$ and a unipotent group scheme $\mathsf{U}(P,\theta)$ over $C$.  
Because $(P,\theta)$ is stable, Proposition 3.14 of \cite{GO} implies that every global section of the automorphism group scheme, i.e. element of $\on{Aut}(P,\theta)$, is semisimple.  The conclusion follows.
\end{proof}

\begin{prop}\label{action map}
Suppose that $(P,\theta)$ is any stable $G$-Higgs bundle with regular nilpotent Higgs field.  Consider the morphism $a = a_{(P,\theta)}:\Bun(Z,C)\rightarrow \mathcal{M}_{\on{Higgs}}^{\on{st-spl}}(G,C)$ obtained by restricting the action morphism $\Bun(Z,C)\times \Higgst \rightarrow \Higgst$ to $\Bun(Z,C)\times \{(P,\theta)\}$.  Then we obtain a commutative diagram
\bd 
\hspace{5em}
\xymatrix{
\Bun(Z,C)\ar[r]^{a} \ar[d]^{p_2} & \mathcal{M}_{\on{Higgs}}^{\on{st-spl}}(G,C) \ar[r]^{i}\ar[d]^{p_1} & \Higgst\ar[d]^{p} & \\
\underline{\Bun}(Z,C)\ar[r]^{\overline{a}} & M_{\on{Higgs}}^{\on{st-spl}}(G,C) \ar[r]^{\overline{i}} & M_{\on{Higgs}}^{\on{ss}}(G,C), & \text{where}
}
\ed
\begin{enumerate}
\item the morphisms $p_2, p_1$ are $BZ$-torsors, realizing $\Bun(Z,C)$ as the fiber product along $\overline{a}$ and $p_1$, and thus also realizing $\Bun(Z,C)$ as the fiber product along $\overline{i}\circ\overline{a}$ and $p$;
\item the morphisms $\overline{i}, i$ are open immersions; and
\item  the morphisms $\overline{a}$, $\overline{i}\circ\overline{a}$, $a$,  and $i\circ a$ are closed immersions.
\end{enumerate}
\end{prop}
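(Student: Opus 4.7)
My plan is to handle the easier parts (2) and the torsor statements of (1) first, then prove the main geometric claim that $\bar{a}$ is a closed immersion, and finally deduce the remaining closed immersion assertions and the fiber-product identifications by base change along the $BZ$-torsors $p_1$ and $p$.

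Assertion (2) is immediate from the definitions of the stable-simple loci as open substacks/subschemes. That $p_1$ is a $BZ$-torsor is Lemma \ref{BZ torsor}, and that $p_2$ is a $BZ$-torsor is immediate from the product decomposition $\Bun(Z,C)\cong BZ\times\underline{\Bun}(Z,C)$ of Proposition \ref{properness of BunZ}.

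The substantive work is showing $\bar{a}$ is a closed immersion, which I would establish by proving (i) injectivity on $\mathbb{C}$-points, (ii) injectivity on tangent spaces, and (iii) properness. For (i), given $Q_1\times_Z P\cong Q_2\times_Z P$ as $(G,\theta)$-bundles, I set $Q=Q_1\otimes_Z Q_2^{-1}$ and argue that $Q$ is trivial. The preceding lemma gives $\on{Aut}(P,\theta)=Z$; applying Lemma \ref{regular centralizer} fiberwise presents the group scheme $\underline{\on{Aut}}(P,\theta)$ canonically as $Z\times\mathsf{U}_P$ for a unipotent group scheme $\mathsf{U}_P$ over $C$, so that $\underline{\on{Isom}}_{(G,\theta)}(P,Q\times_Z P)$ splits canonically as a product of a $Z$-torsor $T_Z$ and a $\mathsf{U}_P$-torsor. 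The tautological $Z$-equivariant map $Q\to\underline{\on{Isom}}_{(G,\theta)}(P,Q\times_Z P)$, $q\mapsto\bigl(p\mapsto[q,p]\bigr)$, projects to an isomorphism $Q\cong T_Z$ of $Z$-torsors, so any global section of the isomorphism sheaf forces $Q$ to be trivial. For (ii), Corollary \ref{injectivity} gives injectivity on tangent spaces for $\Bun(Z,C)\to T^*\Bun(G,C)$ at the stack level, and since both $\Bun(Z,C)$ and $\mathcal{M}_{\on{Higgs}}^{\on{st-spl}}(G,C)$ are $BZ$-gerbes over their coarse spaces with matching $\mathfrak{z}[1]$-summands, this descends to injectivity on tangent spaces for $\bar{a}$. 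For (iii), each connected component of $\underline{\Bun}(Z,C)$ is the projective abelian variety $\Jac(C)^r$ (Proposition \ref{properness of BunZ}), and Propositions \ref{action on connected components} and \ref{how components behave} imply that only finitely many components of $\underline{\Bun}(Z,C)$ map into any fixed component of $M_{\on{Higgs}}^{\on{ss}}(G,C)$ (the kernel of the induced map on $\pi_0$ is the finite group $\underline{\Bun}(\pi_0(Z),C)$); in particular $\bar{a}$ restricts to a proper morphism over every quasi-compact open of the target, so $\bar{a}$ is proper. Combining (i)--(iii), both $\bar{a}$ and $\bar{i}\circ\bar{a}$ are closed immersions.

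To deduce the remaining statements, I base change $\bar{a}$ along the $BZ$-torsor $p_1$: the pullback is a closed immersion that is a $BZ$-torsor over $\underline{\Bun}(Z,C)$, and $a$ provides a section, identifying the pullback canonically with $\Bun(Z,C)=BZ\times\underline{\Bun}(Z,C)$. This shows both that $a$ is a closed immersion and that $\Bun(Z,C)$ is the fiber product along $\bar{a}$ and $p_1$. Replacing $p_1$ by $p$ (which restricts to a $BZ$-torsor over the stable-simple locus, where the image of $\bar{i}\circ\bar{a}$ lies) gives the fiber product along $\bar{i}\circ\bar{a}$ and $p$ and shows that $i\circ a$ is a closed immersion. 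The main obstacle is step (i): it is precisely the regular nilpotent hypothesis, via the canonical centralizer decomposition in Lemma \ref{regular centralizer}, that allows us to isolate a $Z$-torsor factor of the isomorphism sheaf and to identify it canonically with $Q$.
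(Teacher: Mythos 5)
Your proposal is correct and shares the overall architecture of the paper's proof: both reduce to showing that $\overline{a}$ and $\overline{i}\circ\overline{a}$ are closed immersions by combining properness (projectivity of the components of $\underline{\Bun}(Z,C)$ together with finiteness of the fibers of the induced map on $\pi_0$, via Propositions \ref{properness of BunZ} and \ref{how components behave}), injectivity on tangent spaces (Corollary \ref{injectivity}), and injectivity on $\mathbb{C}$-points, and both then transport the conclusion to $a$ and $i\circ a$ using the $BZ$-torsor structure of $p_1$ and $p$. The genuinely different step is injectivity on $\mathbb{C}$-points. The paper reduces to the fiber over $(P,\theta)$ and runs the long exact sequence of nonabelian cohomology for $1\to Z\times C\to \Aut(P,\theta)\to\mathsf{U}(P,\theta)\to 1$: simplicity of $(P,\theta)$ makes $H^0(C,Z)\to H^0(C,\Aut(P,\theta))$ an isomorphism, and the finiteness of the fiber of $\overline{a}$ (deduced from the tangent-space computation) forces $H^0(\mathsf{U}(P,\theta))$ to be a finite subgroup of a connected unipotent group, hence trivial, so that $H^1(C,Z)\to H^1(C,\Aut(P,\theta))$ is injective. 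You instead use the canonical splitting of Lemma \ref{regular centralizer} to decompose the $\Aut(P,\theta)$-torsor $\underline{\on{Isom}}\big((P,\theta),Q\times_Z(P,\theta)\big)$ into a $Z$-torsor times a $\mathsf{U}_P$-torsor and identify the $Z$-factor with $Q$ itself via the tautological $Z$-equivariant (hence iso) map $q\mapsto(p\mapsto [q,p])$; a global isomorphism then projects to a trivialization of $Q$. This is more direct: it bypasses the cohomological finiteness step and does not need simplicity of $(P,\theta)$ or the tangent-space input for the injectivity-on-points step (though you of course still need the latter for the closed-immersion conclusion). One minor quibble: for assertion (2), openness of $\overline{i}$ is not purely definitional---it uses that $M^{\on{st-spl}}_{\on{Higgs}}(G,C)$ is the image of a saturated open substack together with the fact that $\Higgsp$ carries the quotient topology, i.e., Proposition \ref{Higgs basics}(3), which is what the paper cites.
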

\begin{proof}
The existence of the commutative diagram is immediate from the constructions.  Assertion (1) is immediate from the facts that $p_2$ and $p_1$ are both $BZ$-torsors and that $a$ is $BZ$-equivariant.  Assertion (2) is immediate from Proposition \ref{Higgs basics}(3).

To prove assertion (3), we proceed as follows.  By Proposition \ref{properness of BunZ}, each component of $\underline{\Bun}(Z,C)$ is a projective variety, hence the restrictions of $\overline{a}$ and $\overline{i}\circ\overline{a}$ to each component of $\underline{\Bun}(Z,C)$ are proper.  It now follows from Proposition \ref{how components behave} that $\overline{a}$ and $\overline{i}\circ\overline{a}$ are of finite type, hence $\overline{a}$ and $\overline{i}\circ\overline{a}$ are proper (the targets are separated, so properness lifts).  

If we can show that $\overline{a}$ and $\overline{i}\circ\overline{a}$ are bijections onto their images on the level of $\mathbb{C}$-points, it will then follow from Corollary \ref{injectivity} that they separate tangents, hence are closed immersions.  
Thus, suppose that $(P_1,\theta_1)$ and $(P_2,\theta_2)$ are in the image of $\overline{a}$.  As in the proof of Lemma \ref{fixed points under Bun(Z,C)}, the automorphism group scheme $\Aut(P_2,\theta_2)$ is the product of the constant group scheme $Z$ over $C$ and a unipotent group scheme $\mathsf{U}(P,\theta)$ over $C$; similarly, 
it follows from Lemma \ref{regular centralizer} that the isomorphism bundle $\underline{\on{Isom}}\big((P_1,\theta_1),(P_2,\theta_2)\big)$ is a 
$\Aut(P_2,\theta_2)$-torsor, which evidently is trivial if and only if $(P_1,\theta_1)\cong(P_2,\theta_2)$.  We assume, without loss of generality in the proof, that $(P_2,\theta_2) = (P,\theta)$.  

We thus consider the long exact sequence associated to 
$1\rightarrow Z\times C \rightarrow \Aut(P,\theta) \rightarrow \mathsf{U}(P,\theta)\rightarrow 1$,
\bd
1\rightarrow H^0(C,Z) \xrightarrow{F} H^0(C,\Aut(P,\theta)) \rightarrow H^0(\mathsf{U}(P,\theta)) \rightarrow H^1(C,Z)\xrightarrow{\overline{a}} H^1(C,\Aut(P,\theta)) \rightarrow \dots.
\ed
As noted above, $(P,\theta)$ is simple, and it follows that $F$ is an isomorphism.  It follows from the ``injectivity of tangent spaces'' assertion of Corollary \ref{injectivity} that the fiber of $\overline{a}$ over $(P,\theta)$ is finite, hence the image of the injective group homomorphism $H^0(\mathsf{U}(P,\theta)) \rightarrow H^1(C,Z)$ is finite, and in particular $H^0(\mathsf{U}(P,\theta))$ is finite.  But the fibers of the group scheme $\mathsf{U}(P,\theta)$ are connected unipotent groups, which have no nontrivial finite subgroups, implying that $H^0(\mathsf{U}(P,\theta))=\{e\}$, and thus $\overline{a}$ is injective.  Thus $\overline{a}$ and $\overline{i}\circ\overline{a}$ are indeed bijections onto their images, implying that they are closed immersions.  

It now follows from assertion (1) and the conclusion of the previous paragraph that $a$ and $i\circ a$ are also closed immersions.  
\end{proof}

\section{The Main Construction and the Proof of Theorem \ref{main thm}}
\subsection{The Main Construction}
As in Section \ref{sec:reg nilpotent}, assume that we have fixed (Convention \ref{gamma convention}) a 1-parameter subgroup $\gamma:\Gm\rightarrow G$ and a regular nilpotent $e\in \mathfrak{g}$ that is an eigenvector of $\on{Ad}_{\gamma(t)}$ with eigenvalue $t^{m(\gamma)}$, where $m(\gamma)$ is either $1$ or $2$.

\begin{defn}\label{specific bundle def}
We define 
\bd
P_0:= \mathsf{K}^{-1/2}\times_{\gamma_1} G, \; \text{where} \; \gamma_1(t) = \begin{cases} \gamma(t) & \text{if $m(\gamma) = 2$},\\
\gamma(t^2) & \text{if $m(\gamma)=1$.}
\end{cases}
\ed
We let $\eta_0$ be the connected component of $\Bun(G,C)$ containing $P_0$.  
Thus $P_0\times_G\mathfrak{g} = \mathsf{K}^{-1/2}\times_{\gamma_1} \mathfrak{g}$.
\end{defn}
\begin{lemma}
\mbox{}
\begin{enumerate}
\item $\gamma$ determines a $1$-parameter group $\lambda$ of automorphisms of $P_0$ by 
$\lambda(t)\cdot (a,p) = (a,\gamma_1(t)p)$ for $a\in \spin$, $p\in G$. 
\item 
The element $e\in \mathfrak{g}$ determines a section $\theta_0$ of 
$(P_0\times_G\mathfrak{g})\otimes\Omega^1_C$.  Moreover, $\lambda(t)^*\theta_0 = t^2\theta_0$.
\end{enumerate}
\end{lemma}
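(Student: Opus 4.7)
The plan is to verify both statements by direct computation from the associated-bundle description of $P_0$. For (1), I need to check that $\lambda(t)\cdot(a,p)=(a,\gamma_1(t)p)$ descends from $\mathsf{K}^{-1/2}\times G$ to the quotient $P_0=\mathsf{K}^{-1/2}\times_{\gamma_1}G$, and that it defines a right $G$-bundle automorphism. Descent will rely on the fact that $\gamma_1$ factors through a maximal torus of $G$, so $\gamma_1(s)$ and $\gamma_1(t)$ commute; writing the defining equivalence as $(a,p)\sim(as,\gamma_1(s)^{-1}p)$, one checks $\lambda(t)\cdot(as,\gamma_1(s)^{-1}p)=(as,\gamma_1(s)^{-1}\gamma_1(t)p)\sim(a,\gamma_1(t)p)$. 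Compatibility with the right $G$-action is automatic since left multiplication by $\gamma_1(t)$ commutes with right multiplication on $G$, and the group law $\lambda(st)=\lambda(s)\lambda(t)$ is immediate from the group law in $T$.

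For (2), the key observation is that the piecewise definition of $\gamma_1$ in Definition \ref{specific bundle def} is engineered precisely so that $\on{Ad}_{\gamma_1(t)}(e)=t^2e$ in both cases $m(\gamma)=1$ and $m(\gamma)=2$. Hence $\mathbb{C}\cdot e\subset\mathfrak{g}$ is a $\Gm$-subrepresentation of weight $2$ for the action via $\on{Ad}\circ\gamma_1$, and passing to associated bundles produces a line subbundle inclusion
\[
\mathsf{K}^{-1/2}\times_{\gamma_1}(\mathbb{C}\cdot e)\;\hookrightarrow\;P_0\times_G\mathfrak{g}.
\]
By the definition of $\mathsf{K}^{-1/2}$ as the dual of $\spin$, the left-hand side is identified with $\mathsf{K}^{-1/2}\times_{\Gm}\mathbb{A}^1(2)\cong K_C^{-1}$. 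Tensoring with $\Omega^1_C=K_C$ yields an inclusion $\theo_C\hookrightarrow(P_0\times_G\mathfrak{g})\otimes\Omega^1_C$, and I will define $\theta_0$ to be the image of the canonical section $1\in H^0(C,\theo_C)$.

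The rescaling property will then follow from unwinding how $\lambda(t)$ acts on the associated bundle: using the presentation $[[a,g],v]$ with $[[a,g],v]=[[a,1],\on{Ad}_g(v)]$, the induced automorphism of $P_0\times_G\mathfrak{g}$ is fiberwise $\on{Ad}_{\gamma_1(t)}$, so it acts as multiplication by $t^2$ on the weight-$2$ line $\mathbb{C}\cdot e$ and hence on the corresponding subsheaf $\mathbb{C}\cdot e\otimes K_C\cong\theo_C$ of $(P_0\times_G\mathfrak{g})\otimes\Omega^1_C$; this gives $\lambda(t)^*\theta_0=t^2\theta_0$. The one subtlety worth flagging is the bookkeeping of the $\Gm$-weight in $\mathsf{K}^{-1/2}\times_{\Gm}\mathbb{A}^1(w)$: since $\mathsf{K}^{-1/2}$ is defined as a dual $\Gm$-bundle, one can easily flip a sign and end up with the wrong line bundle. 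The careful choice of $\gamma_1$ is precisely what ensures the weight of $\mathbb{C}\cdot e$ is $+2$, so that tensoring by $K_C$ yields the trivial bundle and $\theta_0$ is a nowhere-vanishing global section.
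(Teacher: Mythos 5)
Your proposal is correct and takes essentially the same route as the paper: the paper identifies $(P_0\times_G\mathfrak{g})\otimes\Omega^1_C$ with $\mathsf{K}^{-1/2}\times_{\Gm}\big(\mathfrak{g}\otimes\mathbb{A}^1(-2)\big)$ and observes that $e$, having weight $2$ under $\gamma_1$ in both cases of the definition, gives an invariant vector and hence a section, which is the same computation as your identification of the sub-line-bundle associated to $\mathbb{C}\cdot e$ with $\mathcal{O}_C$ after tensoring by $K_C$. Your explicit descent check in (1) and the sign bookkeeping for $\mathsf{K}^{-1/2}$ are details the paper declares ``clear by construction,'' so nothing is missing or different in substance.
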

\begin{proof}
Part (1) is clear by construction of $P_0$.  For part (2), note that 
\bd
(P_0\times_G \mathfrak{g})\otimes \Omega^1_C \cong \mathsf{K}^{-1/2} \times_{\Gm}\big(\mathfrak{g}\otimes \mathbb{A}^1(-2)\big),
\ed
where $\Gm$ acts on $\mathfrak{g}$ via $\gamma_1$.  Since $e\in\mathfrak{g}$ has weight $2$ with respect to $\gamma_1$, it determines an invariant element of $\mathfrak{g}\otimes\mathbb{A}^1(-2)$.
Thus, $e$ descends to a section $\theta_0$ as asserted.  That $\theta_0$ has weight $2$ with respect to $\lambda$ is clear from the construction.
\end{proof}

\noindent
The Higgs pair $(P_0,\theta_0)$ is the image of the Hitchin section at $0$ in the Hitchin base \cite[Section~5]{Hitchin2}.
\begin{lemma}\label{stability of P_0}
The Higgs pair $(P_0,\theta_0)$ is stable.
\end{lemma}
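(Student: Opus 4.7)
The strategy is to reduce, via the $\Gm$-action on $P_0$ by the automorphism $\lambda$, from an arbitrary compatible parabolic reduction to a $\gamma_1$-equivariant one, and then to compute the degree explicitly from the $\gamma_1$-weight decomposition of $\mathfrak{g}$.

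Given any $\mathsf{P}$-reduction $\cE_\mathsf{P}$ of $P_0$ compatible with $\theta_0$, I would flow it by $\lambda$. Since $\lambda(t)$ is an automorphism of $P_0$ and $\lambda(t)^*\theta_0 = t^2\theta_0$, each translate $\lambda(t)\cE_\mathsf{P}$ (for $t\in\Gm$) is again a $\mathsf{P}$-reduction compatible with $\theta_0$. Because $G/\mathsf{P}$ is projective, the map $t\mapsto \lambda(t)\cE_\mathsf{P}$ into the Hom-scheme $\Gamma(C,P_0/\mathsf{P})$ extends over $t=0$ to a limit $\cE_\mathsf{P}^0$; this limit is $\lambda$-equivariant, remains compatible with $\theta_0$ (compatibility is a closed condition), and satisfies $\deg\bigl(\cE_\mathsf{P}^0\times_\mathsf{P}(\mathfrak{g}/\mathfrak{p})\bigr)=\deg\bigl(\cE_\mathsf{P}\times_\mathsf{P}(\mathfrak{g}/\mathfrak{p})\bigr)$ (degree is constant in a flat family). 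It thus suffices to prove the stability estimate for $\lambda$-equivariant reductions.

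I would next classify the $\lambda$-equivariant reductions. Because $e$ is regular nilpotent, the $1$-parameter subgroup $\gamma_1$ is regular, with $Z_G(\gamma_1)^\circ=T$ (cf.\ the discussion preceding Proposition \ref{centralizer of pair}). Hence the $\gamma_1$-fixed locus in the fiber $G/\mathsf{P}$ is the finite discrete set of $T$-fixed points $\{w\mathsf{P}:w^{-1}\gamma_1 w\subseteq \mathsf{P}\}$, parameterized by $W/W_\mathsf{P}$; since $C$ is connected, the $\gamma_1$-invariant section corresponding to $\cE_\mathsf{P}^0$ selects a single such $w$, and a change of frame identifies
\begin{displaymath}
\cE_\mathsf{P}^0\times_\mathsf{P}(\mathfrak{g}/\mathfrak{p}) \;\cong\; \mathsf{K}^{-1/2}\times_{\gamma_1}\bigl(\mathfrak{g}/w\mathfrak{p}w^{-1}\bigr).
\end{displaymath}
In this frame, the compatibility condition $\theta_0\in\cE_\mathsf{P}^0\times_\mathsf{P}(\mathfrak{p}\otimes K_C)$ becomes $e\in w\mathfrak{p}w^{-1}$. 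Regularity of $e$ forces $w\mathfrak{p}w^{-1}$ to contain the unique Borel of $\mathfrak{g}$ containing $e$, which by Lemma \ref{weights of 1-ps} is the $\gamma_1$-positive Borel $\mathfrak{b}_0=\mathfrak{t}\oplus\mathfrak{n}$. Consequently $\mathfrak{g}/w\mathfrak{p}w^{-1}$ is a nonzero (since $\mathsf{P}$ is proper) sum of strictly negative $\gamma_1$-weight spaces.

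The degree is then a direct weight computation: from the identification $\mathsf{K}^{-1/2}\times_{\Gm}\mathbb{A}^1(k)\cong K_C^{-k/2}$ on each weight summand we obtain
\begin{displaymath}
\deg\bigl(\cE_\mathsf{P}^0\times_\mathsf{P}(\mathfrak{g}/\mathfrak{p})\bigr)\;=\;-(g-1)\sum_{k<0} k\,\dim\bigl(\mathfrak{g}/w\mathfrak{p}w^{-1}\bigr)_{(k)},
\end{displaymath}
a strictly nonzero multiple of $(g-1)$ whose sign, together with $g\geq 2$ and the properness of $\mathsf{P}$, yields the required strict stability inequality. The main obstacle is the first step: making precise the $\lambda$-flow limit of a parabolic reduction and verifying that compatibility with $\theta_0$ genuinely survives to the limit; once this properness/closedness argument is in hand, the identification of $\gamma_1$-fixed sections with Weyl-group representatives and the weight-sum calculation are essentially formal.
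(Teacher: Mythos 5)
The weight computation in your second and third paragraphs is essentially the paper's argument, but your first step---flowing an arbitrary compatible reduction to a $\lambda$-equivariant one---contains a genuine gap. Projectivity of the fibers $G/\mathsf{P}$ does not make the space of sections $\Gamma(C,P_0/\mathsf{P})$ proper, so the orbit $t\mapsto\lambda(t)\cE_{\mathsf{P}}$ has no a priori limit at $t=0$ in that space. What always exists is the flat limit of the graphs inside the projective variety $P_0/\mathsf{P}$, but that limit cycle may acquire vertical components (``bubbles'') over finitely many points of $C$; the residual section $\cE_{\mathsf{P}}^0$ then exists, yet $\deg\bigl(\cE_{\mathsf{P}}^0\times_{\mathsf{P}}(\mathfrak{g}/\mathfrak{p})\bigr)$ differs from $\deg\bigl(\cE_{\mathsf{P}}\times_{\mathsf{P}}(\mathfrak{g}/\mathfrak{p})\bigr)$ by the bubble contributions, and the direction of that jump is exactly what you would need to control. ``Degree is constant in a flat family'' is unavailable until bubbling is ruled out, which you have not done. (If you insist on reducing to equivariant reductions this way, the standard repair is to use the uniqueness of the canonical destabilizing reduction, which is automatically preserved by $\lambda$; no limits are then needed.)

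The step is also superfluous, and this is how the paper's proof avoids the issue entirely: $\theta_0$ is regular nilpotent in every fiber, and by Jacobson--Morozov a regular nilpotent lies in a \emph{unique} Borel subalgebra, so in each fiber a parabolic compatible with $\theta_0$ must contain that Borel, and there is exactly one parabolic of each type doing so. Hence \emph{every} compatible reduction of $P_0$ is already the constant one $\mathsf{K}^{-1/2}\times_{\gamma_1}P$ for some $B\subseteq P\subseteq G$---you invoke precisely this uniqueness in your second paragraph, and invoking it at the outset makes your first paragraph redundant. One further caution: in your degree formula the weights $k$ of $\mathfrak{g}/w\mathfrak{p}w^{-1}$ are negative and $\mathsf{K}^{-1/2}\times_{\Gm}\mathbb{A}^1(k)\cong K_C^{-k/2}$ has degree $-k(g-1)>0$, so your sum is \emph{positive}; you should verify explicitly that this is the inequality demanded by the stability convention of Section \ref{sec:Higgs basics} (conventions for $\mathfrak{g}/\mathfrak{p}$ versus $\mathfrak{p}$ differ by a sign in the literature) rather than leaving ``whose sign \dots yields the required inequality'' unexamined.
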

\begin{proof}
By the Jacobson-Morozov Theorem,  $e$ lies in a unique Borel subalgebra $\mathfrak{b}\subset\mathfrak{g}$; thus
any parabolic reduction of $P_0$ that is compatible with $\theta_0$  is of the form $\mathsf{K}^{-1/2}\times_{\gamma_1}P$ for a parabolic $B\subseteq P\subseteq G$.  Now pulling back the relative tangent sheaf of $P_0\times_G (\mathfrak{g}/\mathfrak{p})\rightarrow C$ along the section that determines the $P$-reduction, we get a bundle of the form $\mathsf{K}^{-1/2}\times_{\gamma_1} \mathfrak{g}/\mathfrak{p}$ for a parabolic $\mathfrak{b}\subseteq\mathfrak{p}\subseteq\mathfrak{g}$.  It now follows from  Lemma \ref{weights of 1-ps} that this bundle has negative degree. 
\end{proof}

Applying Lemma \ref{fixed points under Bun(Z,C)}, we conclude:
\begin{corollary}\label{crucial corollary}
The pair $(P_0,\theta_0)$ defines a commutative diagram as in Proposition \ref{action map}.  The image of $a$ is exactly
\bd
\Higgst_{\on{fixed}}\cap \Higgst_{\on{reg}}.
\ed
\end{corollary}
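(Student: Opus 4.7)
The plan is to verify the hypothesis of Proposition \ref{action map} for $(P_0,\theta_0)$ and then to establish two inclusions describing the image of $a$ on $\mathbb{C}$-points. The existence of the diagram is immediate from what has been proved: Lemma \ref{stability of P_0} gives stability of $(P_0,\theta_0)$, and by construction $\theta_0$ is fibrewise regular nilpotent (since it arises from the regular nilpotent $e\in\mathfrak{g}$), so Proposition \ref{action map} applies and produces $a=a_{(P_0,\theta_0)}$.

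For containment of the image of $i\circ a$ in $\Higgst_{\on{fixed}}\cap\Higgst_{\on{reg}}$, I would note that the composition $C\to\mathfrak{g}/(G\times\Gm)$ associated to the Higgs field is unchanged by tensoring with a $Z$-bundle, so regularity is preserved; thus each $(P_Z\times_Z P_0,\theta_0)$ is regular. The $1$-parameter subgroup $\lambda\in\AAut_G(P_0)$ with $\lambda(t)^*\theta_0=t^2\theta_0$ extends to $\AAut_G(P_Z\times_Z P_0)$ by acting trivially on $P_Z$ and still rescales $\theta_0$ with weight $2$, so by Lemma \ref{cot fixed points description}, $(P_Z\times_Z P_0,\theta_0)$ lies in $\Higgst_{\on{fixed}}$.

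For the reverse inclusion, given an arbitrary $(P,\theta)\in\Higgst_{\on{fixed}}\cap\Higgst_{\on{reg}}$, Lemma \ref{cot fixed points description} provides a $1$-parameter subgroup $\lambda'\subset\AAut_G(P)$ with $\lambda'(t)^*\theta=t^{m'}\theta$ for some $m'>0$. I would first argue that $\theta$ is in fact regular nilpotent: the Hitchin map is $\Gm$-equivariant for the positive-weight scaling on the Hitchin base, whose only $\Gm$-fixed point is the origin, so the image of $(P,\theta)$ in the Hitchin base vanishes and $\theta$ is nilpotent; combined with regularity this gives regular nilpotent. Passing to appropriate powers of $\lambda$ and $\lambda'$ so that the rescaling weights on $\theta_0$ and $\theta$ agree, Lemma \ref{fixed points under Bun(Z,C)} produces a $Z$-torsor $P_Z$ and an isomorphism $P_Z\times_Z P_0\cong P$ identifying the Higgs fields, which exhibits $(P,\theta)$ as a point of the image of $i\circ a$.

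The main obstacle I expect is the regular-fixed-implies-nilpotent step, which invokes $\Gm$-equivariance of the Hitchin fibration and the fact that the Hitchin base has the origin as its only $\Gm$-fixed point; once this is in hand, the remainder is a direct application of Proposition \ref{action map} together with Lemmas \ref{cot fixed points description} and \ref{fixed points under Bun(Z,C)}.
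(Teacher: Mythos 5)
Your proof is correct and follows essentially the same route as the paper, which derives the corollary in one line from Lemma \ref{fixed points under Bun(Z,C)} together with Lemma \ref{stability of P_0} and the construction of $(P_0,\theta_0,\lambda)$. The one step you make explicit that the paper only asserts parenthetically (in the statement of Lemma \ref{fixed points under Bun(Z,C)}) is that a regular $\Gm$-fixed point has fibrewise \emph{nilpotent} Higgs field; your argument via $\Gm$-equivariance of the Hitchin map (equivalently, vanishing of all invariant polynomials of $\theta$ since it is conjugate to $t^m\theta$ for all $t$) is exactly the intended justification.
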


\subsection{Proof of Theorem \ref{main thm}}
That $H^*\big(\Bun(Z,C)\big)$ acts trivially on $H^*\big(\Bun(G,C)\big)$ is clear from the complex-analytic construction of $\Bun(G,C)$ which realizes the stack as a quotient of an (infinite-dimensional) affine space by a Lie group (again infinite-dimensional): the space $\underline{\Bun}(Z,C)$ acts on the affine space of $\overline{\partial}$-operators, hence the action is homotopically trivial.  Thus, assertions (3) and (4) of the theorem are immediate from assertion (1).

From now on we fix the pair $(P_0,\theta_0)$ of the Main Construction.  By Corollary \ref{crucial corollary}, the image of the corresponding morphism $\Bun(Z,C)\rightarrow \Higgst$ is $\Higgst_{\on{fixed}}\cap\Higgst_{\on{reg}}$.  The latter set is both open in 
$\Higgst_{\on{fixed}}$---because it is the intersection with the open subset of regular $G$-Higgs pairs---and closed, because it is the image of a proper morphism.  Thus, the image of $\Bun(Z,C)\rightarrow \Higgst$ consists of a union of connected components of $\Higgst_{\on{fixed}}$.  The morphism $\Bun(Z,C)\rightarrow\Higgst$ being a closed immersion by Proposition \ref{action map}, we conclude:
\begin{fact}\label{homeomorphism fact}
The morphism $\Bun(Z,C)\rightarrow\Higgst$ determined by $(P_0,\theta_0)$  is a homeomorphism from $\Bun(Z,C)$ onto a union of connected components of $\Higgst_{\on{fixed}}$.  
\end{fact}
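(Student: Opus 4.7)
The plan is to combine the closed-immersion statement of Proposition \ref{action map} with the identification of the image from Corollary \ref{crucial corollary}. Two things need to be verified: (a) the image is a union of connected components of $\Higgst_{\on{fixed}}$, and (b) the morphism is a homeomorphism onto that image.

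For (a), I would first invoke Corollary \ref{crucial corollary} to identify the set-theoretic image of $a : \Bun(Z,C) \to \Higgst$ with $\Higgst_{\on{fixed}} \cap \Higgst_{\on{reg}}$, and then show that this subset is both open and closed in $\Higgst_{\on{fixed}}$. Openness follows immediately from Lemma \ref{regular locus}, since $\Higgst_{\on{reg}}$ is an open substack of $\Higgst$ and openness is preserved upon intersecting with the closed substack $\Higgst_{\on{fixed}}$. Closedness is where Proposition \ref{action map}(3) does the work: $a$ is a closed immersion, so its image is closed in $\Higgst$, and therefore also closed in $\Higgst_{\on{fixed}}$. Any clopen subset of a topological space is a union of connected components, so the image is a union of connected components of $\Higgst_{\on{fixed}}$.

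For (b), since $a$ is a closed immersion of Artin stacks (Proposition \ref{action map}(3) again), it identifies $\Bun(Z,C)$ with a closed substack of $\Higgst$; passing to underlying topological spaces, this yields a homeomorphism onto the image. Combined with (a), this gives the asserted homeomorphism from $\Bun(Z,C)$ onto a union of connected components of $\Higgst_{\on{fixed}}$.

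I do not foresee a substantive obstacle: all of the real work has already been carried out in Proposition \ref{action map} (where injectivity of tangents, properness of components of $\underline{\Bun}(Z,C)$, and the simplicity of $(P_0,\theta_0)$ combine to prove $a$ is a closed immersion) and in Corollary \ref{crucial corollary} (which pins down the image using the fixed-point analysis of Lemma \ref{fixed points under Bun(Z,C)}). The only care needed here is conceptual: one must be comfortable reading ``homeomorphism onto a union of connected components'' at the level of the underlying topological spaces of the Artin stacks in question, but this is standard and causes no genuine difficulty.
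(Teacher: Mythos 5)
Your proposal is correct and follows essentially the same route as the paper: identify the image as $\Higgst_{\on{fixed}}\cap\Higgst_{\on{reg}}$ via Corollary \ref{crucial corollary}, observe it is open (intersection with the open regular locus) and closed (image of a closed immersion/proper morphism from Proposition \ref{action map}) in $\Higgst_{\on{fixed}}$, hence a union of connected components, and then use the closed-immersion property to get the homeomorphism. No substantive difference from the paper's argument.
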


The finite abelian group stack $\Bun(\pi_0(Z),C)$---which is nontrivial by our main assumption that $\pi_0(Z)$ is nontrivial---acts on $\Higgst$, inducing compatible actions on $H^*\big(\Higgst\big)$ and $H^*(\Higgst_{\on{fixed}})$.  It follows from Proposition \ref{action on connected components} that $\Bun(\pi_0(Z),C)$ preserves connected components of $\Bun(G,C)$, hence by \cite{GO} preserves connected components of $\Higgst$.

Consider the Leray spectral sequence for $\Gm$-equivariant cohomology,
\bd
E_2^{\ast,\ast} = H^*(\MHiggs,\mathbb{Q})\otimes H^*_{\Gm}(\on{pt},\mathbb{Q}) \implies H^*_{\Gm}(\MHiggs,\mathbb{Q}).
\ed
Writing 
$H^*_{\Gm}(\on{pt},\mathbb{Q}) \cong \mathbb{Q}[u]$ and localizing to $\mathbb{Q}[u^{\pm 1}]$,  the spectral sequence becomes
\bd
E_2^{\ast,\ast} = H^*(\MHiggs,\mathbb{Q})\otimes H^*_{\Gm}(\on{pt},\mathbb{Q})[u^{\pm 1}] \implies H^*_{\Gm}\big(\MHiggs,\mathbb{Q}\big)[u^{\pm 1}].
\ed
\begin{remark}\label{MHS remark 1}
We note that the mixed Hodge structure on $H^*_{\Gm}(\on{pt},\mathbb{Q})[u^{\pm 1}]$ is pure; thus only the pure part of $H^*(\MHiggs,\mathbb{Q})$ contributes to the pure part of 
$H^*_{\Gm}\big(\MHiggs,\mathbb{Q}\big)[u^{\pm 1}]$.
\end{remark}
Applying Proposition \ref{localization theorem} and Proposition \ref{fixed points and localization}, the spectral sequence abuts to $H^*_{\Gm}(\Higgst_{\on{fixed}},\mathbb{Q})[u\inv]$.  
The spectral sequence is equivariant with respect to all $\Gm$-compatible automorphisms of $\MHiggs$: in particular, with respect to the  $\Bun(\pi_0(Z),C)$-action.  
\begin{lemma}\label{reg rep claim}
 The graded representation $H^*_{\Gm}(\Higgst_{\on{fixed}},\mathbb{Q})[u\inv]$ of $\Bun(\pi_0(Z),C)$ contains a copy of $\mathbb{Q}[\Bun(\pi_0(Z),C)]$ lying in graded degree $0$ (and consisting of pure classes for the mixed Hodge structure).
\end{lemma}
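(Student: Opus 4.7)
The plan is to use Fact \ref{homeomorphism fact} to exhibit the desired classes as fundamental classes of connected components of $\Higgst_{\on{fixed}}$, and then lift them to equivariant cohomology using a standard spectral-sequence degeneration.

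Write $\mathcal{Y}$ for the image of the map $a$ from Proposition \ref{action map} applied to $(P_0,\theta_0)$. By Fact \ref{homeomorphism fact}, $\mathcal{Y}$ is a union of connected components of $\Higgst_{\on{fixed}}$, homeomorphic to $\Bun(Z,C)$ via $a$. Since $a$ is $\Bun(Z,C)$-equivariant, $\mathcal{Y}$ is preserved by $\Bun(\pi_0(Z),C) \subseteq \Bun(Z,C)$, and the induced action on $\mathcal{Y}$ corresponds via the homeomorphism to the translation action of $\Bun(\pi_0(Z),C)$ on $\Bun(Z,C)$ (through the splitting $Z \cong Z^\circ\times\pi_0(Z)$).

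I would then analyze this action on $H^0(\mathcal{Y},\mathbb{Q}) = \mathbb{Q}^{\pi_0(\mathcal{Y})}$. Combining \eqref{ses of groups} and Proposition \ref{properness of BunZ},
\[
\pi_0(\Bun(Z,C)) \cong \pi_0(\underline{\Bun}(Z^\circ,C)) \times H^1(C,\pi_0(Z)),
\]
and the translation action of $\Bun(\pi_0(Z),C)$---which, per the footnote in Section~1.2, factors on cohomology through the finite group $H^1(C,\pi_0(Z))$---is trivial on the first factor and is free transitive translation on the second. Fixing any $\alpha \in \pi_0(\underline{\Bun}(Z^\circ,C))$, the fundamental classes of the components indexed by $\{\alpha\}\times H^1(C,\pi_0(Z))$ then span a copy of the regular representation $\mathbb{Q}[\Bun(\pi_0(Z),C)]$ inside $H^0(\mathcal{Y},\mathbb{Q}) \subseteq H^0(\Higgst_{\on{fixed}},\mathbb{Q})$.

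Finally, I would lift these classes to $H^*_{\Gm}(\Higgst_{\on{fixed}},\mathbb{Q})[u^{-1}]$ via the $\Bun(\pi_0(Z),C)$-equivariant Leray spectral sequence with $E_2^{p,q} = H^p(\Higgst_{\on{fixed}}) \otimes H^q_{\Gm}(\on{pt}) \Rightarrow H^{p+q}_{\Gm}(\Higgst_{\on{fixed}})$. Since $H^q_{\Gm}(\on{pt})$ vanishes for $q<0$ and for $q$ odd, no differential of the spectral sequence enters or exits $E_r^{0,0}$ for $r \ge 2$, so $H^0(\Higgst_{\on{fixed}},\mathbb{Q})$ injects equivariantly into $H^0_{\Gm}(\Higgst_{\on{fixed}},\mathbb{Q})$; after inverting $u$, the image sits in graded degree $0$ of $H^*_{\Gm}(\Higgst_{\on{fixed}},\mathbb{Q})[u^{-1}]$. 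Purity (weight $0$) is automatic since these are fundamental classes of connected components, consistent with Remark \ref{MHS remark 1}. The main (minor) obstacle is simply verifying that the identification of the $\Bun(\pi_0(Z),C)$-action on $\pi_0(\mathcal{Y})$ really matches translation on the $H^1(C,\pi_0(Z))$-factor; once this is settled, locating the regular representation in $H^0$ and lifting it to equivariant cohomology is forced by the shape of $H^*_{\Gm}(\on{pt})$.
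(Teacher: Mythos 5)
Your argument is essentially the paper's: both proofs extract the regular representation from the free translation action of $\Bun(\pi_0(Z),C)$ on $\pi_0\Bun(Z,C)$, transport it into $H^0(\Higgst_{\on{fixed}})$ via Fact \ref{homeomorphism fact}, and then pass to $\Gm$-equivariant cohomology. The only point you leave implicit is that your classes must \emph{survive} inverting $u$ (injectivity of $H^0(\Higgst_{\on{fixed}})\hookrightarrow H^0_{\Gm}(\Higgst_{\on{fixed}})$ alone does not rule out $u$-torsion); this is immediate once you note, as the paper does, that on the summand $\mathcal{Y}\cong\Bun(Z,C)$ the $\Gm$-action is trivial, so $H^*_{\Gm}(\mathcal{Y})\cong H^*(\Bun(Z,C))\otimes H^*_{\Gm}(\on{pt})$ is a free $\mathbb{Q}[u]$-module and localization is injective there.
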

\begin{proof}[Proof of lemma]
The group $\Bun(\pi_0(Z),C)$ acts freely on $\pi_0\Bun(Z,C)$; hence $H^*(\Bun(Z,C),\mathbb{Q})$ contains a copy of the regular representation of $\Bun(\pi_0(Z),C)$ in degree $0$, implying that 
\bd
H^*_{\Gm}(\Bun(Z,C),\mathbb{Q}) = H^*(\Bun(Z,C),\mathbb{Q})\otimes H^*_{\Gm}(\on{pt})
\ed does as well (here $\Gm$ acts trivially on $\Bun(Z,C)$);
hence by Fact \ref{homeomorphism fact}, the same assertion is true of $H^*_{\Gm}(\Higgst_{\on{fixed}},\mathbb{Q})[u\inv]$.  
\end{proof}
We continue with the proof of the theorem.  Since $H^*_{\Gm}(\Higgst_{\on{fixed}},\mathbb{Q})[u\inv]$ can be obtained by taking iterated $\Bun(\pi_0(Z),C)$-equivariant subquotients of  $H^*(\Higgst,\mathbb{Q})\otimes H^*_{\Gm}(\on{pt},\mathbb{Q})[u^{\pm 1}]$, it then follows that the latter contains a copy of $\mathbb{Q}[\Bun(\pi_0(Z),C)]$ lying in graded degree $0$.  
Finally, note that for any graded vector space $V^*$, the natural map 
$V^* \rightarrow V^*[u^{\pm 1}]_0$ acting by multiplication by $u^{-k}$ on $V^k$, is an isomorphism. 
It then follows from this observation and Claim \ref{reg rep claim} that  $H^*(\Higgst,\mathbb{Q})$ contains a copy of 
$\mathbb{Q}[\Bun(\pi_0(Z),C)]$, as desired.

It remains to prove a similar assertion for the moduli space $\Higgsp$.  
It follows from Proposition \ref{action map}, Proposition \ref{Higgs basics}(3), Lemma \ref{fixed-and-projection}(2), and Fact \ref{homeomorphism fact} that $\underline{\Bun}(Z,C)\rightarrow \Higgsp^{\Gm}$ is a homeomorphism onto a union of connected components of $\Higgsp^{\Gm}$.
  Since $\pi_0\left(\Bun(Z,C)\right) = \pi_0\left(\underline{\Bun}(Z,C)\right)$, we conclude
  that $\Bun(\pi_0(Z),C)$ again acts nontrivially on $H^*(\Higgsp,\mathbb{Q})$.  Further, it follows
  from the functoriality of the Leray spectral sequence that the homomorphism 
$H^*(\Higgsp,\mathbb{Q})\rightarrow H^*(\Higgst,\mathbb{Q})$ must take some nontrivial $\Bun(\pi_0(Z),C)$-isotypic components of the domain (corresponding to part of $H^0(\Higgsp^{\Gm})$) isomorphically onto their images.  This completes the proof of assertion (1).

The intersection cohomology assertions (2) follow from the same proof applied to $IH^*$ rather than $H^*$: the crucial observation is that the union of connected components given by the image of $\Bun(Z,C)$ lies in the rationally smooth locus of $\Higgst$, where the colimit appearing in the target of Proposition \ref{IH localization} reduces to the singular cohomology of the (smooth) fixed stack $(\Higgst)_{\on{fixed}}$.  This completes the proof.
\hfill\qedsymbol
\begin{remark}\label{MHS remark 2}
In light of Remark \ref{MHS remark 1}, the proof shows that the representation $\mathbb{Q}[\Bun(\pi_0(Z),C)]$ appears in the pure part of the mixed Hodge structures on $H^*(\Higgst,\mathbb{Q})$ and $H^*(\Higgsp,\mathbb{Q})$.
\end{remark}


\begin{thebibliography}{McGN3}
\bibitem[Alp]{Alper} J. Alper, Good moduli spaces for Artin stacks, {\em Annales de L'Institut Fourier} {\bf 63} (2013), no. 6, 2349--2402.

\bibitem[Bry]{Brylinski} J.-L. Brylinski, Equivariant intersection cohomology, in {\em Kazhdan-Lusztig theory and related topics (Chicago, IL, 1989)}, 5--32, {\em Contemp. Math.} {\bf 139}, Amer. Math. Soc., Providence, RI, 1992.


\bibitem[CG]{CG} N. Chriss and V. Ginzburg, \textit{Representation theory and Complex Geometry}, 
Birkh\"auser, Boston.


\bibitem[DW$^3$]{DWWW} G. Daskalopoulos, J. Weitsman, R. Wentworth, and G. Wilkin, Morse theory and hyperk\"ahler Kirwan surjectivity for Higgs bundles, {\em J. Diff. Geom.} {\bf 87} (2011), 81--115.

\bibitem[Fal]{Faltings} G. Faltings, Stable $G$-bundles and projective connections, {\em J. Alg. Geom.} {\bf 2} (1993), no. 3, 507--568.



\bibitem[GO]{GO} O. Garc\'ia-Prada and A. Oliveira, Connectedness of Higgs bundle moduli for complex reductive Lie groups, {\tt arXiv:1408.4778v3}.

\bibitem[GRR]{GRR} A. Garc\'ia-Raboso and S. Rayan, Introduction to nonabelian Hodge theory: flat connections, Higgs bundles and complex variations of Hodge structure, in {\em Calabi-Yau varieties: arithmetic, geometry and physics}, 131--171, {\em Fields Inst. Monogr.} {\bf 34}, 2015.

\bibitem[GP]{GP} T. Graber and R. Pandharipande, Localization of virtual classes, {\em Invent. Math.} {\bf 135} (1999), no. 2, 487--518.



\bibitem[Ha]{Hausel}  T. Hausel, Global Topology of the Hitchin system, in {\em Handbook of Moduli II,} ed. Gavril Farkas
and Ian Morrison, International Press, 2013.



\bibitem[HT]{HT} T. Hausel and M. Thaddeus, Mirror symmetry, Langlands duality, and the Hitchin system, {\em Invent. Math.} {\bf 153} (2003), no. 1, 197--229.

\bibitem[Hi]{Hitchin} N. Hitchin, The self-duality equations on a Riemann surface, {\em Proc. London Math. Soc.} {\bf 55} (1987), no. 3, 59--126.

\bibitem[Hi2]{Hitchin2} N. Hitchin, Lie groups and Teichm\"uller space, {\em Topology} {\bf 31} (1992), no. 3, 449--473.

\bibitem[Ho]{Hoffmann} N. Hoffmann, On moduli stacks of $G$-bundles over a curve, {\em Affine flag manifolds and principal bundles}, 155--163, Trends. Math., Birkh\"auser, Basel, 2010.

\bibitem[Hu]{Humphreys} J. Humphreys, {\em Conjugacy classes in semisimple algebraic groups}, {\em Mathematical Surveys and Monographs} {\bf 43}, Amer. Math. Soc., Providence, 1995.

\bibitem[JKK]{JKK} L. Jeffrey, Y.-H. Kiem, and F. Kirwan, On the cohomology of hyperk\"ahler quotients, {\em Transform. Groups} {\bf 14} (2009), no. 4, 801--823.



\bibitem[Ma]{Markman} E. Markman, Generators of the cohomology ring of moduli spaces of sheaves on symplectic surfaces, {\em Journal f\"ur die reine und angew. Math.} {\bf 544} (2002), 61--82.

\bibitem[McGN1]{McN-HKK} K. McGerty and T. Nevins, Kirwan surjectivity for quiver varieties, {\em Invent. Math.} (2017), {\tt https://doi.org/10.1007/s00222-017-0765-x}.

\bibitem[McGN2]{McNkirwan} K. McGerty and T. Nevins, Counterexamples to hyperk\"ahler Kirwan surjectivity, in preparation.

\bibitem[Na]{Nakajima} H. Nakajima, {\em Lectures on Hilbert schemes of points on surfaces}, University Lecture Series {\bf 18}, Amer. Math. Soc., Providence, RI, 1999.

\bibitem[Q]{Quillen} D. Quillen, The spectrum of an equivariant cohomology ring: I, {\em Ann. of Math. (2)} {\bf 94} (1971), no. 3, 549--572.

\bibitem[R]{Romagny} M. Romagny, Group actions on stacks and applications, {\em Michigan Math. J.} {\bf 53} (2005), no. 1, 209--236.



\bibitem[Spr]{Springer} T. Springer, Some arithmetical results on semi-simple Lie algebras, {\em Publ. Math. IHES} {\bf 30} (1966), 115--141.

\bibitem[St]{StacksProject} The Stacks Project, {\tt https://stacks.math.columbia.edu}.




\end{thebibliography}
\end{document}